\def\url@leostyle{%
  \@ifundefined{selectfont}{\def\UrlFont{\sf}}{\def\UrlFont{\scriptsize\ttfamily}}}
\newtheorem{theorem}{Theorem}[section]
\newtheorem{slicethm}[theorem]{Slice Theorem}
\newtheorem{soulthm}[theorem]{Soul Theorem}
\newtheorem{GBonnetMyers}[theorem]{Generalized Bonnet-Myers Theorem}
\newtheorem{GSTheorem}[theorem]{Generalized Synge's Theorem}
\newtheorem{GSlemma}[theorem]{Generalized Synge's Lemma}
\newtheorem{Stability}[theorem]{Stability Theorem}
\newtheorem*{thma}{Theorem A}
\newtheorem*{thmb}{Theorem B}
\newtheorem*{thmc}{Theorem C}
\newtheorem*{thmd}{Theorem D}
\newtheorem*{CorE}{Corollary E}
\newtheorem*{MSR}{Maximal Symmetry Rank Theorem}
\newtheorem*{fphthm}{Fixed-Point Homogeneity Theorem}
\newtheorem*{cht}{Covering Homotopy Theorem}
\newtheorem{corollary}[theorem]{Corollary} 
\newtheorem{lemma}[theorem]{Lemma} 
\newtheorem{proposition}[theorem]{Proposition} 
\newtheorem{example}[theorem]{Example}
\def\symrank{\textrm{symrank}}
\def\rrr{\mathbb{R}}
\def\ccc{\mathbb{C}}
\def\zzz{\mathbb{Z}}
\def\hh{\mathbb{H}}
\def\blowX{X^{!}_p}
\def\GblowX{X^{!}_{G(p)}}
\DeclareMathOperator{\Isom}{Isom}
\DeclareMathOperator{\codim}{codim}
\DeclareMathOperator{\Fix}{Fix}
\def\bdm{\begin{displaymath}}
\def\edm{\end{displaymath}}
\def\normal{\vartriangleleft}
\def\dist{\textrm{dist}}
\def\gexp{\textrm{gexp}}
\def\rk{\textrm{rk}}
\def\txt{\textrm}
\def\d{\partial}
\def\dim{\textrm{dim}}
\def\cl{\textrm{cl}}
\def\ra{\rightarrow}
\DeclareMathOperator{\Cohomfix}{cohomfix}
\DeclareMathOperator{\curv}{curv}
\theoremstyle{definition}
\newtheorem{definition}[theorem]{Definition} 
\newtheorem*{acknowledge}{Acknowledgements}
\numberwithin{equation}{section}
\begin{document}
\newcommand{\comment}[1]{\vspace{5 mm}\par \noindent
\marginpar{\textsc{Note}}
\framebox{\begin{minipage}[c]{0.95 \textwidth}
#1 \end{minipage}}\vspace{5 mm}\par}

\title[Orientation and symmetries of Alexandrov spaces]{Orientation and symmetries of Alexandrov spaces with applications in positive curvature}

\author[Harvey]{John Harvey}
\address{Department of Mathematics, University of Notre Dame, Notre Dame, Ind. 46556, U.S.A.}
\email{jharvey2@nd.edu}

\author[Searle]{Catherine Searle}
\address{Oregon State University, Department of Mathematics, 368 Kidder Hall, Corvallis, Oregon, 97331, U.S.A.}
\email{searle.catherine@gmail.com}

\subjclass[2010]{Primary: 53C23; Secondary: 53C20, 51K10} 

\thanks{The first-named author was supported in part by a grant from the U.S. National Science Foundation. The second-named author was supported in part by 
CONACYT Project \#SEP-CO1-46274 and CONACYT Project \#SEP-82471.}

\date{\today}

\begin{abstract}
We develop two new tools for use in Alexandrov geometry: a theory of ramified orientable double covers and  a particularly useful version of the Slice Theorem for actions of compact Lie groups. These tools are applied to the classification of compact, positively curved Alexandrov spaces with maximal symmetry rank.
\end{abstract}
\maketitle

\section*{Introduction}
In the study of geometry, the Riemannian manifold has long been a subject of intense investigation.
The smooth structure allows for many attractive and useful tools. However, 
geometry is 
essentially the study of distances and angles on a space, and so the metric space is clearly the natural home of the subject.  
In order to understand the impact of curvature bounds we must restrict our attention further: to so-called CAT($\kappa$) spaces for upper curvature bounds and to Alexandrov spaces for lower bounds.

By choosing to study Alexandrov spaces one foregoes techniques which the Riemannian geometer takes for granted, such as the existence of convexity and injectivity radii, extendibility of geodesics and isotopies via vector fields, but in return the class of spaces is much richer. The creation of new tools with which to work in these spaces can involve a lot of technical work. We present here two adaptations of completely fundamental concepts from Riemannian geometry: the theory of orientable double covers and the 
Slice Theorem.

The applications of this paper show that once the necessary tools have been made available, they can be used to develop simple and appealing proofs. In this paper, we provide proofs of the counterparts of two classification theorems from Riemannian geometry which are perhaps simpler than the original proofs (admittedly, this is, in part, because we cannot aim for a diffeomorphism classification) and which cast a new light on the old results. 

It is well known that every Riemannian manifold $M$ has an orientable double cover $\tilde{M}$, and that there is a free orientation-reversing isometric involution $i:\tilde{M} \ra \tilde{M}$ such that $M$ is isometric to $\tilde{M} / i$.
One of the useful aspects of the class of Alexandrov spaces is that it is closed under taking quotients by isometric group actions even when those actions are not free. Therefore it is natural to 
consider involutions that fix points. We obtain the following result.

\begin{thma}Let $X$ be an Alexandrov space of dimension $n$ and $\curv \geq k$ which is non-orientable. Then there is an orientable Alexandrov space $\tilde{X}_{\txt{Ram}}$ with the same dimension and lower curvature bound, and with an isometric involution $i$  such that $\tilde{X}_{\txt{Ram}} / i$
and
$X$ are isometric. $\tilde{X}_{\txt{Ram}}$ is a ramified orientable double cover of $X$, and the ramification locus is the union of those strata in $X$ having non-orientable normal cones.\end{thma}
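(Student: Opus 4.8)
The plan is to build $\tilde X_{\txt{Ram}}$ as the metric completion of the ordinary orientation double cover of the top-dimensional stratum of $X$, and then to read off the ramification behaviour stratum by stratum from the geometry of the normal cones. Write $X^{\circ}$ for the top stratum, the open dense set of interior manifold points, which is a connected topological $n$-manifold, and take $X$ to be orientable precisely when $X^{\circ}$ is. Since $X$ is non-orientable, $X^{\circ}$ is non-orientable, so its orientation double cover $\pi^{\circ}\colon \tilde X^{\circ}\ra X^{\circ}$ is connected and carries a free, orientation-reversing, isometric deck involution $i^{\circ}$, where $\tilde X^{\circ}$ is given the length metric that makes $\pi^{\circ}$ a local isometry. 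Over $X^{\circ}$ every assertion of the theorem is then immediate, and all the work lies in the singular strata.

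To fill these in, let $\tilde X_{\txt{Ram}}$ be the metric completion of $\tilde X^{\circ}$ and let $\pi$ and $i$ be the unique $1$-Lipschitz extensions of $\pi^{\circ}$ and $i^{\circ}$; since $X$ is complete and $\pi^{\circ}$ is a local isometry onto the dense subset $X^{\circ}\subset X$, the map $\pi$ is well defined and its fibres over singular points are finite. The number of points lying over a point $p$ of a stratum $S$ is governed by a local model: the tangent cone at $p$ splits as $\rrr^{m}\times C(\Sigma^{\perp}_{p})$ with $m=\dim S$ and $C(\Sigma^{\perp}_{p})$ the normal cone, and a neighbourhood of $p$ is homeomorphic to this cone. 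The orientation double cover over the punctured neighbourhood is accordingly $\rrr^{m}\times\big(C(\tilde\Sigma^{\perp}_{p})\setminus\{\txt{apex}\}\big)$, where $\tilde\Sigma^{\perp}_{p}$ is the orientation double cover of the normal link. Its completion restores the apex once when $\tilde\Sigma^{\perp}_{p}$ is connected, i.e. when $\Sigma^{\perp}_{p}$, and hence the normal cone, is non-orientable, and restores two apices otherwise. Thus $\pi$ is two-to-one over strata with orientable normal cones and one-to-one (a genuine branch point) over strata with non-orientable normal cones, which identifies the ramification locus as stated; correspondingly $i$ swaps the two points over a split stratum and fixes the single point over a ramified one, so $\Fix(i)$ is exactly the ramification locus and $\pi$ descends to a bijective isometry $\tilde X_{\txt{Ram}}/i\ra X$. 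Since the manifold part of $\tilde X_{\txt{Ram}}$ is exactly $\tilde X^{\circ}$, which is orientable by construction, $\tilde X_{\txt{Ram}}$ is orientable.

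It remains to prove that $\tilde X_{\txt{Ram}}$ is an Alexandrov space of dimension $n$ with $\curv\ge k$. The dimension is $n$ because $\tilde X^{\circ}$ is a dense $n$-manifold in $\tilde X_{\txt{Ram}}$. As $\tilde X_{\txt{Ram}}$ is a complete, locally compact length space, Toponogov globalization reduces the curvature bound to the local quadruple comparison condition near every point. Away from the ramification locus $\pi$ is a local isometry, so the bound is inherited directly from $X$. At a ramification point $\tilde p$ the map is genuinely branched, so I would argue by induction on dimension: applying the theorem one dimension down to the normal link $\Sigma^{\perp}_{p}$, itself an Alexandrov space with $\curv\ge 1$ of dimension $n-m-1$, its orientation double cover $\tilde\Sigma^{\perp}_{p}$ is again $\curv\ge1$, so the tangent cone $\rrr^{m}\times C(\tilde\Sigma^{\perp}_{p})$ at $\tilde p$ has $\curv\ge0$. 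Because the quadruple comparison is a closed condition on the mutual distances of four points, it then suffices to verify it for quadruples drawn from the dense set $\tilde X^{\circ}$ and to pass to the limit as one point tends to $\tilde p$.

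The main obstacle is precisely this last verification. At the branch points the cover is not a local isometry, the cover distances between points on different sheets are not merely lifted from $X$, and one must show that the comparison inequality survives the identification of the sheets at the apex. The cone map $C(\tilde\Sigma^{\perp}_{p})\ra C(\Sigma^{\perp}_{p})$ exhibits the phenomenon cleanly — the cone over a $\curv\ge1$ double cover is $\curv\ge0$ — and the heart of the argument is to upgrade this infinitesimal picture to genuine comparison in honest neighbourhoods, using the inductive control of the spaces of directions together with the stability of non-strict distance comparison under completion. Once this is established, the remaining assertions follow formally.
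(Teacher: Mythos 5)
Your construction of $\tilde X_{\txt{Ram}}$ as the metric completion of the orientation double cover of the manifold part, and your identification of the fibres, the involution, and the ramification locus, all match the paper's set-up (the paper performs the topological step by gluing $X\setminus X^{(n)}$ to the double cover $\tilde X^{(n)}$ with the pull-back topology, which amounts to the same thing). The difficulty is that the one assertion carrying all of the analytic content --- that the completed cover actually satisfies $\curv\ge k$ --- is exactly the step you defer. Your own closing paragraph names it as ``the main obstacle'' and ``the heart of the argument'' and then stops, so as written this is a gap, not a proof.

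Moreover, the reduction you propose does not close it. Knowing that the tangent cone at a branch point $\tilde p$ is $\rrr^{m}\times C(\tilde\Sigma^{\perp}_{p})$ with $\curv\ge 0$ is only infinitesimal information: every finite-dimensional Alexandrov space has tangent cones of this form, and the converse fails, so the induction on the link yields a necessary condition rather than the local comparison. Likewise, checking quadruples drawn from the dense set $\tilde X^{\circ}$ and passing to the limit does not reduce matters to the local-isometry case: for four points of $\tilde X^{\circ}$ lying near the branch locus on different sheets, the mutual distances in the cover are realized by paths that cross or skirt the ramification locus and are not simply lifted from $X$, so no evenly-covered neighbourhood contains such a quadruple and density buys you nothing. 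The paper's proof consists precisely of supplying this missing step by adapting the three-step argument of Grove--Wilking: first, Toponogov comparison is established for hinges based at points of the ramification locus $L$ (using Petrunin's refined gradient exponential maps, with the spherical or hyperbolic law of cosines when $k\ne 0$); second, for each $q\notin L$ the shadow of $L$ --- the set of points reachable from $q$ only by geodesics passing through $L$ --- is shown to have measure zero; third, the function $\rho_k\circ\dist(q,\cdot)$ is shown to satisfy the required concavity condition off this measure-zero set, which suffices. Some version of this argument (or of Petrunin's globalization results for non-complete spaces, which the paper also cites) is indispensable; without it the theorem is not proved.
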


When studying the action of a compact Lie group on a topological space, the Slice Theorem is a crucial component of the theory. It is clear that the theorem is true in Alexandrov geometry, simply because Alexandrov spaces are completely regular topological spaces \cite{MY57}. However, in Riemannian geometry we can go further, identifying the slice with the normal space to the orbit. We show that an analogous identification is possible in Alexandrov geometry. 

\begin{thmb} Let $G$, a compact Lie group, act isometrically on an Alexandrov space $X$. Then for all $p \in X$, there is some $r_0 > 0$ such that for all $r < r_0$ there is an equivariant homeomorphism $\Phi : G \times_{G_p} K\nu_p \rightarrow B_r(G(p))$, where $K\nu_p$ is the cone on the space of normal directions to the orbit $G(p)$.\end{thmb}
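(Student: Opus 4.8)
The plan is to imitate the Riemannian proof of the Slice Theorem, in which a tubular neighborhood of the orbit is identified, via the normal exponential map, with the normal bundle $G \times_{G_p} \nu_p$. Here the role of the linear normal space is played by the cone $K\nu_p$ on the space of normal directions, and the role of the normal exponential map by the gradient exponential map $\gexp_p$. The proof therefore splits into an infinitesimal (single-slice) part and a global (tubular neighborhood) part, and I expect all the genuine difficulty to lie in the latter.

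First I would set up the infinitesimal picture at $p$. The orbit $G(p)$ is the smooth homogeneous space $G/G_p$ embedded in $X$, so the directions at $p$ tangent to the orbit form a round sphere $S^{k-1}$, where $k = \dim G(p)$. Because the $G$-action is isometric, the first variation formula forces every orbit-tangent direction to meet every normal direction at angle $\pi/2$, and hence the tangent cone splits isometrically as $T_pX \cong \rrr^k \times K\nu_p$ with the Euclidean factor tangent to the orbit; equivalently, $\Sigma_p \cong S^{k-1} * \nu_p$ as a $G_p$-equivariant spherical join. Taking cones and using $K(A * B) \cong KA \times KB$ together with $KS^{k-1} \cong \rrr^k$ already shows that a small ball $B_r(p)$ is homeomorphic to $\rrr^k \times K\nu_p$, which is the local form of the statement. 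Using Perelman's conical neighborhood theorem as realized by the gradient exponential map, $\gexp_p$ restricts to a homeomorphism from the truncated cone $K^r\nu_p$ onto a slice $S_p$, which may be described intrinsically as the set of points near $p$ whose nearest point on $G(p)$ is $p$; by construction $S_p$ is a $G_p$-invariant transversal to the orbit.

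Next I would build the tube map. Let $\nu_p \subset \Sigma_p$ carry its $G_p$-action and define $\Phi : G \times_{G_p} K^r\nu_p \to X$ by $\Phi([g, w]) = g \cdot \gexp_p(w)$. Equivariance of $\gexp$ under the isometric $G$-action makes $\Phi$ well defined on the associated bundle and $G$-equivariant, with image $G \cdot S_p$. It then remains to show that for a uniformly small $r$ the map $\Phi$ is a homeomorphism onto $B_r(G(p))$. Surjectivity follows once every point sufficiently close to the orbit is joined to $G(p)$ by a minimal gradient curve of $\dist_{G(p)}$ meeting it normally, so that it lies in some $g S_p$. Injectivity follows once each point of the tube has a unique nearest orbit point together with a unique normal direction realizing that distance, which pins down the pair $([g], w)$. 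Continuity is immediate, and since the (closed) truncated cone bundle is compact while $\Phi$ is a continuous bijection onto a Hausdorff space, $\Phi$ is a homeomorphism.

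The main obstacle is this injectivity step, the Alexandrov analogue of the tubular neighborhood theorem. In the Riemannian setting one invokes the inverse function theorem and compactness to bound the normal injectivity radius away from zero, but neither the exponential map nor its derivative is available here. Instead I would argue directly that $G(p)$, being a compact smooth submanifold, admits a uniform $r_0 > 0$ below which the nearest-point projection onto $G(p)$ is single-valued and the realizing normal gradient curve is unique. This uses the first variation formula to rule out a second nearest point or a second realizing direction, the uniform control on gradient curves of $\dist_{G(p)}$ supplied by Perelman's theory, and the compactness of $G(p)$ to pass from a pointwise to a uniform radius. Establishing this uniqueness without recourse to focal points or Jacobi fields is the technical heart of the proof, and it is precisely where the extra tools developed for Alexandrov spaces must carry the argument.
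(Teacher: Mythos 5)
Your infinitesimal picture is fine (it is the paper's Proposition 4.1: $\Sigma_p = S^{k-1} * \nu_p$, proved via the Join Lemma rather than first variation alone), but the passage from it to the tube is where the argument breaks. The pivotal assertion --- that ``Perelman's conical neighborhood theorem as realized by the gradient exponential map'' gives a homeomorphism from the truncated cone $K\nu_p$ onto a slice --- is not available. Perelman's conical neighborhood homeomorphism is an abstract output of the Stability Theorem; it comes with no formula and is not induced by $\gexp_p$. The gradient exponential itself is surjective but in general \emph{not} injective, even on arbitrarily small neighborhoods of the origin: radial curves are only forward-unique, they need not be geodesics in directions where no geodesic issues from $p$ (and the domain of the genuine exponential need not contain a neighborhood of the origin), and distinct radial curves can be absorbed into an extremal set and coalesce. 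For the same reason your proposed repair of injectivity fails: there is no inverse function theorem, and uniqueness of the foot point on $G(p)$ together with uniqueness of the realizing direction is exactly the kind of ``normal injectivity radius'' statement that has no analogue here. Even if one had it, it would only identify the slice set-theoretically with a set of directions; it would not show that the slice is homeomorphic to $K\nu_p$, which is the actual content of the theorem.

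The paper's proof is built precisely to route around this. It first upgrades the Stability Theorem to produce a single homeomorphism $\theta: B_1(o) \to \frac{1}{r_0}B_{r_0}(p)$ whose inverse is an increasingly accurate Gromov--Hausdorff approximation of the rescaled logarithm on smaller and smaller balls (Theorem 4.4). This is what legitimately replaces ``$\gexp_p$ is a homeomorphism near $p$'': it shows that the blow-up of a small ball at $p$ is a product $\Sigma_p \times [0,1]$. One then blows up $X$ along the whole orbit, observes that the orbit space of the blow-up is the blow-up of the orbit space (which is a product, since small balls in $X/G$ are cones), and invokes Palais's Covering Homotopy Theorem to lift that product structure to the Montgomery--Yang slice $S$ with the point $p$ removed, yielding $S \setminus \{p\} \cong_{G_p} \nu_p \times (0,r_0)$. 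So the theorem is ultimately a soft topological consequence of stability plus the Covering Homotopy Theorem, not of any injectivity property of the gradient exponential; your outline is missing this entire mechanism, and the step it substitutes for it would fail.
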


 Recall that the group of isometries of an Alexandrov space is a Lie group \cite{FY}, just as for Riemannian manifolds \cite{MySt}.
 The project of classifying positively curved Riemannian manifolds with ``large" isometry groups, where the  
size of the group action may be 
 interpreted in a variety of ways, can therefore reasonably be extended to positively curved Alexandrov spaces.  
The present work shows that 
this extension provides more information about the nature of the different curvature-symmetry conditions.
 
The maximum dimension for the isometry group of an Alexandrov space is the same as
in the Riemannian case, and when this dimension is achieved the Alexandrov space is homogeneous \cite{GGG}, and so a Riemannian manifold \cite{Be}. In contrast to this, non-manifold Alexandrov spaces of cohomogeneity one \cite{GGS} exist in all dimensions greater than or equal to $3$. In particular, the suspension of any positively curved homogeneous space is a positively curved space of cohomogeneity one.

One notion of largeness is when the fixed point set of the group action, $X^G$, has maximal possible dimension in the space, that is when $\dim(X^G) = \dim(X/G) - 1$. Such actions are called {\it fixed-point homogeneous}, and the positively curved Riemannian manifolds of positive sectional curvature admitting such actions were classified in \cite{GS2}. 

\begin{fphthm} Let $G$, a compact Lie group, act isometrically and fixed-point homogeneously on $M^n$, a closed, simply-connected, positively curved Riemannian
manifold, with $M^G \neq \emptyset$. Then $M^n$ is diffeomorphic to one of $S^n$, $\ccc P^k$, $\hh P^m$ or ${\rm Ca}P^2$, where $2k=4m=n$.
\end{fphthm}

The orbit spaces of fixed-point homogeneous actions on positively curved Riemannian manifolds have a structure which certainly invites study, but at first sight one might well be surprised to find that only the rank one symmetric spaces admit such actions. Broadening the question to positively curved Alexandrov spaces sheds some light on why this is the case.

\begin{thmc} 
Let $G$, a compact Lie group, act isometrically and fixed-point homogeneously on $X^n$, a compact $n$-dimensional Alexandrov space of positive curvature,  
with $X^G\neq \emptyset$.
If $F$ is the component of $X^G$ with maximal dimension, then the following hold:
\begin{itemize}
\item[(i)] There is a unique orbit $G(p)\cong G/G_p$ at maximal distance from $F$ (the ``soul" orbit).
\item[(ii)]  The space $X$ is $G$-equivariantly homeomorphic to  
$(\nu *G)/G_p$, where $\nu$ is the space of normal directions to the orbit at $p$.
\end{itemize}
\end{thmc}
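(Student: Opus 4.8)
The plan is to pass to the orbit space $X^* = X/G$ and to exploit the concavity of distance functions in nonnegative curvature, sharpened by the positive lower bound. Since $G$ acts by isometries, the quotient $X^*$ is again a compact Alexandrov space with $\curv \geq k > 0$, and the fixed-point homogeneity hypothesis $\dim F = \dim(X^*) - 1$ says precisely that the image $\bar F = \pi(F)$ is a codimension-one face of the boundary $\partial X^*$, where $\pi \colon X \to X^*$ is the projection. The function $\rho = \dist(\cdot, F)$ is $G$-invariant, because $F$ is $G$-invariant and $G$ acts isometrically, so it descends to $\rho^* = \dist(\cdot, \bar F)$ on $X^*$, and the level sets of $\rho$ are unions of orbits.

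For part (i), I would first recall that the fixed-point set of an isometric action is an extremal subset (Perelman--Petrunin); hence $\bar F$ is extremal and, in the nonnegatively curved space $X^*$, the distance function $\rho^*$ is concave. Concavity forces the maximum set of $\rho^*$ to be convex, so if it were not a single point it would contain a nondegenerate geodesic segment along which $\rho^*$ is constant and equal to its positive maximum. A Toponogov comparison using the strict positivity of the curvature shows that along such a segment the concavity is in fact strict, a contradiction. Thus $\rho^*$ attains its maximum at a unique point $\bar p \in X^*$, and pulling back, $C = \rho^{-1}(\max \rho) = \pi^{-1}(\bar p)$ is a single orbit $G(p) \cong G/G_p$ at maximal distance from $F$, which proves (i).

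For part (ii), the two ends of $\rho^*$ are the face $\bar F = (\rho^*)^{-1}(0)$ and the soul point $\bar p$, and concavity together with positive curvature forces $\rho^*$ to have no critical points in between. Applying Petrunin's gradient flow of the concave function $\rho^*$ therefore identifies $X^*$ with the cone over $\bar F$ having vertex $\bar p$; equivalently, every point of $X^*$ lies on a unique minimal geodesic from $\bar p$ to $\bar F$. Upstairs, the Slice Theorem (Theorem B) describes a neighborhood of the soul orbit as $G \times_{G_p} K\nu_p$, where $\nu = \nu_p$ is the space of normal directions at $p$, and the $G_p$-slice representation records the isotropy data along each radial geodesic. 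Transporting the slice coordinates along the gradient-flow geodesics from $G(p)$ toward $F$ then defines a map $\Phi \colon (\nu * G)/G_p \to X$: the join coordinate parametrizes the radial direction from the soul orbit, where $\nu$ is collapsed, to the fixed face, where $G$ is collapsed; the $G$-factor records the orbit through $p$; and $G_p$ acts diagonally, by the slice representation on $\nu$ and by translation on $G$. Verifying that $\Phi$ is an equivariant homeomorphism completes the proof, and one checks that $\Phi$ descends to the cone structure of $X^*$ found above, since $\bar F = \nu/G_p$.

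The main obstacle I expect is this last step: promoting the gradient-flow deformation to a genuine equivariant homeomorphism onto the join model, rather than merely a homotopy equivalence. Two features require care. First, the collapse of the $G$-factor at the soul-free end of the join must be matched with the degeneration of the orbits $G(x) \to F$ as $\rho(x) \to 0$; this is governed by the slice representation but must be shown to be continuous and bijective across the entire flow. Second, one must verify that the radial geodesics emanating normally from the soul orbit neither cross one another nor fail to reach $F$ before the flow terminates, and it is here that the absence of intermediate critical points and the local product structure supplied by the Slice Theorem are used in tandem. Once these are established, equivariance of $\Phi$ is immediate from the construction.
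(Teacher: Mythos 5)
Your route is the same as the paper's: pass to $\bar{X}=X/G$, get the unique soul orbit from the Soul Theorem for part (i), and combine the Slice Theorem at the soul orbit with the radial retraction toward $F$ to produce the join for part (ii). Two of your justifications, however, are wrong as written. For (i), concavity of $\rho^\ast=\dist(\bar{F},\cdot)$ does \emph{not} follow from $\bar{F}$ being extremal --- the distance from an extremal set (e.g.\ an extremal point) is typically convex, not concave, in nonnegative curvature. What is true, and what the paper uses, is that fixed-point homogeneity makes $\bar{F}$ a union of \emph{boundary faces} of $\bar{X}$, and the distance from a union of boundary faces is concave in nonnegative curvature (W\"orner); this is exactly the form of the Soul Theorem quoted in the paper, which already yields a point soul when $\curv>0$.

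For (ii), your identification of $\bar{X}$ with the cone on $\bar{F}$ via ``unique minimal geodesics from $\bar{p}$ to $\bar{F}$'' is not available: minimal geodesics in Alexandrov spaces need not be unique, and the Sharafutdinov-type retraction is a gradient flow whose trajectories are not geodesics, so ``transporting slice coordinates along gradient-flow geodesics'' conflates two different objects. The paper sidesteps precisely the two difficulties you flag by never propagating coordinates along curves: the Slice Theorem, run with the Sharafutdinov retraction from $\bar{F}$ in place of the local strictly concave function (as explained at the end of the paper's Section 3), directly gives a $G$-equivariant homeomorphism from $X\setminus F$ to $G\times_{G_p}K\nu_p\cong(\nu\times G\times(0,1])/G_p$ with $\nu\times G\times\{1\}$ collapsed to $G$. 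One then checks that $F$ is homeomorphic to $\Sigma_{\bar{p}}\bar{X}\cong\nu/G_p$ and attaches it as the missing end $\nu\times G\times\{0\}$ collapsed to $\nu$, which is by definition $(\nu\ast G)/G_p$. So the join is obtained from the global slice description plus a single gluing at $F$, and the continuity and injectivity issues you raise are absorbed into the proof of the Slice Theorem rather than handled along individual radial curves.
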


We see from this that fixed-point homogeneous spaces are plentiful among the positively curved Alexandrov spaces. In fact, the two sets have the same cardinality, since for every positively curved space $\nu$, its join to a positively curved homogeneous $G$-space yields a fixed-point homogeneous $G$-space. Comparison of the two results suggests that fixed-point homogeneity is only a highly restrictive measure of symmetry in the context of Riemannian manifolds. Indeed, the proof of the original theorem relies completely on $\nu$ being isometric to the round sphere.

Another possible measure of symmetry  is 
the {\it symmetry rank} of the space, where 
\bdm
\symrank(X)= \rk(\Isom(X)).
\edm
 Closed Riemannian manifolds with positive curvature and maximal symmetry rank were classified in \cite{GS}. 

\begin{MSR}\label{t:msrriemannian} Let $M$ be an $n$-dimensional, closed, connected Riemannian manifold with positive sectional curvature.
Then 
\begin{itemize}
\item[(1)]
$\symrank(M)\leq \lfloor \frac{n+1}{2}\rfloor$.
\vskip .1cm
\item[(2)]  Moreover,  equality holds in $(1)$ only if $M$ is diffeomorphic to a sphere, a real or complex projective space 
or a lens space.
\end{itemize}
\end{MSR}

The corresponding result for Alexandrov spaces provides an alternative statement of the result, and gives a somewhat different view of the phenomenon.

\begin{thmd}\label{t:MSRA} Let $X$ be an $n$-dimensional, compact,
 Alexandrov space with $\curv \geq 1$ admitting an isometric, (almost) effective $T^k$-action. Then $k\leq \lfloor \frac{n+1}{2} \rfloor$ and in the case of equality either
\begin{enumerate}
\item $X$ is a spherical orbifold, homeomorphic to $S^n / H$, where $H$ is a finite subgroup of the centralizer of the maximal torus in $O(n+1)$ or
\item only in the case that $n$ is even, $X \cong S^{n+1}/H$, where $H$ is a rank one subgroup of the maximal torus in $O(n+2)$.
\end{enumerate}
In both cases the action on $X$ is equivalent to the induced action of the maximal torus on the $G$-quotient of the corresponding sphere.
\end{thmd}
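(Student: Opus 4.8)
The plan is to follow the inductive strategy of Grove and Searle, but to run the induction inside the category of compact positively curved Alexandrov spaces, where both the space of directions $\Sigma_p X$ at a point and the fixed-point set $\Fix(H)$ of an isometric subaction are again Alexandrov spaces with $\curv \geq 1$ and strictly smaller dimension. The engine of the argument is the fixed-point set of a circle subgroup, and the two new tools of the paper are exactly what make the reduction rigorous: Theorem B (the Slice Theorem) lets me read off the induced action on a neighbourhood of an orbit from the isotropy action on the space of normal directions, while Theorem A (the ramified orientable double cover) is used at the end to control the finite group appearing in the quotient.

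For the bound I induct on $n$. If the $T^k$-action has a point $q$ with $\dim T_q \geq 1$, choose a circle $S^1 \subseteq T_q^0$ and let $N$ be the component of $\Fix(S^1)$ through $q$. At any $p \in N$ the normal directions $\nu_p$ form a positively curved Alexandrov space of dimension $\codim N - 1$ on which $S^1$ acts with no fixed direction; since a connected group acting on a space of dimension $0$ must have a fixed point, we get $\codim N \geq 2$, i.e. $\dim N \leq n-2$. Because $T^k$ is abelian it preserves $N$ and $S^1$ acts trivially there, so $T^{k-1} = T^k/S^1$ acts on $N$, and this action is almost effective: by the Slice Theorem, an element acting trivially near a point of $N$ would fix an open set and hence all of $X$. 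Induction then gives
\[
k-1 \leq \left\lfloor \tfrac{\dim N + 1}{2}\right\rfloor \leq \left\lfloor \tfrac{n-1}{2}\right\rfloor, \qquad \text{so} \qquad k \leq \left\lfloor \tfrac{n+1}{2}\right\rfloor.
\]
The remaining possibility is that every isotropy group is finite, i.e. the action is almost free; this case has no fixed-point set to induct on and must be treated separately, bounding the rank of an almost free torus action on a positively curved space directly (in even dimensions an analogue of Berger's theorem rules it out, and in odd dimensions one bounds the rank through the geometry of a single orbit). One checks that this case never attains the extremal rank, so the equality analysis takes place entirely in the first branch.

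For the equality statement I track when these inequalities are sharp. Equality forces $T^{k-1}$ to act on $N$ with maximal rank, so that $N$ is, by the inductive hypothesis, one of the listed sphere quotients, and it forces $\dim N$ to be as large as the arithmetic allows. When $\dim N = n-2$ the normal cone to $N$ is two-dimensional and the reconstruction leads to the finite quotients $S^n/H$ of case (1); crucially, for $n$ even the same numerics also permit $\dim N = n-3$, since---unlike in the Riemannian setting---the fixed-point set of a circle in an Alexandrov space need not have even codimension, and it is exactly this extra possibility that produces the $\ccc P$-type spaces $S^{n+1}/H$ of case (2), the parity restriction being forced by the fact that such a circle quotient drops dimension by one only off an odd-dimensional sphere. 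In either case I reconstruct $X$ from $N$ and its normal data using the distance function from $N$: positive curvature produces a dual orbit at maximal distance, the ``soul orbit'' of Theorem C, and the Slice Theorem presents neighbourhoods of $N$ and of its dual as orbifold cone bundles, so that $X$ is their union glued along a common boundary. Recognising this double-cone decomposition as a quotient of the appropriate round sphere, and using Theorem A together with the description of the centralizer of the maximal torus in $O(n+1)$, identifies $H$ and confirms that the action is the standard induced one.

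The main obstacle is this final reconstruction rather than the rank bound. The bound is a clean induction once one knows that circle fixed-point sets have codimension at least two and that residual torus actions remain almost effective, both of which follow from the tools already developed. The genuinely delicate part is the rigidity: showing that maximal symmetry forces $X$ to be precisely one of the sphere quotients, controlling the almost-effective kernel throughout the induction, cleanly separating the orbifold case (1) from the even-dimensional $\ccc P$-type case (2) at the level of the normal cone, and pinning down $H$ exactly. It is here that the equivariant homeomorphism of Theorem B and the orientation theory of Theorem A are indispensable, since in the absence of a smooth structure the reconstruction must be carried out entirely through the local cone models and their gluings.
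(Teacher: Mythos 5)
Your overall strategy (induct on dimension, reduce via a circle subgroup with large fixed-point set, reconstruct $X$ from the soul orbit using Theorems B and C) is the same as the paper's, but two of your key steps are wrong as stated. First, your mechanism for producing case (2) rests on the claim that the fixed-point set of a circle acting on an Alexandrov space ``need not have even codimension.'' This is false: Proposition \ref{p:evencodim} asserts even codimension, and the proof is immediate from tools you already invoke --- the circle acts on the normal space $\nu_p$ with no fixed directions, and by the Berger-type Lemma \ref{l:fixedpoint} a circle acting on an even-dimensional positively curved Alexandrov space always has a fixed point, so $\nu_p$ must be odd-dimensional. Consequently $\dim N = n-3$ never occurs and case (2) cannot arise that way. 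In the paper it arises instead from a dichotomy at the soul orbit $T^1(p)$ of the fixed-point homogeneous circle: either $T^1_p$ is finite (a circle orbit, contributing a finite factor to $H$ and leading to $S^n/H$), or $T^1_p = T^1$ (a fixed point, where the quotient $(\nu * S^1)/T^1$ drops dimension by one and produces $S^{n+1}/H$ with $\rk H = 1$; this forces $n$ even because only odd-dimensional spheres carry a rank-one subgroup of the torus acting with finite isotropy). Second, your reconstruction of $X$ as a union of two orbifold cone bundles glued along a common boundary is precisely the decomposition the paper shows can fail for Alexandrov spaces: for the suspended circle action on $\Sigma \rrr P^2$, non-principal orbits occur away from $F \cup G(p)$ and the space is not a union of cone bundles over $F$ and the soul orbit. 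The correct replacement is the join description of Theorem C, $X \cong (\nu * S^1)/T^1_p$, iterated on $\nu$ until one reaches $S^0$ or $S^1$, which exhibits $X$ as $S^m/H$ with $H$ inside a linearly acting torus and either $m=n$, $\rk H = 0$ or $m=n+1$, $\rk H =1$.

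There is also a gap in your rank bound. You induct on $N = \Fix(S^1)$, asserting that $T^k/S^1$ acts almost effectively on $N$ because ``an element acting trivially near a point of $N$ would fix an open set.'' But the kernel of the action on $N$ consists of elements fixing $N$ pointwise, and such an element need not fix any open subset of $X$: it can act nontrivially on the directions normal to $N$, exactly as the circle you quotiented by does. So almost effectiveness of the residual action on $N$ does not follow, and the inductive hypothesis cannot be applied there. The paper inducts on the normal space instead: at a fixed point (even $n$, guaranteed by Lemma \ref{l:fixedpoint}) or at a circle orbit (odd $n$, guaranteed by Corollary \ref{c:odddim}) the isotropy group acts effectively on $\nu_p$ by \cite{GGG}, and $\nu_p$ has dimension $n-1$, respectively $n-2$, which closes the arithmetic. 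This also disposes of your separately flagged ``almost free'' case without further argument: for $k \geq 2$ a circle orbit of $T^k$ has isotropy of rank $k-1 \geq 1$, so a maximal-rank action is never almost free.
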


We can now see precisely why the list of maximal symmetry rank manifolds is short: it is because the orthogonal group contains very few subgroups which simultaneously commute with the maximal torus and act freely on the sphere. 
In fact, in $O(2n)$, $Z(T^n)=T^n$ and the only subgroups of $T^n$ that can act freely on $S^{2n-1}$ are finite cyclic or the diagonal circle. In contrast, in $O(2n+1)$, no subgroup of $T^n$ acts freely, but $Z(T^n)\neq T^n$ and the antipodal map is the only subgroup of $Z(T^n)$ that does act freely on $S^{2n}$.

Indeed, it is clear from the proof of the Riemannian version of the theorem that the group actions are always induced by the maximal torus in the orthogonal group (cf. also  \cite{GG2}, or \cite{MY}, noting that the latter contains a more general result applicable to the spherical space-forms). 

Alexandrov spaces of maximal symmetry rank must be topological manifolds in dimensions less than or equal to $3$, whereas in dimension $4$, there are numerous examples of Alexandrov spaces of maximal symmetry rank that are not manifolds, and in dimension $6$ there are examples which are not even orbifolds. However, since these spaces are all ultimately spherical in origin, we can conclude
that the combination of positive curvature with maximal symmetry rank is restrictive in its own right, and not only in the Riemannian setting.

An immediate corollary of Theorem D is the classification of positively curved Riemannian orbifolds of maximal symmetry rank.

\begin{CorE}Let $X$ be an $n$-dimensional, closed
 Riemannian orbifold with positive sectional curvature admitting an isometric, (almost) effective $T^k$-action. Then $k\leq \lfloor \frac{n+1}{2} \rfloor$ and in the case of equality either
\begin{enumerate}
\item $X$ is a spherical orbifold, homeomorphic to $S^n / \Gamma$, where $\Gamma$ is a finite subgroup of the centralizer of the maximal torus in $O(n+1)$ or
\item only in the case that $n$ is even, $X$ is homeomorphic to a finite quotient of a weighted complex projective space $\ccc P^n_{a_0,a_1,\cdots,a_n}/\Gamma$, where $\Gamma$ is a finite subgroup of the linearly acting torus.
\end{enumerate}
\end{CorE}

The paper is organized as follows. In Section \ref{s:prelim}, we will recall some general facts about Alexandrov spaces. In Section \ref{s:lno}, we develop the theory of ramified orientable double covers, proving Theorem A. In Section \ref{s:slice}, we will consider isometric group actions on Alexandrov spaces and prove the Slice Theorem, and in Section \ref{s:fixedpoints}, we generalize other well-known results on Riemannian group actions. In Section \ref{s:fph}, we prove Theorem C, which is then applied in Section \ref{s:msr} to prove Theorem D.

Finally, we note that 
in a forthcoming paper \cite{HS} we will prove the following result for positively curved spaces having almost maximal symmetry rank in dimension 4. These spaces are also quotients of spheres, but there is a larger list of admissible groups.

\begin{theorem} Let $T^1$ act isometrically and effectively on $X^4$, where $X$ is a compact, positively curved, orientable Alexandrov space. Then up to homeomorphism
$X^4$ is any orientable suspension (in which case it is given by $S^4/H$ where $H$ is a finite subgroup of the centralizer of a $T^1 \subset SO(5)$) or is $S^5/H$ where $H$ is a rank one subgroup of the centralizer of any $T^2 \subset SO(6)$.
\end{theorem}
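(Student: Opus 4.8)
The plan is to pass to the orbit space, read off the isotropy data with the Slice Theorem, and then reconstruct $X$ as a quotient of a round sphere. First I would consider the quotient $X^* = X/T^1$. Since $X$ is compact and positively curved and the action is effective, $X^*$ is a compact, positively curved Alexandrov space of dimension $3$, and hence a topological $3$-manifold (possibly with boundary). Theorem B identifies the local structure over each orbit: principal orbits have trivial isotropy, exceptional orbits have finite cyclic isotropy, and fixed points have isotropy $T^1$. Examining the resulting slices, a fixed point $p$ contributes a boundary point of $X^*$ exactly when $T^1$ acts with a fixed direction on the space of directions $\Sigma_p$, that is, exactly when $p$ lies in a $2$-dimensional component of $X^{T^1}$; isolated fixed points and exceptional orbits instead produce interior cone singularities. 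This reduces the problem to understanding the $3$-dimensional base together with its singular data and then reconstructing $X$ over it.

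Next I would split according to the dimension of $X^{T^1}$. If $X^{T^1}$ has a $2$-dimensional component $F$, then $\dim F = 2 = \dim(X/T^1) - 1$, so the action is fixed-point homogeneous and Theorem C applies: $X$ is $T^1$-equivariantly homeomorphic to $(\nu * T^1)/(T^1)_p$, where the soul orbit is a circle $T^1/(T^1)_p$ with $(T^1)_p$ finite and $\nu$, its $2$-dimensional space of normal directions, is a closed positively curved surface. Using the join identity $(S^2/\Gamma_0) * S^1 = S^4/\Gamma_0$ together with the orientability of $X$, one reduces $\nu$ to a global quotient of $S^2$ and realises $X$ as $S^4/H$ with $H$ a finite subgroup of the centralizer of $T^1$ in $SO(5)$. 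These are precisely the orientable suspensions of case~(1).

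In the complementary case $X^{T^1}$ is at most $0$-dimensional, so $\partial X^* = \emptyset$ and $X^*$ is a closed, positively curved $3$-manifold, hence homeomorphic to a spherical space form $S^3/\Gamma$. Now $X \ra X^*$ is a Seifert fibration whose singular fibres sit over the isolated fixed points and exceptional orbits, and I would classify the admissible fibration data compatible with positive curvature and orientability. The space of directions at each fixed point is a round spherical space form on which $T^1$ acts linearly, which forces the local models to be standard; gluing these linear models and lifting the $T^1$-action, one recovers $X$ as $S^5/H$, where $S^5 \subset \ccc^3$ carries its linear $T^2$-action, $H$ is a rank-one subgroup of the centralizer of $T^2$ in $SO(6)$, and the residual $T^2/H \cong T^1$ gives back the action. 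The fibrations of suspension type fall back into case~(1).

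The main obstacle is precisely this reconstruction and explicit spherical realisation. In the maximal symmetry rank setting of Theorem D the extra torus factor rigidifies the normal geometry and forces the relevant space of directions to be a round sphere; here, with only a circle acting, $\nu$ may be any positively curved surface, so no metric rigidity is available and the argument must proceed equivariantly and topologically. Controlling the interior cone singularities, distinguishing genuine suspensions from weighted complex projective spaces, and using orientability (in the spirit of Theorem A) to cut the admissible groups $H$ down to the stated centralizers are the delicate steps that replace the rigidity argument of the maximal case.
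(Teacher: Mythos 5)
This theorem is not proved in the paper at all: it is stated in the Introduction as an announcement of the forthcoming work \cite{HS}, so there is no in-paper proof to compare your argument against. Judged on its own terms, your outline is consistent with the methods of Sections 4--6. The first branch --- a two-dimensional component of $X^{T^1}$ forces fixed-point homogeneity, Theorem C gives $X \cong (\nu * T^1)/(T^1)_p$ with $\nu$ a closed positively curved surface, and the join identity realises this as $S^4/H$ --- is a reasonable and essentially complete reduction, modulo pinning down $H$ inside the centralizer of $T^1 \subset SO(5)$ using orientability.

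The second branch, however, contains genuine gaps. Your assertion that $X^* = X/T^1$ is a topological $3$-manifold is unjustified: a positively curved Alexandrov $3$-space need not be a manifold (Proposition \ref{t:3dimclassification} exhibits $\Sigma \rrr P^2$), and the space of directions of $X^*$ at the image of a fixed or exceptional orbit is a positively curved surface quotient that could a priori be $\rrr P^2$ or a disk, producing non-manifold or boundary points. Ruling this out requires an analysis of the possible spaces of directions $\Sigma_p X$ and their circle quotients, using the orientability of $X$; without it the appeal to the spherical space form classification of the base does not apply. More seriously, the entire content of the theorem is concentrated in the step you explicitly defer: showing that the Seifert-type fibration data over the (possibly singular) base, together with the local linear models at the singular orbits, is always realised by a rank one subgroup $H$ of the centralizer of a linear $T^2 \subset SO(6)$ acting on $S^5$. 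Nothing in the outline excludes exotic gluings, and the inductive join mechanism that does this work in the proof of Theorem D is unavailable with only a circle acting, as you yourself note. As written, this is a plausible programme rather than a proof, which is consistent with the authors' decision to defer the result to \cite{HS}.
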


In the special case of positively curved, simply-connected Alexandrov $4$-manifolds, Galaz-Garc\'ia has 
shown that such spaces are equivariantly homeomorphic to $S^4$ or $\ccc P^2$ \cite{GG}.

\begin{acknowledge}
The authors are grateful to 
Christine Escher, Karsten Grove, Alexander Lytchak, Ricardo Mendes, Anton Petrunin, and Conrad Plaut for helpful conversations, as well as to Fernando Galaz-Garc\'ia for initial conversations with C.\ Searle from which this paper evolved. The work in this article forms a part of J. Harvey's doctoral dissertation, which is being completed with the insightful advice of Karsten Grove.  C. Searle is grateful to the Mathematics Department of the University of Notre Dame for its hospitality during a visit where a part of this research was carried out. 
\end{acknowledge}

\section{Preliminaries}\label{s:prelim}

In this section we will first fix notation and then recall basic definitions and theorems about Alexandrov spaces.

We will denote an Alexandrov space by $X$, and will always assume it is complete and finite-dimensional. Given an isometric (left) action $G\times X\rightarrow X$ of a Lie group $G$, and a point $p\in X$, we let $G(p)=\{\,gp :g\in G \,\}$ be the \emph{orbit}  of $p$ under the action of $G$. The \emph{isotropy group} of $p$ is the subgroup $G_p=\{\, g\in G: gp=p\,\}$. Recall that $G(p)\cong G/G_p$. We will denote the orbit space of this action by $\bar{X}=X/G$. Similarly, the image of a point $p\in X$ under the orbit projection map $\pi:X\rightarrow \bar{X}$ will be denoted by $\bar{p}\in \bar{X}$.

We will assume throughout that $G$ is compact and
in Section 5 that its action is either \emph{effective} or \emph{almost effective}, i.e., that $\bigcap_{p\in X}G_p$ is respectively either trivial or a finite subgroup of $G$. We will call two $G$-actions \emph{equivalent} when there is a $G$-equivariant homeomorphism between the two $G$-spaces.

We will always consider the empty set to have dimension $-1$. Homeomorphisms and isomorphisms will be represented by $\cong$, while isometries will be represented by $=$. We will use $T^1$ to refer to the circle as a Lie group, and $S^1$ to refer to it as a topological space without any group structure. 

\subsection{Basics of Alexandrov geometry}\label{ss:basics} 

A finite-dimensional \textit{Alexandrov space} is a locally complete, locally compact, connected (except in dimension 0, where a two-point space is admitted by convention) length space, with a lower curvature bound in the triangle-comparison sense. Like most authors, we will assume that the space is complete. For non-complete spaces, we will follow \cite{Pet1} in using the term \textit{Alexandrov domain}. Every point in an Alexandrov domain has a closed neighborhood which is an Alexandrov space \cite{PP1}. There are a number of introductions to Alexandrov spaces to which the reader may refer for basic information (cf.\ \cite{ BBI, BGP, P,  Pl, Sh}).

A more analytic formulation of the curvature condition, using the concavity of distance functions, was introduced in \cite{PP2}. We say that a function $f: \rrr \ra \rrr$ is \emph{$\lambda$-concave} if it satisfies the differential inequality $f'' \leq \lambda$, in the barrier sense. A function $f : X \ra \rrr$ on a length space $X$ is $\lambda$-concave if its restriction to every geodesic (shortest path) is $\lambda$-concave. We use the term \emph{semi-concave} to describe functions which are locally $\lambda$-concave, where $\lambda$ need not have a uniform upper bound. Semi-concave functions on Alexandrov spaces have a well-defined gradient, and in particular, this gives rise to a gradient flow (introduced in \cite{PP2}, see also \cite{Pet3}).

We
say that a complete length space $X$ is an Alexandrov space with $\curv(X)\geq k$ if, for any point $p$,
a modification of $\dist(p,\cdot)$
satisfies a certain concavity condition.
In particular, if we let $f=\rho_k \circ \dist(p,\cdot)$, where 

\begin{equation}\label{e:1} 
\rho_k(x) = \begin{cases} 1/k(1-\cos(x\sqrt{k})), & \mbox{if } k>0 \\
x^2/2, & \mbox{if } k=0  \\
1/k(1-\cosh(x\sqrt{-k})), & \mbox{if } k<0, \\ \end{cases}
\end{equation}
then $f$ must be $(1-kf)$-concave. Note that in a space form of constant curvature $k$ equality holds, that is, $f'' = 1-kf$.

The \emph{space of directions} of an Alexandrov space $X^n$ of dimension $n$ at a point $p$ is,
by definition, the completion of the 
space of geodesic directions at $p$ and is denoted by $\Sigma_p X$ or, if there is no confusion, $\Sigma_p$.  For any subset $Y$ of $X^n$, we denote the space of directions tangent to $Y$ at $p \in Y$ by $\Sigma_p Y$.
The space of directions of $X^n$ is a compact Alexandrov space of dimension $n-1$ with $\curv \geq 1$.
We recall here a particularly useful result for such spaces (cf.\ \cite{GWannals}), which can be seen as a natural counterpart to the Splitting Theorem in non-negative curvature.

\begin{lemma}[Join Lemma]\label{l:join} Let $X$ be an $n$-dimensional Alexandrov space with  $\curv \geq 1$. If $X$ contains an isometric copy of the unit round sphere $S^m_1$, then $X$ is isometric to the spherical join $S_1^m*\nu$, where $\nu$ is an isometrically embedded $(n-m-1)$-dimensional Alexandrov space with $\curv \geq 1$ which we will refer to as the \emph{normal space} to $S^m_1$ .
\end{lemma}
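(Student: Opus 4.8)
The plan is to argue by induction on $m$, peeling off one spherical suspension factor at a time and using the space of directions to pass from $X$ to a lower-dimensional Alexandrov space that still contains a round sphere of one dimension less. The base case $m=0$ is precisely the \emph{maximal diameter theorem} for Alexandrov spaces with $\curv \geq 1$ (cf.\ \cite{BGP}): a copy of $S^0_1$ in $X$ is just a pair of points $p,q$ at distance $\pi$, and since $\curv(X) \geq 1$ forces $\dist(p,x) + \dist(x,q) = \pi$ for every $x$ (the triangle inequality gives $\geq \pi$, while the fact that geodesic triangles in $\curv \geq 1$ spaces have perimeter at most $2\pi$ gives $\leq \pi$), every point lies on a minimizing $p$--$q$ geodesic of length $\pi$. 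The rigidity in the concavity estimate $f'' \leq 1 - f$ for $f = \rho_1 \circ \dist(p, \cdot)$ along these geodesics then identifies $X$ isometrically with the spherical suspension $S^0_1 * \Sigma_p X$, the poles being $p$ and $q$ and the base being $\Sigma_p X$, an $(n-1)$-dimensional Alexandrov space with $\curv \geq 1$. This is exactly the asserted decomposition $X = S^0_1 * \nu$ with $\nu = \Sigma_p X$.

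For the inductive step, assume the statement for $m-1$ and suppose $S^m_1 \hookrightarrow X$ isometrically. First I would choose two antipodal points $p, q \in S^m_1$; since the embedding preserves distances they satisfy $\dist_X(p,q) = \pi$, so the maximal diameter theorem applies to give an isometry $X = S^0_1 * \Sigma_p X$ with poles $p, q$. The key point is then to transfer the sphere down to the space of directions. Because the embedding $S^m_1 \hookrightarrow X$ is distance-preserving, every minimizing geodesic of $S^m_1$ is a minimizing geodesic of $X$, and comparison angles between such geodesics at $p$ are computed from distances among points of $S^m_1$, which are the same in $X$; hence $\Sigma_p S^m_1 \hookrightarrow \Sigma_p X$ is an isometric embedding of the round sphere $S^{m-1}_1$ into $\Sigma_p X$.

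Now I would apply the inductive hypothesis to $\Sigma_p X$, which has dimension $n-1$ and $\curv \geq 1$ and contains this isometric $S^{m-1}_1$. This yields $\Sigma_p X = S^{m-1}_1 * \nu$ with $\nu$ an Alexandrov space of dimension $(n-1)-(m-1)-1 = n-m-1$ and $\curv \geq 1$. Using associativity of the spherical join and the identity $S^0_1 * S^{m-1}_1 = S^m_1$, I obtain
\[
X = S^0_1 * \Sigma_p X = S^0_1 * \left( S^{m-1}_1 * \nu \right) = \left( S^0_1 * S^{m-1}_1 \right) * \nu = S^m_1 * \nu,
\]
which is the desired conclusion. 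To see that the $S^m_1$ appearing here coincides with the originally embedded copy, note that the subjoin $S^0_1 * \Sigma_p S^m_1$ is swept out by the minimizing $p$--$q$ geodesics whose directions are tangent to $S^m_1$; by uniqueness of these meridians in a suspension this union is exactly the given round sphere, so the join factor is the prescribed one and $\nu$ is its normal space.

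The main obstacle is the base case, i.e.\ the maximal diameter rigidity: extracting the isometry with the suspension from the mere existence of a diameter pair requires the equality discussion in the barrier concavity inequality for $\rho_1 \circ \dist(p,\cdot)$, controlling both the lengths of the $p$--$q$ geodesics and the behavior of $\Sigma_p X$ along them. Once this is cited as known, the inductive machinery is essentially formal, the only genuine checks being that isometric embeddings descend to isometric embeddings on spaces of directions and that the reconstructed join factor is the original sphere.
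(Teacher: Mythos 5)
The paper does not prove this lemma at all --- it is quoted as a known result with a pointer to \cite{GWannals} --- so there is no in-paper argument to compare against. Your proof is correct and is essentially the standard one underlying that citation: induction on $m$, with the base case supplied by the maximal-diameter suspension rigidity theorem for $\curv\geq 1$ and the inductive step carried out by passing to $\Sigma_p X$, where the key observations (that an isometrically embedded $S^m_1$ induces an isometrically embedded $S^{m-1}_1=\Sigma_p S^m_1$ in the angle metric, since comparison angles are determined by distances, and that the meridians through directions tangent to $S^m_1$ reconstitute the original sphere as the join factor) are all handled correctly. The only external input is the suspension rigidity statement, which is legitimate to cite as known.
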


The cone on $\Sigma_p X$, endowed with the Euclidean cone metric, is 
called the \emph{tangent cone} and is written as $T_p X$. In fact, 
$(\frac{1}{r}X , p)\ra (T_p X, o)$, as $r \ra 0$.

\subsection{Singularities and Topology}

In order to understand Alexandrov spaces, 
a grasp of their local structure is required. This was achieved with Perelman's Stability Theorem \cite{P} (cf.\ \cite{K}).

\begin{Stability}\label{t:stability}
Let $X^n_i$ be a sequence of Alexandrov spaces with curvature uniformly bounded from below, converging to an Alexandrov space $X^n$ of the same dimension and let $\lim_{i \to \infty} o(i) = 0$. Let $\theta_i: X \ra X_i$ be a sequence of $o(i)$-Hausdorff approximations. Then for all large $i$ there exist homeomorphisms $\theta'_i : X \ra X_i$, $o(i)$-close to $\theta_i$.
\end{Stability}

In particular, it follows that a small metric ball centered at $p \in X$ is homeomorphic to $T_p X$, and so  Alexandrov spaces are spaces with multiple conic singularities in the following sense \cite{Si}.

\begin{definition}$X$ is a \emph{space with multiple conic singularities} or \emph{MCS space} of dimension $n$ if and only if every point $p \in X$ has a neighborhood which is pointed homeomorphic to an open cone on a compact MCS space of dimension $n-1$, where the unique MCS space of dimension $-1$ is the empty set.\end{definition}

An important difference between Riemannian manifolds and Alexandrov spaces is the existence of these singularities. We refer to a point $p\in X$ as \emph{regular}   if $\Sigma_p$ is isometric to the unit round sphere and as \emph{singular} otherwise.  We make the further distinction that a point is \emph{topologically singular} if its space of directions is not homeomorphic to a sphere. The set of regular points of an Alexandrov space is dense and convex, while the singular points ``may be arranged chaotically" \cite{PP1}.

By restricting our attention to certain types 
of singularities, we can stratify an Alexandrov space into topological 
manifolds in two different ways: the first stratification is purely topological and the second, which takes into account metric information, is by extremal sets.

 The canonical stratification  of an MCS space into topological manifolds is given as follows: a point $p \in X$ belongs to the $l$-dimensional stratum $X^{(l)}$ if $p$ has a conic neighborhood homeomorphic to $\rrr^l \times K$, where $l$ has been chosen to be maximal and $K$ is a cone on a compact MCS space. We will describe 
$K$ as the normal cone to the stratum $X^{(l)}$. For more information on this topological stratification of Alexandrov spaces, see \cite{P4}. Using this stratification, we can see that the codimension of the set of topologically singular points, other than boundary points, is at least $3$.

The more refined stratification by \textit{extremal sets} 
is given in \cite{PP1}. A non-empty, proper extremal set comprises points with spaces of directions which  differ significantly from the unit round sphere. They can be defined as the sets which are ``ideals" of the gradient flow of $\dist(p,\cdot)$ for  every point $p$. Examples of extremal sets are isolated points with spaces of directions of diameter $\leq \pi/2$, the boundary of an Alexandrov space and, in a trivial sense, the entire Alexandrov space. The restriction of this stratification to the boundary partitions the boundary into \emph{faces}.  We refer the reader to \cite{Pet3} for definitions and important results.

\subsection{Quasigeodesics and Radial Curves}\label{ss:qgrc}

When an extremal set is given its intrinsic path metric, the shortest paths in the extremal set exhibit characteristics similar to geodesics. We can generalize the notion of geodesic to include these ``quasigeodesics" as follows.

\begin{definition}\label{d:1} A curve $\gamma$ in $X$, an Alexandrov space with $\curv \geq k$, is a \textit{quasigeodesic}  if and only if it is parametrized by arc-length and
$f(t) = \rho_k \circ \dist(p, \gamma(t))$ is $(1-kf)$-concave, where $\rho_k$ is as
defined in Equation \ref{e:1}.
\end{definition}

The natural generalization of totally geodesic submanifolds from Riemannian geometry is the totally quasigeodesic  subset.
 We say that a closed subset $Y\subset X$ is totally quasigeodesic if 
a shortest path in $Y$ between points is a quasigeodesic in the ambient space $X$. See \cite{Pet3} for the formal definition. Extremal sets are the most important example of totally quasigeodesic subsets.

In Riemannian geometry, geodesics are used to define an exponential map. While this can be done in Alexandrov geometry, the domain of the exponential map does not in general contain an open neighborhood of the origin of the tangent cone. Instead, we can define a \emph{gradient exponential} map \cite{PP2, Pet3}, which relies on an alternative generalization of geodesics: instead of a geodesic starting at a point with a particular initial direction we use a \emph{radial curve}. 

\begin{definition}\label{d:gexp}
Let $X$ be an Alexandrov space and let $p \in X$. Let $f = \frac{1}{2} \dist(p,\cdot)^2$. Let $i_s: sX \ra X$ denote the canonical dilation map, and let $\Phi^t_f:X\ra X$ denote the gradient flow of $f$ for time $t$. Consider the convergence $(e^t X , p) \ra (T_p X, o)$ as $t \ra \infty$. The \emph{gradient exponential} at $p$, $\gexp_p : T_p X \ra X$ is given by $\lim_{t \ra \infty} \Phi^t_f \circ i_{e^t}$.
\end{definition}

This is the optimal definition for a non-negatively curved space, where $\gexp_p$ is 1-Lipschitz, though it still works for most applications regardless of the particular curvature bound. Note that $f=\rho_0 \circ \dist(p,\cdot)$ is 1-concave in such a space. In the general case, on a bounded ball $B_r(p)$ the concavity of $f$ is still well-controlled and so the maps in the sequence are all locally Lipschitz, and on any bounded set the Lipschitz constant is uniform. Therefore a partial limit does exist. For any partial limit we have $\Phi^t_f \circ \gexp_p(v) = \gexp_p(e^t v)$, and hence the limit map is unique.

\begin{definition}
The \emph{radial curve} starting at $p$ in the direction $v \in \Sigma_p$ is the curve $\gamma: [0,\infty) \ra X$ given by $\gamma(t)=\gexp_p(tv)$.
\end{definition}
  
The radial curves are gradient curves for $\dist(p,\cdot)$ which have been reparametrized in a way which coincides with the arc-length parametrization so long as the radial curve is a geodesic. 

If we wish to obtain more control over the behaviour of the gradient exponential, as in the proof of Theorem A, we can adjust the definition \cite{Pet3}. It is simplest to adjust the definition of the radial curve, and then redefine the gradient exponential accordingly. First note that a standard radial curve $\alpha(t)$ starting at $p$ in the direction $\xi$ is the solution of the differential equation
$$\alpha^+(t) = \frac{|p \alpha(t)|}{t} \nabla \alpha(t) \mbox{ for all } t > 0, \mbox{ with } \alpha(0)=p \mbox{ and } \alpha^+(0)=\xi.$$

We change the differential equation to 
\begin{align*}
\alpha^+(t) &= \frac{\sinh(|p \alpha(t)|)}{\sinh(t)} \nabla \alpha(t) \mbox{ for }k=-1\mbox{ and }\\
\alpha^+(t) &= \frac{\tan(|p \alpha(t)|)}{\tan(t)} \nabla \alpha(t) \mbox{ for }k=1
\end{align*}
$$$$
and rescale appropriately for other curvature bounds.

The corresponding gradient exponential maps will still be 1-Lipschitz if we change the metric on the tangent cone, defining it by the hyperbolic, 
respectively spherical, law of cosines, instead of the Euclidean law.

We can define a partial inverse to the exponential at $p$, which we will refer to as the \emph{logarithm} and denote by $\log_p$.  This function is not necessarily uniquely defined or continuous. The composition $\gexp_p \circ \log_p$ is the identity map on $X$, and it can be shown that the rescaled logarithm $r \cdot \log_p : \frac{1}{r} B_r(p) \ra B_1 (o) \subset T_p X$ is a Gromov-Hausdorff approximation witnessing the convergence of $(\frac{1}{r}X,p) \ra (T_p X,o)$ \cite{BBI}.

\subsection{Classical Theorems for Positive and Non-Negative Curvature}

For a Riemannian manifold of positive sectional curvature there are two important theorems that 
characterize its topology. They are the Bonnet-Myers Theorem, which tells us that if the curvature is bounded away from zero the manifold is compact and the fundamental group is finite, and Synge's Theorem, which tells us that in even dimensions an orientable manifold of positive curvature is simply connected and in odd dimensions a manifold of positive curvature is orientable. In non-negative curvature, the Soul Theorem controls the topology of open manifolds and of compact manifolds with boundary.

As
pointed out in \cite{Pet1}, Alexandrov spaces are unlike manifolds in that they can have arbitrarily small neighborhoods which do not admit an orientation. In particular, if $p \in X$ has a non-orientable space of directions $\Sigma_p$, then no neighborhood of $p$ is orientable. We call such a space \emph{locally non-orientable}.

Petrunin \cite{Pet1} proved an analogue of Synge's Theorem for locally orientable Alexandrov spaces, which we recall here without the hypothesis of local orientability in even dimensions, a simple improvement which follows directly from the results in Section \ref{s:lno}.

\begin{GSTheorem}\label{t:GST} Let $X^n$ be an 
Alexandrov space with $\curv \geq 1$. 
\begin{enumerate}
\item If $X$ is even-dimensional and is either orientable or locally non-orientable
 then $X$ is simply connected, otherwise it has fundamental group $\zzz_2$.
\item If $X$ is odd-dimensional and locally orientable then $X$ is orientable.
\end{enumerate}
\end{GSTheorem}

The analogue of the Bonnet-Myers Theorem for general Alexandrov spaces is well known but could not be located by the authors elsewhere in the literature. For completeness, a proof is presented here.

\begin{GBonnetMyers}\label{t:BonnetMyers} Let $X$ be an 
Alexandrov space of $\curv \geq k >0$. Then 
$X$ is compact and has finite fundamental group.
\end{GBonnetMyers}

\begin{proof} Since an Alexandrov space of $\curv \geq k>0$ has diameter bounded above by $\pi/\sqrt{k}$ \cite{BGP}, it follows from local compactness that $X$ must  be compact. 
Since Alexandrov spaces are 
MCS spaces, they have universal covers (cf.\  \cite{BP}), and it is clear that the metric on $X$ induces a metric on the universal cover with the same lower curvature bound. The proof now proceeds just as in the Riemannian case.
\end{proof}

We also have the following 
important result for non-negatively curved
 Alexandrov spaces, which will be used throughout the text \cite{P}.

\begin{soulthm} \label{t:soulthm} Let $X$ be a compact Alexandrov space of $\curv\geq 0$ and suppose that $\d X\neq \emptyset$.
Then there exists a totally convex, compact subset $S\subset X$, called \emph{the soul} of $X$, with $\d S=\emptyset$, which is a strong deformation retract of $X$. If $\curv(X)>0$, then the soul is a point, that is, $S=\{s\}$.   

\end{soulthm}

The proof relies on the concavity of $\dist(\d X, \cdot)$. The gradient flow of the distance function on $X \setminus \d X$ is $1$-Lipschitz, and so can be extended to all of $X$. This flow plays the role of the Sharafutdinov retraction. Note that instead of using the distance from $\d X$ we may use the distance from any union of boundary faces (see \cite{W}). When $\curv (X)>0$ the boundary has only one component and is homeomorphic to $\Sigma_s$. 

Consider the special case where $X$ is the quotient space of an isometric group action on an Alexandrov space $Y$ with $\curv> 0$. Let $\pi:Y\rightarrow X=Y/G$ be the projection map, with $\d X\neq \emptyset$. We see that   $\d X\cong\Sigma_s\cong \nu_p/G_p$, where $p\in \pi^{-1}(s)$. Because the gradient flow preserves extremal subsets, it is clear that the orbit space has a somewhat conical structure. The non-principal orbit types are either $s$, contained within $\d X$, or stretch from $\d X$ to $s$.

\section{Ramified Orientable Double Covers}\label{s:lno}

Recall that Alexandrov spaces are unlike manifolds in that they can have arbitrarily small neighborhoods which do not admit an orientation. 

\begin{example} The spherical suspension of the projective plane, $\Sigma(\rrr P^2)$, does not admit a local orientation in a neighborhood of the cone points. Note that this space is simply connected, whereas a simply-connected manifold is always orientable.
\end{example}

We will use Alexander-Spanier cohomology with integer coefficients to study orientability, as it has certain advantages in the context of Alexandrov spaces (cf. \cite{GP}), and coincides with singular cohomology.

It is easy to see by excision 
that $H^n (X, X \setminus \{ p \}) \cong H^{n-1} (\Sigma_p)$. 
If $H^{n-1} (\Sigma_p) \cong \zzz$, then a choice of generator of $H^{n-1} (\Sigma_p)$ is a \emph{local orientation} at $p$. We will say that $X$ is \emph{locally orientable} if a local orientation can be chosen at each point $p \in X$.

We will define orientability of a compact Alexandrov space without boundary, $X$, 
in terms of the existence of a fundamental class. 
That is, as in the manifold case, $X$ is orientable if for every $x\in X$,  $H^{n-1}(X, X\setminus \{ p \}) \ra H^n(X)\cong \zzz$ is an isomorphism. For non-compact Alexandrov spaces without boundary we define orientability using cohomology with compact supports. If the space has boundary, we will use relative cohomology.

The main result of this section is that every non-orientable Alexandrov space can be obtained as the quotient of an orientable Alexandrov space by an isometric involution. Unlike in the manifold case, the involution is not required to be free. Locally non-orientable spaces arise where the involution has fixed points. We will develop the theory only for spaces without boundary, but it is easy to adapt the theory for spaces with boundary. It may help to begin by considering the simplest application of the theory.

\begin{example}The suspension on $\rrr P^2$ can be obtained as the quotient of $S^3$ by the suspension of the antipodal map on $S^2$.\end{example}

We will consider the matter of orientability for a more general class of topological spaces, which we refer to as \emph{non-branching MCS spaces}. By this we mean MCS spaces where the top stratum is a connected manifold, and where the neighborhood of a point is a cone on another non-branching MCS space, or on a two-point space.

Our definition of orientability and local orientability for a connected, non-branching MCS space without boundary is the same 
as that for an Alexandrov space without boundary, allowing for $\Sigma_p$ to now represent the compact 
connected MCS space whose cone gives us the conic neighborhood at $p\in X$, and for $T_p X$ to represent the cone itself.

\begin{lemma}\label{l:manifold}
Let $X^n$ be a non-branching MCS space of dimension $n \geq 2$ without boundary. $X$ is orientable if and only if the topological manifold $X^{(n)}$ is orientable.
\end{lemma}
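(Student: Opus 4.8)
The plan is to compare the orientation data of all of $X$ with that of its top stratum by routing both through the orientation double cover, after first pinning down how the singular set $S = X \setminus X^{(n)}$ sits inside $X$. The first step is to show that $S$ is closed of codimension at least $2$. A point in an $(n-1)$-dimensional stratum would have a conic neighborhood $\rrr^{n-1}\times K$ with $K$ the cone on a compact $0$-dimensional MCS space, i.e.\ on $j$ points: $j=1$ produces a boundary point (excluded by hypothesis), $j=2$ produces a point of $X^{(n)}$ (so there is no genuine $(n-1)$-stratum there), and $j\geq 3$ forces branching (forbidden). Hence there is no codimension-$1$ stratum and $\codim S \geq 2$. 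I would also record at this stage that, by the very definition of a non-branching MCS space, each link $\Sigma_p$ is again a non-branching MCS space without boundary (since $X$ has none) of dimension $n-1$; this sets up an induction on $n$.

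I would prove the equivalence by induction on $n$, with base case $n=2$ trivial: there every link is a closed connected $1$-manifold, hence a circle, so $X$ has no singular points and $X = X^{(n)}$. For the forward implication I restrict: if $X$ is orientable it is in particular locally orientable, and for $p \in X^{(n)}$ excision identifies $H^n(X, X\setminus\{p\})$ with $H^n(X^{(n)}, X^{(n)}\setminus\{p\})$, so a coherent choice of local orientations on $X$ restricts to one on the open dense manifold $X^{(n)}$; thus $X^{(n)}$ is orientable.

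The substance is the reverse implication, which I would split into local orientability and a global extension. Assume $X^{(n)}$ is orientable. For local orientability at a singular $p$, note that the manifold part $\Sigma_p^{(n-1)}$ of the link embeds as the open subset $\{(r,\xi) : r > 0,\ \xi \in \Sigma_p^{(n-1)}\} \cong (0,\infty)\times \Sigma_p^{(n-1)}$ of $X^{(n)}$ inside the conic neighborhood of $p$; as an open subset of an orientable manifold it is orientable, so $\Sigma_p^{(n-1)}$ is orientable, and the inductive hypothesis applied to the lower-dimensional link $\Sigma_p$ gives that $\Sigma_p$ is orientable, whence $H^{n-1}(\Sigma_p)\cong \zzz$ and $X$ is locally orientable at $p$. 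With local orientability in hand everywhere, the orientation double cover $\pi\colon \tilde X \to X$ is defined; $X$ is orientable iff $\tilde X$ is disconnected, and likewise $X^{(n)}$ is orientable iff $\pi^{-1}(X^{(n)})$ is disconnected. Since $\pi$ is a covering map, $\pi^{-1}(S)$ has codimension at least $2$ in the MCS space $\tilde X$; removing a closed set of codimension $\geq 2$ preserves path-connectedness (paths can be routed through the dense top stratum), so $\tilde X$ is connected iff its open dense subset $\pi^{-1}(X^{(n)})$ is connected. Chaining these equivalences gives $X$ orientable $\iff X^{(n)}$ orientable, completing the induction; this same double-cover argument, once local orientability is known, also re-proves the forward implication uniformly.

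The hard part will be the reverse direction. Establishing local orientability at the singular points is exactly where the inductive structure is essential: it converts orientability of the global top stratum into orientability of each individual link, via the identification of the link's manifold stratum with an open piece of $X^{(n)}$. The second pressure point is the global extension of the orientation across $S$, where the codimension-$\geq 2$ bound is indispensable — it is precisely this bound that fails at boundary points and at codimension-$1$ strata, which is why the hypotheses of no boundary and non-branching are built into the statement.
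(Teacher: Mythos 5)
Your local step --- exhibiting $\Sigma_p^{(n-1)}$ as the open subset $(0,\infty)\times\Sigma_p^{(n-1)}$ of the orientable manifold $X^{(n)}$ and then applying the inductive hypothesis to the link $\Sigma_p$ --- is exactly the inductive step of the paper's proof, and your elimination of codimension-one strata is also needed there (the paper asserts $\codim S\geq 3$, but $\geq 2$ is all the top-degree argument requires). Where you diverge is in the global mechanism. The paper works directly with its fundamental-class definition of orientability: using Alexander--Spanier cohomology with compact supports and the codimension bound it establishes
\bdm
H^n_c(X)\cong H^n_c(X,S)\cong H^n_c(X\setminus S)\cong H^n_c(X^{(n)}),
\edm
so that a fundamental class for $X$ exists if and only if one exists for $X^{(n)}$, and the only remaining work is the local induction you also carry out. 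You replace this with the orientation double cover and a connectivity argument.

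That replacement is where the gaps lie. First, for an MCS space you would need to prove that the set of local orientations really is a double \emph{covering} of $X$ (local constancy of $H^n(X,X\setminus\{q\})$ over a conic neighborhood), and, more importantly, that ``the double cover is disconnected'' is equivalent to the paper's definition of orientability, namely the existence of a class in $H^n_c(X)$ restricting to a generator of $H^n(X,X\setminus\{p\})$ at every $p$. For manifolds that equivalence is classical; for MCS spaces it is essentially the displayed isomorphism above, so the detour does not avoid the compactly-supported cohomology computation --- it conceals it. Second, the principle that removing a closed set of codimension $\geq 2$ preserves path-connectedness is false for general MCS spaces: in the wedge of two $n$-spheres the wedge point is a single singular point of codimension $n$, yet deleting it disconnects the space. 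What actually makes your routing argument work is the non-branching hypothesis applied recursively: every link has connected top stratum, so $U\cap X^{(n)}$ is connected and dense for every conic chart $U$, and \emph{that}, not the codimension, is the fact to cite. With these two points repaired your argument would go through, but as written the global half of the reverse implication is not yet a proof.
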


\begin{proof}
The advantage of Alexander-Spanier cohomology is that for any closed subset $A \subset X$, $H^n_c(X, A)\cong H^n_c(X\setminus A)$. 
Observe further that, as with Alexandrov spaces,
the codimension of the singular set $S = X \setminus X^{(n)}$ of a non-branching MCS space is at least 3.
Thus, as in \cite{GP}, the set $S$ is so small that
\bdm
H^n_c(X) \cong H^n_c(X, S)\cong H^n_c(X\setminus S)\cong  H^n_c(X^{(n)}).
\edm 

Given this, one implication is trivial, and it remains to show that if $X^{(n)}$ is an orientable manifold then $X$ is orientable. In order to do so, we must show that the generator of $H^n_c(X)$, which we identify with the fundamental class of $X^{(n)}$, induces local orientations at each point. We will do this by induction on the dimension of $X$.

The anchor of the induction is in dimension two, where the result holds trivially. We now assume the result is true for for dimensions $\leq n-1$ and 
will show the result holds for dimension $n$.
For any point $p$ in the orientable manifold $X^{(n)}$, the generator of $H^n_c(X^{(n)})$ maps to a generator of $H^n_c(X^{(n)}, X^{(n)} \setminus \{p\})$ as usual, giving a local orientation at $p$. If $p \in S$, then let $U$ be an open conic neighborhood of $p$, so that $U \cong T_p X$. The set $V = U \setminus S$ is then the cone on $\Sigma_p X \setminus \Sigma_p S$. The orientation of $X^{(n)}$ induces an orientation on the $(n-1)$-dimensional manifold $\Sigma_p X \setminus \Sigma_p S$, and by the inductive hypothesis on the space $\Sigma_p X$.
\end{proof}

\begin{theorem}\label{thm:Cover} Let $X$ be a non-branching MCS space of dimension $n$ without boundary which is non-orientable. 
Then there is an orientable MCS space $\tilde{X}_{\txt{Ram}}$ with the same dimension and with an involution 
$i$  such that $\tilde{X}_{\txt{Ram}} / i \cong X$. $\tilde{X}_{\txt{Ram}}$ 
is a ramified double cover of $X$, and the ramification locus is the union of those strata with non-orientable normal 
cones.\end{theorem}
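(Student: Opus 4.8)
The plan is to reduce the whole problem to the top stratum and then fill in the singular strata by induction on $n=\dim X$. By Lemma~\ref{l:manifold}, the non-orientability of $X$ means that the connected top-stratum manifold $X^{(n)}$ is non-orientable, so it carries a connected orientable double cover $q\colon \widehat{X^{(n)}}\to X^{(n)}$ with free, orientation-reversing deck involution $\tau$. The space $\tilde X_{\txt{Ram}}$ will be built as a filling-in of $\widehat{X^{(n)}}$ over $X$, so that $\widehat{X^{(n)}}$ is its top stratum; \emph{granting this}, orientability of $\tilde X_{\txt{Ram}}$ is immediate from Lemma~\ref{l:manifold}, and $\tau$ will extend to the required involution $i$. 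The decisive local computation is the following: for $p$ in a stratum $X^{(l)}$ with normal space of directions $L$ (so $\dim L=n-l-1$), a punctured conic neighbourhood is $U_p^{*}=U_p\cap X^{(n)}\cong \rrr^{l}\times(0,\infty)\times L^{(n-l-1)}$, which is homotopy equivalent to $L^{(n-l-1)}$. Hence the restriction of $q$ to $U_p^{*}$ is classified by $w_1(L^{(n-l-1)})$, and it is trivial exactly when $L$ is orientable, i.e.\ (by the argument of Lemma~\ref{l:manifold} applied to the normal cone $KL$) exactly when the normal cone is orientable, equivalently when $\Sigma_p=S^{l-1}*L$ is orientable.

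With this in hand I would construct the local models and fill in, inducting on $n$. By induction, every normal space $L$, having dimension less than $n$, has an orientation double cover: two disjoint copies of $L$ when $L$ is orientable, and the ramified orientable cover $\tilde L_{\txt{Ram}}$ when $L$ is non-orientable. Over a point $p\in X^{(l)}$ there are then two cases. If $L$ is orientable, the restricted cover $\widehat{U_p^{*}}$ is disconnected, and I adjoin \emph{two} points over $p$, each with neighbourhood homeomorphic to $U_p\cong \rrr^{l}\times KL$; here $\hat\pi$ is a genuine $2$-to-$1$ map and $i$ interchanges the two points. If $L$ is non-orientable, then $\widehat{U_p^{*}}\cong \rrr^{l}\times(0,\infty)\times \widehat{L^{(n-l-1)}}$ is connected, and, since $\widehat{L^{(n-l-1)}}$ is the top stratum of the compact space $\tilde L_{\txt{Ram}}$, its cone-completion --- i.e.\ filling in the cone on $\tilde L_{\txt{Ram}}$ --- adjoins a \emph{single} apex point over $p$, with neighbourhood homeomorphic to $\rrr^{l}\times K\tilde L_{\txt{Ram}}\cong K(S^{l-1}*\tilde L_{\txt{Ram}})$, a cone on the orientable MCS space $S^{l-1}*\tilde L_{\txt{Ram}}$. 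This is a ramification point, and $i$ fixes it. Gluing these models over $q$ produces $\tilde X_{\txt{Ram}}$.

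It then remains to collect the properties. The local models exhibit $\tilde X_{\txt{Ram}}$ as a non-branching MCS space of dimension $n$ (each point has a conic neighbourhood over a compact MCS space, either $\Sigma_p$ or $S^{l-1}*\tilde L_{\txt{Ram}}$, and the top stratum $\widehat{X^{(n)}}$ is a connected manifold). Since that top stratum is orientable, Lemma~\ref{l:manifold} gives orientability of $\tilde X_{\txt{Ram}}$. By construction $\hat\pi$ is $2$-to-$1$ over locally orientable points and $1$-to-$1$ precisely over the locally non-orientable points, which by the computation above are exactly the strata with non-orientable normal cones; this set is the ramification locus and equals $\Fix(i)$. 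Finally $\hat\pi$ descends to a continuous bijection $\tilde X_{\txt{Ram}}/i\to X$ which is a homeomorphism by compactness of the fibres, so $\tilde X_{\txt{Ram}}/i\cong X$.

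The hard part will be the filling-in and gluing step in the non-orientable case: proving that the connected double cover of the punctured neighbourhood is completed by adjoining exactly one apex point, by identifying its cone-completion with the cone on the inductively constructed $\tilde L_{\txt{Ram}}$, and checking that these local pieces patch together over $q$ into a single Hausdorff non-branching MCS space. This is precisely where the induction on dimension and the conic local structure carry the weight; once the local topology near the singular strata is pinned down, orientability and the identification of the quotient follow formally from Lemma~\ref{l:manifold}.
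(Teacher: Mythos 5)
Your proposal is correct in outline and shares the paper's germ --- pass to the orientation double cover of the connected top-stratum manifold $X^{(n)}$ and complete it over the singular set --- but it takes a genuinely different, and more explicit, route to the completion. The paper's proof is one sentence: it forms the disjoint union $(X\setminus X^{(n)})\cup\tilde{X}^{(n)}$, with a \emph{single} point over every singular point, and endows it with ``the pull-back topology induced by $\pi$,'' leaving all verification to the reader; read literally, that set-level description does not distinguish singular points with orientable normal cones from those with non-orientable ones, and it is exactly that dichotomy which makes the ramification locus equal to the strata with non-orientable normal cones rather than to the whole singular set. You supply the missing dichotomy: your computation that $q$ restricted to $U_p^{*}\simeq L^{(n-l-1)}$ is classified by $w_1(L^{(n-l-1)})$ is correct (and well posed, since singular strata have codimension at least $3$, so $\dim L\geq 2$ and $L^{(n-l-1)}$ is connected), and adjoining two points in the trivial case and one in the nontrivial case is precisely what is needed for $\tilde{X}_{\txt{Ram}}$ to be non-branching with the stated ramification locus. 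The induction on dimension giving the local model $\rrr^{l}\times K\tilde{L}_{\txt{Ram}}$ is sound, as the links are compact, connected, boundaryless non-branching MCS spaces of smaller dimension. The gluing step you defer is real but standard point-set work: taking as basic open sets the connected components of preimages of conic neighbourhoods, together with the adjoined points they accumulate on (the Fox spread completion of $\tilde{X}^{(n)}\to X^{(n)}\hookrightarrow X$), reproduces your one-or-two-point dichotomy automatically and yields a Hausdorff MCS space. What your approach buys is a proof that actually establishes the ramification-locus claim; what the paper's buys is brevity.
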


\begin{proof}
Let $X^{(n)}$ be the manifold part of $X$. By Lemma \ref{l:manifold} it is non-orientable, so it has 
an orientable double cover $\tilde{X}^{(n)}$. There is a natural projection $\pi$ from the disjoint union $(X \setminus X^{(n)}) \cup \tilde{X}^{(n)}$ to $X$, namely the sum of the identity map on $X \setminus X^{(n)}$ with the projection $\tilde{X}^{(n)} \ra X^{(n)}$. If we give the disjoint union the pull-back topology induced by $\pi$, then it is a ramified double cover with the required properties.
\end{proof}

In order to apply this technique to Alexandrov spaces we will need to add metric information. This can be done in a manner very similar to the analysis of branched coverings of spaces over an extremal knot carried out in Lemma 5.2 of \cite{GW2}.
A more general and detailed discussion of when the completion of an Alexandrov domain is an Alexandrov space is available in \cite{Petrunin}, or in the forthcoming work of Nan Li\footnote{In his presentation \emph{Volume rigidity on length spaces} at the Conference on Metric Geometry and Applications i.m. Jianguo Cao, at the University of Notre Dame in March 2013, Li announced a certain ``almost convexity" condition on Alexandrov domains which is equivalent to the completion being an Alexandrov space.}.

\begin{thma}Let $X$ be a non-orientable Alexandrov space without boundary of dimension $n$ and $\curv \geq k$. Then there is an orientable Alexandrov space $\tilde{X}_{\txt{Ram}}$ with the same dimension and lower curvature bound, and with an isometric involution $i$  such that $\tilde{X}_{\txt{Ram}} / i$
and $X$ are isometric. $\tilde{X}_{\txt{Ram}}$ is a ramified orientable double cover of $X$, and the ramification locus is the union of those strata in $X$ having non-orientable normal cones.\end{thma}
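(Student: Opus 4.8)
The plan is to prove the statement by induction on $n$, taking the topological ramified double cover $\pi : \tilde{X}_{\txt{Ram}} \to X$ already furnished by Theorem \ref{thm:Cover} and equipping it with the lifted length metric, which is the only candidate compatible with the requirement that $\pi$ restrict to a local isometry over the top stratum. Concretely, $\pi : \tilde{X}^{(n)} \to X^{(n)}$ is an honest two-sheeted covering of the (non-orientable, by Lemma \ref{l:manifold}) manifold part, so I would pull back the intrinsic length structure of $X^{(n)} \subset X$ to a length metric $\tilde{d}$ on $\tilde{X}^{(n)}$ making $\pi$ a local isometry. Since local isometries preserve local comparison and every point of $X^{(n)}$ has a neighborhood that is an Alexandrov domain with $\curv \geq k$, the space $(\tilde{X}^{(n)}, \tilde{d})$ is itself an Alexandrov domain with $\curv \geq k$. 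I then set $\tilde{X}_{\txt{Ram}}$ to be the metric completion of $(\tilde{X}^{(n)}, \tilde{d})$. The first task is to identify this completion, as a topological space, with the cover of Theorem \ref{thm:Cover}: near a stratum with orientable normal cone the covering over the punctured neighborhood is trivial, so each sheet completes to its own point (two lifts, unramified), while near a stratum with non-orientable normal cone the cover over the punctured neighborhood is connected, completing to a single point (one lift, the ramification). As the singular set is topologically singular and hence of codimension at least $3$, the added set is small and $\dim \tilde{X}_{\txt{Ram}} = n$.

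With the metric in hand the involution $i$ is the deck transformation of the cover, interchanging the two sheets over $X^{(n)}$ and extending continuously to the completion while fixing exactly the ramification locus. Because $\pi \circ i = \pi$ and $\pi$ is a local isometry, $i$ preserves $\tilde{d}$ on $\tilde{X}^{(n)}$ and hence, by continuity, on all of $\tilde{X}_{\txt{Ram}}$, so $i$ is an isometric involution. The projection $\pi$ is length-preserving and $i$-invariant, so it descends to a length-preserving bijection $\tilde{X}_{\txt{Ram}}/i \to X$; since $\pi$ is a local isometry off the fixed-point set, this bijection is an isometry, giving $\tilde{X}_{\txt{Ram}}/i = X$. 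Orientability of $\tilde{X}_{\txt{Ram}}$ is already supplied by Theorem \ref{thm:Cover}.

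The substance of the argument, and the main obstacle, is to show that the completed space is a genuine Alexandrov space with $\curv \geq k$, i.e. that the lower curvature bound survives the completion across the ramification locus. This does not follow formally: the completion of an Alexandrov domain need not be Alexandrov, and one must verify a condition such as Petrunin's, or the almost-convexity condition of Li, controlling the shortest paths that limit onto the added points. I would check this locally, following the branched-cover analysis of Lemma 5.2 of \cite{GW2}. Away from the ramification locus $\pi$ is a local isometry, so the local comparison $\curv \geq k$ is inherited directly. At a ramification point $\tilde{p}$ lying over a stratum $X^{(l)}$, a conic neighborhood of $\tilde{p}$ is the branched double cover $\rrr^l \times K(\tilde{L})$ of $\rrr^l \times K(L)$, where $L = \Sigma_p^{\perp}$ is the (non-orientable) normal link and $\tilde{L} \to L$ is its connected orientable double cover with the lifted metric. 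Since $\dim L = n - l - 1 < n$, the inductive hypothesis applies to $L$, so $\tilde{L}$ is an Alexandrov space with $\curv \geq 1$; hence $K(\tilde{L})$ has $\curv \geq 0$ and $\rrr^l \times K(\tilde{L})$ is an admissible tangent cone of dimension $n$. The base cases $n \leq 2$ are immediate, since then every normal link is at most one-dimensional and therefore orientable, so the ramification locus is empty and the cover is the classical (unramified) orientable double cover.

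It remains to upgrade this infinitesimal model to the local comparison required by the globalization theorem, and this is where the codimension bound does the essential work: the ramification locus is an extremal set of codimension at least $3$, so it does not locally separate and generic shortest paths avoid it, allowing the comparison for triangles in a neighborhood of $\tilde{p}$ to be deduced from the comparison in the branched-over neighborhood in $X$ together with the $\curv \geq 1$ bound on the lifted link $\tilde{L}$. Once the local bound $\curv \geq k$ is established at every point of $\tilde{X}_{\txt{Ram}}$, Toponogov's globalization theorem for complete length spaces promotes it to the global bound, which closes the induction and completes the proof.
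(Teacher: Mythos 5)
Your overall strategy is the same as the paper's: take the topological ramified double cover furnished by Theorem \ref{thm:Cover}, observe that only the metric statement remains to be proved, pull back the length metric so that the projection is a local isometry over the manifold part, complete, and verify the curvature bound by the branched-cover analysis of Lemma 5.2 of \cite{GW2} (with the completion criterion of \cite{Petrunin} in the background). The identification of the completion with the topological cover, the deck-transformation involution, the quotient isometry, and the base cases $n\leq 2$ are all fine, and the induction on the dimension of the normal link is a reasonable way to control the tangent cones at ramification points.

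The gap is at the one step that carries all the weight: showing that the comparison condition $\curv\geq k$ holds at and near the ramification locus $L$. You assert that because $L$ has codimension at least $3$, ``generic shortest paths avoid it,'' and deduce the comparison from the branched-over neighborhood. This does not follow from the codimension bound alone. The relevant statement is that the \emph{shadow} of $L$ from a point $q$ --- the set of points reachable from $q$ only by geodesics passing through $L$ --- has measure zero, and a small set can cast a large shadow if geodesics focus through it. The argument of \cite{GW2}, which the paper follows, breaks into three ordered steps: first prove Toponogov's theorem for hinges \emph{based at} points of $L$ (using the gradient exponential, suitably modified for $k\neq 0$); only then use this control on how geodesics spread after crossing $L$ to show the shadow has measure zero; and finally verify the concavity condition for distance functions on the complement of the shadow, which suffices by density. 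Your proposal omits the first step entirely, and without it the second is unsupported. Two further points: your appeal to $L$ being an extremal set of $\tilde{X}_{\txt{Ram}}$ is circular, since extremality presupposes the Alexandrov structure you are trying to establish (and the ramification locus is defined via the topological stratification, not the stratification by extremal sets); and knowing that the tangent cone at $\tilde{p}$ is $\rrr^l\times K(\tilde{L})$ with $\tilde{L}$ of $\curv\geq 1$ is necessary but not sufficient for the local distance comparison in a neighborhood of $\tilde{p}$ --- that is precisely the content of the hinge version of Toponogov at $\tilde p$ that must be proved, not assumed.
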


\begin{proof} The metric result is all that we need to show. When $X$ is compact and $k=0$, the proof is essentially the same as the proof given in  \cite{GW2}. However, in the case where $X$ is non-compact, or $k \neq 0$, we need to make slight alterations to the argument.

We first give an outline of their proof and then briefly comment on the changes required to prove the result in the above cases.

The proof for $X$ compact with $k=0$ is obtained in three steps. In the first step, they show that for any hinge in $\tilde{X}_{\txt{Ram}}$ based at a point $x\in L$, where $L$ is the ramification locus, Toponogov's Theorem holds. This allows for the second step, which is to show that the {\it shadow} of $L$ 
has measure zero, that is, for $q\in \tilde{X}_{\txt{Ram}}\setminus L$,  the set $$S_q=\{s\in \tilde{X}_{\txt{Ram}} : {\textrm{ every minimal geodesic }} qs {\textrm{ from }} q {\textrm{ passes through }} L\}$$ has measure zero.   The third and final step is to show that the distance function from $q$ satisfies the correct concavity condition. It is sufficient to do this on $\tilde{X}_{\txt{Ram}} \setminus S_q$, because $S_q$ has measure zero.

Now, if we consider the case where $k\neq 0$, we note that we must make the following changes to the proofs of each step.
In Step 1 of their argument 
we use the more refined versions of the gradient exponential map from \cite{Pet3} described in Section \ref{ss:qgrc}.

For Step 2, in order to show that the measure of the shadow is zero, we note that 
the estimates will change
according to the spherical and hyperbolic analogues of the law of cosines. Further,  if $\tilde{X}_{\txt{Ram}}$ is not compact, we can exhaust it by a countable union of balls centered at $q$.

In Step 3, we use different concavity conditions, depending on the type of lower curvature bound as in \cite{PP2} (see Section \ref{ss:basics}, Equation \ref{e:1}).
\end{proof}

We will refer to $\tilde{X}_{\txt{Ram}}$ as the \emph{ramified orientable double cover} of $X$.
We now present a lemma on lifting group actions to these ramified orientable double covers, and we will also use the covers to classify positively curved spaces in dimension 3.

\begin{lemma}\label{l:lift}Let $G$ be a connected Lie group acting by isometries on an $n$-dimensional non-orientable Alexandrov space $X$ without boundary. Let $\tilde{X}_{\txt{Ram}}$ be the ramified orientable double cover of $X$. Then the action of $G$ on $X$ lifts to an action of a $2$-fold covering group of $G$, $\tilde{G}$ on $\tilde{X}_{\txt{Ram}}$.\end{lemma}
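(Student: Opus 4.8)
The plan is to construct the covering group $\tilde G$ as the group of all isometric lifts of elements of $G$, and then to recognise it as a $2$-fold cover of $G$ using the Lie group structure of the isometry group. First I would record the structure of $\tilde{X}_{\txt{Ram}}$ coming out of Theorem A. Write $X^{(n)}$ for the (connected, non-orientable) top stratum of $X$ and $\pi : \tilde{X}_{\txt{Ram}} \ra X$ for the ramified cover. Over $X^{(n)}$ the map $\pi$ restricts to the genuine orientation double cover $\tilde X^{(n)} = \pi^{-1}(X^{(n)})$, which is connected precisely because $X^{(n)}$ is non-orientable, and $\tilde{X}_{\txt{Ram}}$ is the metric completion of $\tilde X^{(n)}$ for the pulled-back length metric. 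Since the canonical stratification is defined metrically, every $g \in G$ preserves it and hence restricts to an isometry of $X^{(n)}$.

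Next I would lift isometries one at a time. The orientation homomorphism $w : \pi_1(X^{(n)}) \ra \zzz_2$ is a homeomorphism invariant, so $g_*$ preserves $\ker w = \pi_* \pi_1(\tilde X^{(n)})$; hence $g|_{X^{(n)}}$ lifts, and because $\tilde X^{(n)} \ra X^{(n)}$ is a connected normal $\zzz_2$-cover there are exactly two lifts, interchanged by the deck involution (which is the restriction of the involution $i$ of Theorem A). Each lift is an isometry of $\tilde X^{(n)}$ and therefore extends uniquely to an isometry of the completion $\tilde{X}_{\txt{Ram}}$. Collecting all these lifts produces a group $\tilde G$ together with a surjection $q : \tilde G \ra G$ fitting into a central extension
\bdm
1 \ra \zzz_2 \ra \tilde G \xrightarrow{\;q\;} G \ra 1,
\edm
where $\ker q = \{\,\mathrm{id}, i\,\}$.

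Finally I would install the Lie structure and verify the action. By \cite{FY} the group $\Isom(\tilde{X}_{\txt{Ram}})$ is a Lie group; using that $G$ is compact one checks that $\tilde G$ is a closed (hence compact) subgroup, so it is itself a Lie group. The homomorphism $q$ is surjective with discrete central kernel $\zzz_2$, so it is a $2$-fold covering homomorphism, and connectedness of $G$ identifies $\tilde G$ as the relevant $2$-fold covering group (if the extension happens to split, then $G$ itself already acts). The desired lift is then the evaluation action $\tilde G \times \tilde{X}_{\txt{Ram}} \ra \tilde{X}_{\txt{Ram}}$, $(\tilde g, y) \mapsto \tilde g(y)$, which is continuous because $\Isom(\tilde{X}_{\txt{Ram}})$ acts continuously, is isometric, and satisfies $\pi(\tilde g \cdot y) = q(\tilde g)\cdot \pi(y)$, i.e.\ it covers the $G$-action.

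The main obstacle is that $\pi$ is only a genuine covering \emph{away} from the ramification locus, so the classical lifting criterion applies directly only on $\tilde X^{(n)}$; the point that makes the argument go through is that $\tilde{X}_{\txt{Ram}}$ is the metric completion of $\tilde X^{(n)}$ and that isometries extend uniquely across the ramification locus, which is exactly what upgrades the lift on the open dense part to a continuous isometric action on all of $\tilde{X}_{\txt{Ram}}$. A cleaner but essentially equivalent alternative would bypass the one-at-a-time lifting by lifting the combined map $G \times \tilde X^{(n)} \ra X^{(n)}$, $(g,\tilde y) \mapsto g\,\pi(\tilde y)$, through $\pi$; there connectedness of $G$ is essential, the obstruction lives in $\mathrm{Hom}(\pi_1(G),\zzz_2)$ via the orbit map, and passing to the double cover of $G$ killing that class yields the same $\tilde G$.
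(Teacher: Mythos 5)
Your proposal is correct and follows essentially the same route as the paper's own (much terser) proof: restrict the action to the top stratum $X^{(n)}$, lift it to the orientation double cover there via the classical covering-space criterion, and extend to all of $\tilde{X}_{\txt{Ram}}$ by density and the isometric nature of the action. Your additional details --- the invariance of the orientation homomorphism, the central $\zzz_2$-extension structure of $\tilde{G}$, and the appeal to \cite{FY} to make $\tilde{G}$ a Lie group --- are exactly the steps the paper leaves implicit.
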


\begin{proof}Since the $G$-action is isometric, it must preserve the stratification of $X$, and in particular act on the manifold $X^{(n)}$. The action may then be lifted to an action of a 2-fold covering group on the double cover of $X^{(n)}$, which is dense in $\tilde{X}_{\txt{Ram}}$. Since the action is by isometries it extends to all of $\tilde{X}_{\txt{Ram}}$.\end{proof}

The following classification results are an easy consequence of the theory of ramified orientable double covers. They have been obtained independently in \cite{GGG2}, and
also appear to  be known to others in the field \cite{PMO}.

\begin{proposition}\label{t:3dimclassification} The only closed, simply-connected Alexandrov spaces of positive curvature in dimension three are homeomorphic to either $S^3$ or $\Sigma \mathbb{R}P^2$.\end{proposition}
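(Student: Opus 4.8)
The plan is to reduce the statement to the topology of positively curved $3$-manifolds by means of the orientable double cover, treating the locally orientable and locally non-orientable cases separately. First I would record the local structure. For any $p \in X$ the space of directions $\Sigma_p$ is a closed $2$-dimensional Alexandrov space with $\curv \geq 1$ (it has no boundary since $X$ has none), hence a closed topological surface carrying a metric of positive curvature; by the Gauss--Bonnet estimate its Euler characteristic is positive, so $\Sigma_p$ is homeomorphic to either $S^2$ or $\rrr P^2$. Thus $p$ is a manifold point exactly when $\Sigma_p \cong S^2$, and is topologically singular, with $\Sigma_p \cong \rrr P^2$, otherwise. Because the topologically singular set has codimension at least $3$, in dimension $3$ it consists of finitely many isolated points, and at each such point the normal cone is $T_pX = K(\rrr P^2)$, which is non-orientable. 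These isolated points are therefore precisely the ramification locus to which Theorem A will apply.

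Next I would split into cases. If $X$ has no singular points it is a closed topological $3$-manifold; being simply connected it is orientable, and by the Poincar\'e conjecture it is homeomorphic to $S^3$. If $X$ does have singular points then it is locally non-orientable, and I would invoke Theorem A to pass to the ramified orientable double cover $M = \tilde{X}_{\txt{Ram}}$ together with its isometric involution $i$ satisfying $M/i = X$. The lift of each singular point is a fixed point of $i$ whose space of directions is the orientable double cover of $\rrr P^2$, namely $S^2$; hence every point of $M$ is a manifold point, so $M$ is a closed, orientable, positively curved $3$-manifold, while $\Fix(i)$ is exactly the finite set of lifts of the singular points of $X$, near each of which $i$ acts as $-\mathrm{id}$. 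The goal in this case is to show that $M \cong S^3$, that $\Fix(i)$ consists of exactly two points, and that $(M,i)$ is equivalent to $S^3$ with the suspension of the antipodal involution, whence $X = M/i \cong \Sigma \rrr P^2$.

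The main obstacle is precisely this final topological identification, which rests on $3$-manifold topology rather than on Alexandrov geometry. To see that $M \cong S^3$ I would remove the branch points: writing $M^\ast = M \setminus \Fix(i)$ and $X^\ast = X \setminus \{\text{singular points}\}$, the deleted points have codimension $3$ so $\pi_1(M^\ast) \cong \pi_1(M)$, and $M^\ast \to X^\ast$ is an honest double cover, exhibiting $\pi_1(M)$ as an index-$2$ subgroup of $\pi_1(X^\ast)$. Simple connectivity of $X$ controls $\pi_1(X^\ast)$ through van Kampen's theorem, since $X$ is recovered from $X^\ast$ by coning off the $\rrr P^2$ links; the delicate point is to extract from this that $\pi_1(M) = 1$, after which the Poincar\'e conjecture gives $M \cong S^3$. (Alternatively one may cite the elliptization theorem to realize $M$ as a spherical space form and argue that the deck group is trivial.) Once $M \cong S^3$, Smith theory applied to the $\zzz_2$-action forces $\Fix(i)$ to be a $\zzz_2$-homology $S^0$, hence exactly two points, and the classification of involutions of $S^3$ identifies $i$ with the suspension of the antipodal map, yielding $X \cong \Sigma \rrr P^2$. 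The genuinely hard inputs are therefore the control of $\pi_1$ of the branched cover and the equivariant uniqueness of the involution; everything Alexandrov-geometric is confined to the local analysis of the first paragraph and the construction of the cover via Theorem A.
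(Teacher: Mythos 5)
Your overall architecture is the same as the paper's: split into the manifold case (resolved by the Poincar\'e conjecture) and the non-manifold case, where the topologically singular points are isolated with link $\rrr P^2$; pass to the ramified orientable double cover $M$ of Theorem A; identify $M$ with $S^3$; and then use Smith theory together with the classification of involutions of $S^3$ with isolated fixed points to conclude $X \cong \Sigma\rrr P^2$. The local analysis in your first paragraph (which the paper leaves implicit) and your endgame are exactly right, and correspond to the paper's citations \cite{Sm,Li}.

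The gap is the step you yourself flag as delicate: proving $M \cong S^3$. The covering-space argument you sketch does not close. Van Kampen gives that $\pi_1(X^*)$ is normally generated by the link classes $g_1,\dots,g_k$, each of order two and orientation-reversing, and that $\pi_1(M) \cong \pi_1(M^*)$ is the kernel of the orientation character $w:\pi_1(X^*)\to\zzz_2$. This data alone does not force $\ker w$ to be trivial: $\zzz_2\times\zzz_2$ with $w$ nontrivial on both generators, or the infinite dihedral group, are normally generated by order-two elements of $w^{-1}(-1)$ and have nontrivial kernel, so even adding the finiteness of $\pi_1(M)$ from Bonnet--Myers the group theory is inconclusive. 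Some genuinely three-dimensional input is required. The paper's route --- your parenthetical alternative --- is to use elliptization to realize $M$ as a spherical space form and then invoke the scarcity of orientation-reversing involutions on lens spaces and Seifert-fibered space forms \cite{Kwun,NR}; note that ``argue that the deck group is trivial'' is precisely the content of those references and is not automatic. An equivalent repair is to lift to the universal cover $\tilde M = S^3$, apply Armstrong's theorem to see that the full group of lifts is normally generated by its fixed-point-having involutions, and observe that an orientation-reversing isometric involution of $S^3$ with isolated fixed points is linearly conjugate to $\mathrm{diag}(1,-1,-1,-1)$; either way, this step needs an argument beyond the one you give.
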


\begin{proof}If $X$ is a manifold, this follows from the resolution of the Poincar\'e Conjecture \cite{P1,P2,P3}. If $X$ is not a manifold, its ramified orientable double cover is $S^3$ \cite{Kwun, NR}, and the only isometric involution on $S^3$ which fixes only isolated points yields $\Sigma \rrr P^2$ \cite{Sm,Li}.\end{proof}

Because $\Sigma \mathbb{R}P^2$ does not admit deck transformations, we have the following corollary.

\begin{corollary} Let $X^3$ be a closed, three-dimensional, Alexandrov space of positive curvature which is not a manifold. Then 
$X^3$ is homeomorphic to $\Sigma \rrr P^2$.
\end{corollary}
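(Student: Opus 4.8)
The plan is to reduce to the simply-connected case already settled in Proposition \ref{t:3dimclassification} by passing to the universal cover. First, by the Generalized Bonnet--Myers Theorem \ref{t:BonnetMyers}, the space $X$ is compact with finite fundamental group. As an MCS space, $X$ admits a universal cover $\pi \colon \tilde X \ra X$, and since $\pi_1(X)$ is finite this is a finite-sheeted topological covering on which the deck group $\Gamma \cong \pi_1(X)$ acts freely. The lifted length metric has the same lower curvature bound, so $\tilde X$ is again a closed, simply-connected, three-dimensional Alexandrov space of positive curvature. Proposition \ref{t:3dimclassification} then gives $\tilde X \cong S^3$ or $\tilde X \cong \Sigma \rrr P^2$.

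I would next eliminate the sphere. If $\tilde X \cong S^3$, then $X \cong S^3/\Gamma$ is the quotient of a topological manifold by a free action of a finite group, hence itself a topological manifold --- contradicting the standing hypothesis that $X$ is not a manifold. Therefore $\tilde X \cong \Sigma \rrr P^2$, and it remains only to show that $\Gamma$ is trivial.

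This last point is exactly the assertion, highlighted in the remark preceding the statement, that $\Sigma \rrr P^2$ admits no nontrivial deck transformations, and it is the step I expect to require the most care. The cleanest route is to observe that $\Sigma \rrr P^2$ is rationally acyclic: since $\rrr P^2$ has trivial reduced rational homology, so does its suspension. Consequently every self-map of the compact space $\Sigma \rrr P^2$ has Lefschetz number $1$, and the Lefschetz fixed-point theorem forces every self-homeomorphism to have a fixed point. A nontrivial element of $\Gamma$ would then contradict the freeness of the deck action, so $\Gamma$ is trivial and $X = \tilde X \cong \Sigma \rrr P^2$.

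Everything apart from this rigidity is routine: the inheritance of dimension, compactness, simple connectivity and the curvature bound by $\tilde X$, and the fact that a free finite quotient of a manifold is a manifold, are all standard. If one prefers to avoid the Lefschetz theorem, the same conclusion follows from the multiplicativity $\chi(\tilde X) = |\Gamma|\,\chi(X)$ of the Euler characteristic under the finite cover, since $\chi(\Sigma \rrr P^2) = 1$ forces $|\Gamma| = 1$.
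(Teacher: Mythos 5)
Your proof follows essentially the same route as the paper, which deduces the corollary from Proposition \ref{t:3dimclassification} via the one-line observation that $\Sigma \rrr P^2$ admits no deck transformations (the $S^3$ alternative being excluded because a free finite quotient of a manifold is a manifold). Your Lefschetz/Euler-characteristic justification of that observation --- $\Sigma\rrr P^2$ is rationally acyclic, so $\chi(\Sigma\rrr P^2)=1$ and every self-homeomorphism has a fixed point --- is correct and simply fills in the detail the paper leaves unstated.
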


\section{Isotropy and the Slice Theorem}\label{s:slice}

The purpose of this section is to show that the isotropy action of a group on the space of normal directions at a point $p$ is equivalent to its action on any slice at $p$. 
The principal obstacle to showing this is in clarifying the relationship between the infinitesimal structure, that is, the space of directions, and the local structure of an Alexandrov space.

We will begin by constructing an analogue of the normal exponential map for a group orbit. We will next develop 
a more precise relationship between the local and infinitesimal structure via the Stability Theorem. Then we will introduce two ``blow-up" constructions, which combine information about the infinitesimal and local structures in a single topological space.

In the context of these blow-up spaces, the connection between an arbitrary slice and the normal space becomes clear, and  allows us to prove the Slice Theorem.

\subsection{Normal Bundles of Group Orbits} 

The following
proposition from \cite{GGS} uses the Join Lemma \ref{l:join} to describe the tangent and normal spaces to an orbit of an isometric group action.

\begin{proposition}\label{p:normal_space} Let $X$ be an Alexandrov space admitting an isometric action of a compact Lie group $G$ and fix $p\in X$ with $\dim(G/G_p)>0$. If $S_p\subset \Sigma_p$ is the unit tangent space to the orbit $G(p)\cong G/G_p$,  then the following hold. 
\begin{itemize}
  \item[(1)] $S_p$ is isometric to the unit round sphere.
  \item[(2)] The set $\nu(S_p)$ is a compact, totally geodesic Alexandrov subspace of $\Sigma_pX$ with 
  $\curv \geq 1$,  and the space of directions $\Sigma_p X$ is isometric to the join $S_p * \nu(S_p)$ with the standard join metric.
  \item[(3)] Either $\nu(S_p)$ is connected or it contains exactly two points at distance $\pi$.
\end{itemize} 
\end{proposition}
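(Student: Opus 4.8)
The architecture I would use reduces parts (2) and (3) to the Join Lemma (Lemma \ref{l:join}), so that the genuine work lies entirely in part (1). The plan is first to establish that $S_p$ is an isometrically embedded unit round sphere inside $\Sigma_p X$, and then to feed this into the Join Lemma. This is the natural framing because $\Sigma_p X$ is itself a compact Alexandrov space of dimension $n-1$ with $\curv \geq 1$, which is precisely the hypothesis the Join Lemma demands of the ambient space.

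For (1) the plan is as follows. The orbit $G(p) \cong G/G_p$ is compact and carries a transitive action of $G$ by isometries for its induced length metric, hence is a homogeneous length space; by Berestovskii's theorem \cite{Be} it is therefore a Riemannian homogeneous manifold of dimension $d = \dim(G/G_p) > 0$, and the intrinsic space of directions of a Riemannian manifold at any point is the unit round sphere $S^{d-1}_1$. It then remains to identify $S_p$ — the directions tangent to the orbit, measured with the angle metric inherited from $\Sigma_p X$ — with this intrinsic round sphere. I would do this by showing that the curves $t \mapsto \exp(tA)\cdot p$, for $A$ in the Lie algebra $\mathfrak{g}$ of $G$, have well-defined initial directions, that the assignment descends to a surjection from the unit sphere of $\mathfrak{g}/\mathfrak{g}_p$ (equipped with the $\mathrm{Ad}(G_p)$-invariant inner product inducing the orbit metric) onto $S_p$, and that this map preserves angles. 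I expect this last identification to be the main obstacle: one must reconcile the extrinsic angle metric from $\Sigma_p X$ with the intrinsic Riemannian tangent-sphere metric of the orbit. The leverage is that $G$ acts by isometries, so $G_p$ acts on $\Sigma_p X$ by isometries preserving $S_p$, and a first-variation computation controls the angles between orbit directions linearly by the inner product; because the orbit is equidistantly foliated by its $G$-translates, no curvature correction enters at first order, which forces the metric on $S_p$ to be exactly the round one.

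Granting (1), part (2) is immediate from the Join Lemma: since $S_p = S^{d-1}_1$ sits isometrically inside the $(n-1)$-dimensional $\curv \geq 1$ space $\Sigma_p X$, the lemma yields an isometry $\Sigma_p X = S_p * \nu(S_p)$ with the standard join metric, where $\nu(S_p)$ is an isometrically embedded Alexandrov space with $\curv \geq 1$ of dimension $(n-1)-(d-1)-1 = n-d-1$; it is compact because $\Sigma_p X$ is, and it is totally geodesic as a factor of the spherical join. Finally, for (3), $\nu(S_p)$ is a compact Alexandrov space with $\curv \geq 1$: if its dimension is positive it is connected by the very definition of an Alexandrov space, while if it is zero-dimensional it consists of at most two points, and in the two-point case $\curv \geq 1$ together with the requirement that $S_p * \nu(S_p)$ be the spherical join realizing $\Sigma_p X$ forces the two points to lie at distance $\pi$, so that the join is the spherical suspension of $S_p$. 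This exhausts the possibilities and completes the plan.
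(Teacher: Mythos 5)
The paper itself does not prove this proposition: it is imported verbatim from \cite{GGS}, with only the remark that the proof there ``uses the Join Lemma'' to describe the tangent and normal spaces to the orbit. Your reduction of parts (2) and (3) to Lemma \ref{l:join} is therefore exactly the route the paper points to, and those parts of your plan are essentially sound. One small repair in (3): the claim that the two points lie at distance $\pi$ should be justified directly --- any path in $S_p * \nu(S_p)$ joining the two normal directions must cross $S_p$, which lies at distance $\pi/2$ from each of them, and the diameter is at most $\pi$ --- rather than by appeal to ``the requirement that the join realize $\Sigma_p X$,'' which as stated is circular.

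The genuine gap is in part (1), which is the only part with real content, and it sits exactly where you say you expect the main obstacle to be. Two problems. First, Berestovski{\u\i}'s theorem \cite{Be}, as used elsewhere in this paper, applies to homogeneous \emph{Alexandrov} spaces; for a general locally compact homogeneous length space it yields a Carnot--Carath\'eodory--Finsler metric, not a Riemannian one, and the orbit $G(p)$ with the length metric induced from $X$ is not known to carry a lower curvature bound. So even the preliminary claim that the orbit's \emph{intrinsic} space of directions is a round sphere is not delivered by the result you cite. Second, and more importantly, $S_p$ carries the angle metric of $\Sigma_p X$ --- extrinsic angles measured in $X$ --- and identifying this with any intrinsic tangent sphere of the orbit is precisely the assertion of (1). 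Your proposal defers this to ``a first-variation computation'' with the remark that ``no curvature correction enters at first order''; that is a heuristic, not an argument. What actually has to be shown is (a) that the curves $t \mapsto \exp(tA)\cdot p$ have well-defined directions at $p$ (they are Lipschitz, not geodesics, so this is not automatic in an Alexandrov space), and (b) that the comparison angle between two such directions equals the Euclidean angle in $\mathfrak{g}/\mathfrak{g}_p$ for a single $\mathrm{Ad}(G_p)$-invariant inner product --- equivalently, that the rescaled orbit $\frac{1}{r}\bigl(G(p),p\bigr)$ converges to a Euclidean subcone of $T_p X$. Until (a) and (b) are carried out (via the first variation formula for distance functions and Lipschitz control on the orbit map $g \mapsto gp$), the proposal assumes what the proposition asserts.
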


From now on, we will simply write $\nu_p$ for $\nu(S_p)$, and $K \nu_p$ for the cone on $\nu_p$. The union of all  normal spaces $\nu_q$ for $q \in G(p)$ is topologized in an obvious way by the action of $G$, giving a normal bundle to the orbit $G(p)$, $G \times_{G_p} K \nu_p$. Just as the exponential map is generalized to the gradient exponential, we now wish to generalize the normal exponential map.

\begin{definition}[\cite{AKP}, compare Definition \ref{d:gexp}]
Let $X$ be an Alexandrov space and let $A \subset X$ be a closed subset, and let $p \in A$. Let $f = \frac{1}{2} \dist(A,\cdot)^2$. Let $i_s: sX \ra X$ denote the canonical dilation map, and let $\Phi^t_f$ denote the gradient flow of $f$ for time $t$. Consider the convergence $(e^t X , p) \ra (T_p X, o)$ as $t \ra \infty$. The \emph{$A$-gradient exponential} at $p$, $\gexp^A_p : T_p X \ra X$ is given by $\lim_{t \ra \infty} \Phi^t_f \circ i_{e^t}$, provided this limit exists.
\end{definition}

If we compare this definition to Definition \ref{d:gexp} and consider the arguments (originally from \cite{Pet3}) given there for the existence of a limit, we see that there is little change. We have the same control on the Lipschitz constants and hence a partial limit always exists. However, the equality $\Phi^t_f \circ \gexp_p(v) = \gexp_p(e^t v)$ can fail, and so the uniqueness of the limit is not guaranteed. 

In \cite{AKP}, the situation where $p \in A$ is an isolated point of $A$ is considered. For points $q$ near $p$ we have $|qA| = |qp|$. The equality 
holds and the map $\gexp^A_p: T_p X \ra X$ is defined near the origin of the tangent cone.

However, this is not the only situation in which such a gradient exponential map can be defined. Let $\pi:  X \ra Y$ be a submetry. For $p \in X$, let $A = \pi^{-1}(\pi(x))$ be the fiber through $p$ and let $H_p$ be the set of points horizontal to $p$, defined by $q \in H_p$ precisely when $|qp| = |qA|$. By \cite{Lyt}, as $\frac{1}{r} (X,p) \ra (T_p X, o)$ we have $\frac{1}{r} (H_p,p) \ra K \nu_p$, where $\nu_p$ is the space of horizontal directions at $p$. It follows that the map $\gexp^A_p$ exists if its domain  is restricted to the horizontal cone $K \nu_p$.

Now, in the particular case where $G$ acts on $X$, we set $A = G(p)$ and define a gradient exponential on $K \nu_p$ so that every horizontal ray maps to an $A$-radial curve starting at $p$. This extends to a map defined on the entire normal bundle to $G(p)$, $\gexp_{G(p)} : G \times_{G_p} K \nu_p \ra X$, which we call the \emph{normal gradient exponential} map.

\subsection{Stability Theorem}

 The Stability Theorem \ref{t:stability} guarantees the existence of homeomorphisms from $T_p X$ to small metric neighborhoods of $p$ which will serve as Gromov-Hausdorff approximations witnessing the convergence $\frac{1}{r} B_r(p) \ra B_1(o) \subset T_p X$, and these homeomorphisms can be required to preserve the distance from the origin. This result has been refined 
in \cite{K} to show that the homeomorphism can also be assumed to preserve an extremal set. In fact, as we note in Theorem \ref{t:relstability}, we can assume that the homeomorphism will preserve all extremal sets. 
 
 However, requiring that a homeomorphism be a Gromov-Hausdorff approximation controls only its coarse structure. There is very little control on the homeomorphism near the origin. We will construct, for small $r > 0$, a homeomorphism $\frac{1}{r} B_r(p) \ra B_1(o) \subset T_p X$ which has the property that  
when restricted to smaller balls it becomes a better Gromov-Hausdorff approximation. This fine control improves our understanding of the connection between the local and infinitesimal structure.
 
 We begin with a simple generalization of Kapovitch's Relative Stability Theorem \cite{K}.

 \begin{theorem}\label{t:relstability}
 Let $X^n_i$ be a sequence of Alexandrov spaces with curvature uniformly bounded from below, converging to an Alexandrov space $X^n$ of the same dimension. Let $\theta_i: X \ra X_i$ be a sequence of $o(i)$-Hausdorff approximations. Let $A$ be a finite set, and let $\mathcal{E}_i = \{ E^{\alpha}_i \subset X_i \}_{\alpha \in A}$ be a family of extremal sets in $X_i$ indexed by $A$. If $\mathcal{E}_i$ converges to a family of extremal sets $\mathcal{E}$ in $X$, then for all large $i$ there exist homeomorphisms $\theta'_i : (X, \mathcal{E}) \ra (X_i, \mathcal{E}_i)$, $o(i)$-close to $\theta_i$.
 \end{theorem}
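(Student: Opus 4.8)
The plan is to follow Kapovitch's proof of the Relative Stability Theorem \cite{K} essentially verbatim, but to carry the \emph{entire} finite family $\mathcal{E}$ through the argument rather than treating the single-set version as a black box. The structural facts that make this possible are that finite intersections and unions of extremal sets are again extremal, so that $\mathcal{E}$ generates a finite lattice $\mathcal{L}$ of extremal sets under $\cup$ and $\cap$, and that for each $p$ the space of directions $\Sigma_p E^\alpha$ of an extremal set containing $p$ is itself extremal in the lower-dimensional space $\Sigma_p$. Since each $E^\alpha_i$ is closed, near a point $p \in X$ only those members of the family with $p \in E^\alpha$ are locally relevant; the rest lie a positive distance away and may be ignored within a small ball.

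First I would set up the induction on $n = \dim X$, with the case $n = 0$ trivial. For the inductive step, fix $p \in X$ and pass to the conic chart $T_p X = K\Sigma_p$ furnished by the Stability Theorem \ref{t:stability}. The locally relevant sets $\{ \Sigma_p E^\alpha : p \in E^\alpha \}$ form a finite family of extremal sets in $\Sigma_p$, and their convergence is inherited from the convergence $\mathcal{E}_i \ra \mathcal{E}$. Applying the inductive hypothesis --- the family version of the theorem one dimension lower --- produces homeomorphisms of spaces of directions that preserve the whole family simultaneously, and coning these off gives local homeomorphisms on conic charts that respect every member of $\mathcal{L}$ passing through $p$. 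I would then assemble these local homeomorphisms into a global one exactly as in Perelman's and Kapovitch's scheme, gluing the conic charts by means of admissible functions and a partition of unity subordinate to a finite cover, all the while keeping the map $o(i)$-close to $\theta_i$.

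The main obstacle is the gluing step. In Kapovitch's construction one must arrange that the local homeomorphisms agree on overlaps and patch together into a genuine homeomorphism, and the delicate point here is to do this while \emph{simultaneously} preserving all the extremal sets in the family together with their intersections. This is precisely where closure of $\mathcal{L}$ under intersection is needed: it guarantees that the stratification induced by the family is compatible across charts, so that the strata match up and the partition-of-unity interpolation never moves a point off of any extremal set to which it belongs. Once one checks that the convergence $\mathcal{E}_i \ra \mathcal{E}$ supplies the requisite Gromov-Hausdorff control on each stratum of $\mathcal{L}$, the interpolation argument of \cite{K} applies with only cosmetic changes, and the resulting $\theta'_i$ preserve the full family $\mathcal{E}_i$.
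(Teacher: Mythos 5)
Your proposal takes essentially the same route as the paper, which likewise gives no detailed argument and simply observes that Kapovitch's proof of the single-set case adapts to finite families because Siebenmann's relative deformation results are already formulated for the family of all strata of a stratified space. The one technical point the paper singles out---that the set $S$ of fiberwise maxima of the admissible function $h$ producing the local conical structure is contained in every $E \in \mathcal{E}_p$---is the same compatibility issue you identify at the gluing step, so your sketch is consistent with the intended argument.
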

 
 We omit the proof of this result. The reader will find it a straightforward exercise to modify the results of \cite{K}, which corresponds to the case that the index set $A$ is a singleton, by taking advantage of the fact that the relative versions of the necessary results of Siebenmann \cite{Si} are developed for the family of all stratified subsets of a stratified space. Let $\mathcal{E}_p$ be the family of $E \in \mathcal{E}$ such that $p \in E$. Then, if $f: X \ra \rrr^k$ is a regular map incomplementable at $p$, there is an admissible map $h$ which, in a neighborhood of $p$, has a unique maximum on each fiber of $f$, providing the conical structure. The key point in adapting the proof is to note that if $S$ is the set of all such maxima, then $S \subset E$ for any $E \in \mathcal{E}_p$.

Next we establish the improved stability homeomorphism from the tangent cone to the ball.

\begin{theorem}\label{t:impstab}
Let $X$ be an Alexandrov space, and let $p \in X$. Then for some $r_0 > 0$ there is a homeomorphism $\theta: B_1(o) \ra \frac{1}{r_0}B_{r_0}(p)$ from the unit ball in the tangent cone to the rescaled ball of radius $r_0$, and there is a function $\epsilon: [0,r_0]  \ra \rrr_{\geq 0}$ with $\lim_{t \to 0}\frac{\epsilon(t)}{t}=0$ such that:
\begin{enumerate}
\item[(i)]$\theta$ preserves the distance from the center of the ball;
\item[(ii)]the pre-image of an extremal set under $\theta$ is extremal; and
\item[(iii)]for any $r \leq r_0$ the restriction of $\theta^{-1}$ to $\frac{1}{r_0}B_r(p)$ is $\epsilon(r)$-close to the rescaled logarithm map and is an $\epsilon(r)$-Gromov-Hausdorff approximation.
\end{enumerate}
\end{theorem}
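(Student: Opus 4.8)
The plan is to build $\theta$ scale by scale and then patch the pieces together, exploiting the metric self-similarity of the tangent cone $T_pX$. Fix a ratio $\lambda \in (0,1)$ and scales $r_k = r_0\lambda^k$ for $k \ge 0$, with $r_0 > 0$ to be chosen small. For each $k$ the rescaled logarithm $r_k \cdot \log_p$ is a Hausdorff approximation witnessing $(\frac{1}{r_k}X, p) \ra (T_pX, o)$ (Section \ref{ss:qgrc}), and feeding it into the Relative Stability Theorem \ref{t:relstability} --- applied to the family of all extremal sets meeting a neighborhood of $p$, which converge to the corresponding extremal sets of the cone --- produces a homeomorphism of the closed annulus $\{\,\lambda \le |v| \le 1\,\} \subset T_pX$ onto $\{\,x : \lambda \le |px| \le 1\,\} \subset \frac{1}{r_k}X$ that is $\delta_k$-close to $r_k\cdot\log_p$, preserves the distance from the center, carries extremal sets to extremal sets, and is a $\delta_k$-Gromov-Hausdorff approximation, with $\delta_k \to 0$ as $k \to \infty$. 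Rescaling the $k$-th map by $\lambda^k$ turns it into a homeomorphism of the cone annulus $A_k = \{\,\lambda^{k+1} \le |v| \le \lambda^k\,\}$ onto the corresponding annulus of $\frac{1}{r_0}B_{r_0}(p)$, with relative error still governed by $\delta_k$.

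The heart of the argument, and the step I expect to be the main obstacle, is to glue these annular homeomorphisms into a single homeomorphism of $B_1(o)$. On a shared boundary sphere $\{|v| = \lambda^k\}$ the maps induced from $A_{k-1}$ and from $A_k$ need not coincide, but for large $k$ they are two stratified homeomorphisms onto (almost) the same sphere that are $O(\delta_k)$-close to one another. I would reconcile them by an isotopy supported in a thin collar of the sphere, invoking the local contractibility and isotopy-extension properties of the homeomorphism group of a compact MCS space --- the same Siebenmann-type results \cite{Si} that underlie \cite{K}, taken in their stratified (relative) form so that every extremal set is respected throughout the deformation. Choosing the collars to shrink with $k$ confines the correction on $A_k$ to size $O(\delta_k)$, so that the fine control is not destroyed by the patching. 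For the finitely many outermost annuli (small $k$) no such control is needed, since (iii) is only asserted in the limit $r \to 0$; there any center- and extremal-set-preserving homeomorphism suffices.

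It remains to package the estimates and treat the center. Sending the apex $o$ to $p$ extends $\theta$ over all of $B_1(o)$, and since $\delta_k \to 0$ this extension is continuous at $o$ and is a homeomorphism, not merely a bijection. For $r$ with $\lambda^{k+1}r_0 \le r \le \lambda^k r_0$, the restriction of $\theta^{-1}$ to $\frac{1}{r_0}B_r(p)$ meets only the annuli $A_j$ with $j \ge k$, on each of which it is, after rescaling, $\delta_j$-close to the rescaled logarithm and a $\delta_j$-approximation; setting $\epsilon(r) = C\,r\,\sup_{j \ge k}\delta_j$ (and $\epsilon(0)=0$) for a fixed constant $C$ absorbing the collar corrections then gives a function with $\epsilon(t)/t \to 0$ as $t \to 0$ that validates (iii), including closeness to the rescaled logarithm, which is built in rather than derived. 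Property (i) holds by construction, property (ii) is preserved at every stage by the relative form of stability and of the gluing, and this completes the proof.
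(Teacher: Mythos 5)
Your construction is sound and is, at bottom, the same multi-scale strategy as the paper's, but the two proofs organize the scales and perform the gluing differently. The paper does not decompose the ball into disjoint annuli: it produces, for each scale $r_n = r_0 2^{-n}$, a stability homeomorphism $h_{r_n}$ of the \emph{whole} ball, and builds a telescoping sequence $\theta_n$ by gluing the rescaled $\theta_{n-1}$ (restricted to $V=\bar{B}_{5/8}(o)^c$) to $h_{r_n}$ (restricted to $U=B_{7/8}(o)$) on the overlapping open annulus, using the gluing theorem from the proof of the Stability Theorem, which only requires the two maps to be close to a common Hausdorff approximation on the overlap; the limit exists because $\theta_n$ is eventually constant near every point other than the apex. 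Your version instead tiles $B_1(o)$ by closed annuli meeting in codimension-one spheres and reconciles adjacent maps by a collar isotopy, which forces you to invoke Siebenmann's local contractibility/isotopy-extension results for the compact MCS space $\Sigma_p$ directly, rather than through the packaged gluing theorem. That is legitimate --- the two gluing devices rest on the same results of \cite{Si} --- but you should make two points explicit: first, the hedge ``(almost) the same sphere'' is unnecessary and should be removed, since both annulus maps preserve the distance from the center and therefore carry $\{|v|=\lambda^k\}$ onto exactly the same metric sphere, which is what makes boundary gluing possible at all; second, the correcting isotopy must itself be stratified, preserve extremal sets, and preserve the radial coordinate --- this works because the cone annulus is a product $\Sigma_p\times[\lambda^{k+1},\lambda^k]$ and extremal subsets of $T_pX$ are cones on extremal subsets of $\Sigma_p$, so the correction can be performed entirely in the $\Sigma_p$ factor. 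What the paper's overlapping-open-set telescope buys is robustness (no exact matching on a codimension-one set is ever needed) and a shorter appeal to already-established machinery; what your annulus decomposition buys is a more transparent tiling picture and error bookkeeping, at the cost of a further excursion into the stratified isotopy theory.
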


\begin{proof}
Recall that the logarithm map is a Gromov-Hausdorff approximation from a rescaled metric ball centered at $p \in X$ to the unit ball in $T_p X$ \cite{BBI}. By the Stability Theorem \ref{t:relstability}, as $\frac{1}{r} (X,p) \ra T_p X$, for each small $r$ less than some $r_0$ we can construct homeomorphisms $h_r^{-1}: \frac{1}{r} B_r(p) \ra B_1(o) \subset T_pX$ which approximate the rescaled logarithm map and respect the distance from the origin as well as extremal subsets. For some function $\alpha(t)$ with $\lim_{t \to 0} \alpha(t)=0$, $h_r^{-1}$ is $\alpha(r)$-close to the rescaled logarithm. We will construct $\theta$ by gluing together the various $h_r$.

Define an open cover of $T_pX$ by two sets: $U = B_{7/8}(o)$ and $V = \bar{B}_{5/8}(o)^c$. The gluing theorems used in the proof of the Stability Theorem provide that if we have a $\delta$-Gromov-Hausdorff approximation $B_1(o) \ra Y$ to some space $Y$ which is $\delta$-close to homeomorphisms defined on $U$ and on $V$, we may glue these to give a homeomorphism which is $\kappa(\delta)$ close to the Gromov-Hausdorff approximation for some function $\kappa$ with $\lim_{t \to 0}\kappa(t)=0$. Furthermore, if the initial homeomorphisms respect the distance from $o$, the glued homeomorphism can be chosen to do the same.

Pick a sequence $r_n = r_0 2^{-n}$. Let $n(t)$ be the greatest integer which is $\leq -\log_2 (t/r_0)$ and then define $\epsilon(t) = 2^{-n(t)}\kappa(2\alpha(r_{n(t)-1}))$. 
We will now construct $\theta$ inductively.

 We define the map $\theta_0 = h_{r_0}$. It is an $\alpha(r_0)$-approximation. Assume that the map $\theta_{n-1} : B_1(o) \ra \frac{1}{r_0} B_{r_0}(p)$ has been defined and is an $\epsilon(t)$-approximation to the logarithm on balls of radius $t \geq 2^{-n}$ and a $2^{-(n-1)}\alpha(r_{n-1})$-approximation on the ball of radius $2^{-n}$. Assume further that it respects the distance from the center. Consider the rescaled map $\theta_{n-1} : B_{2^n}(o) \ra \frac{1}{r_n}B_{r_0}(p)$, and restrict this map to $V$. Then we can glue this map to the restriction of $h_{r_{n}}$ to $U$ while respecting the distance from the center, and the inverse of the resulting map will be $\kappa(2\alpha(r_{n-1}))$-close to the rescaled logarithm map, while being $\alpha(r_n)$-close on the ball of radius $1/2$.

Shrinking back down, we have defined a new homeomorphism $\theta_{n} : B_1(o) \ra \frac{1}{r_0} B_{r_0}(p)$. Outside of $B_{7r_n/8}(o)$ we have $\theta_n = \theta_{n-1}$ while inside $B_{5r_n/8}(o)$ we have $\theta_n = h_{r_{n}}$, after rescaling. Therefore, as an approximation of the logarithm, the accuracy of $\theta^{-1}_n$ is $\epsilon(t)$ on balls of radius $t \geq r_n/2$, and on the ball of radius $r_n/2$ it is a $2^{-n}\alpha(r_n)$-approximation.

For any point $q \neq o$, there is a small neighborhood of $q$ on which $\theta_n = \theta_{n+1}$ for sufficiently large $n$, so the limit map is a well-defined homeomorphism satisfying the conditions.
\end{proof}
 
 \subsection{Blow-up Construction}

\begin{definition}Let $X$ be an Alexandrov space, and let $p \in X$. The \emph{blow-up} of $X$ at $p$ is defined to be the space $(\Sigma_p X \times [0,1]) \cup_{\phi} X \setminus \{ p \}$ where $\phi : \Sigma_p X \times (0,1] \ra X$ is given by $\phi(v,t) = \gexp_p(tv)$. We will denote it by $\blowX$.\end{definition}

The blow-up $\blowX$ is the result of removing the point $p$ and replacing it with the space of directions, $\Sigma_p$. Each direction is added as the limit point of the corresponding radial curve starting at $p$ in that direction. We will refer to each such point as a \emph{blow-up point}. 

Note that the blow-up is a purely topological construction. It carries no natural metric. However, it does have a natural partition arising from the stratified structure of $X$. Given an extremal set $E$ containing $p$, the closure of $E$ in $\blowX$ is $E \cup \Sigma_p E$, and we let this be a part of $\blowX$. Observe that it is possible that additional extremal sets might arise in $\Sigma_p$ which are not related to extremal sets in $X$, but these are not relevant for our considerations. 

\begin{proposition}
Let $X$ be an Alexandrov space, $p \in X$, and $r_0 > 0$ be small. Then $\Sigma_pX \times [0,1]$ is homeomorphic to the blow-up of the closed ball $B_{r_0}(p)$. The natural partition of the blow-up is given by the product of a stratification of $\Sigma_pX$, which may be coarser than the 
stratification by extremal sets, with the interval $[0,1]$.
\end{proposition}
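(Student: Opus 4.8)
The plan is to promote the improved stability homeomorphism of Theorem~\ref{t:impstab} to a homeomorphism of blow-ups. I would first pin down the model side. The tangent cone $T_pX$ is by definition the Euclidean cone $K\Sigma_pX$, and in a cone the radial curves emanating from the apex $o$ are exactly the cone rays: $\gexp_o(tv)$ is the point at distance $t$ from $o$ in the direction $v\in\Sigma_pX$. Hence the gluing map $\phi$ in the blow-up of $B_1(o)$ is a genuine homeomorphism $\Sigma_pX\times(0,1]\ra B_1(o)\setminus\{o\}$, and adjoining the slice $\Sigma_pX\times\{0\}$ of blow-up points simply recovers the product. So the blow-up of $B_1(o)\subset T_pX$ is canonically $\Sigma_pX\times[0,1]$, with the blow-up points forming $\Sigma_pX\times\{0\}$. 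After rescaling the metric (which does not affect homeomorphism type), I may identify the blow-up of $B_{r_0}(p)$ with that of $\frac1{r_0}B_{r_0}(p)$.

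Next I would transport along $\theta$. Let $\theta:B_1(o)\ra\frac1{r_0}B_{r_0}(p)$ be the homeomorphism of Theorem~\ref{t:impstab} for small $r_0$. Restricting to the complements of the centers gives a homeomorphism $\theta:B_1(o)\setminus\{o\}\ra\frac1{r_0}B_{r_0}(p)\setminus\{p\}$. Since each blow-up is formed by adjoining blow-up points to the punctured space, $\theta$ already identifies the two blow-ups away from their blow-up slices, and it remains to extend $\theta$ across $\Sigma_pX\times\{0\}$ by the identity of $\Sigma_pX$ and to verify that the resulting bijection is a homeomorphism.

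The crux, and the step I expect to be the main obstacle, is continuity at the blow-up points, where the \emph{fine} control of property~(iii) is indispensable: the bare Stability Theorem~\ref{t:stability} provides only a coarse Gromov--Hausdorff approximation with no control near the center, which cannot pin down limiting directions. Fix $v\in\Sigma_pX$ and approach $(v,0)$ along the ray $t\mapsto tv$. By property~(i), $\theta(tv)$ lies at rescaled distance $t$ from $p$; by property~(iii), $\theta^{-1}$ is $\epsilon(r)$-close to the rescaled logarithm with $\epsilon(t)/t\ra0$, so applying $\theta^{-1}$ to $q=\theta(tv)$ yields a value of $\log_p q$ within $\epsilon(t)$ of $tv$. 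Writing this value as $s_tw_t$ with $w_t\in\Sigma_pX$, the relative error $\epsilon(t)/t\ra0$ forces $s_t\ra0$ and $w_t\ra v$, whence $\theta(tv)=\gexp_p(s_tw_t)$ lies on a radial curve of direction $w_t\ra v$ at vanishing parameter and therefore converges to $(v,0)$ in the blow-up. This gives continuity of the extension at each blow-up point. The same argument applied to $\theta^{-1}$ — whose approximation to $\log_p$ is the defining property of Theorem~\ref{t:impstab} — shows that a point $\gexp_p(sw)$ approaching $(w,0)$ has $\theta^{-1}$-image at direction $\ra w$ and radius $\ra0$, giving continuity of the inverse. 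Composing with the identification of the first paragraph yields $\Sigma_pX\times[0,1]\cong B_{r_0}(p)^{!}_{p}$.

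Finally, for the partition statement I would invoke property~(ii), that $\theta$ carries extremal sets to extremal sets, so the natural partitions of the two blow-ups correspond. On the cone side every extremal set of $T_pX$ through $o$ is itself a cone, namely $K(\Sigma_pE)$ for the space of directions $\Sigma_pE$ of the corresponding extremal set $E\ni p$, and its blow-up is the product $(\Sigma_pE)\times[0,1]$. Collecting these traces produces a stratification $\{S_\alpha\}$ of $\Sigma_pX$ whose product with $[0,1]$ is the natural partition. As observed just before the statement, extremal sets may arise in $\Sigma_pX$ that are not of the form $\Sigma_pE$ for any extremal $E\ni p$; since only the latter contribute, the stratification $\{S_\alpha\}$ may be strictly coarser than the stratification of $\Sigma_pX$ by its own extremal sets.
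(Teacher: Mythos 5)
Your proposal is correct and follows essentially the same route as the paper: both build the homeomorphism from the improved stability map of Theorem~\ref{t:impstab}, extend it by the identity over the blow-up points, verify continuity there using the fact that $\theta^{-1}$ is $\epsilon(t)$-close to the rescaled logarithm with $\epsilon(t)/t \to 0$, and read off the partition statement from the preservation of extremal sets. The only cosmetic differences are that the paper checks continuity for arbitrary sequences $(w,t) \to (v,0)$ rather than only along the rays $t \mapsto tv$ (your uniform estimate handles the general case verbatim), and that your additional verification of continuity of the inverse is harmless but not needed.
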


\begin{proof}
Choose $r_0>0$ so small that the conclusions of Theorem \ref{t:impstab} hold, and we have a homeomorphism $\theta:B_0(1) \ra B_{r_0}(p)$. By removing the origin of the tangent cone, we have a homeomorphism from $\Sigma_p X \times (0,1]$ to the complement of the blow-up points. Any extremal sets in $\Sigma_p X$ which arise from extremal sets in $X$ will be mapped onto the blow-up partition. Combine this with the obvious map from $\Sigma_p X \times \{0\}$ to the blow-up points to obtain a function $\tilde{\theta}:\Sigma_p X \times [0,1] \ra \blowX$. We need only confirm the continuity of this function at the blow-up points.

Let $(w,t) \ra (v,0)$ in $\Sigma_p X \times [0,1]$. Because $\phi^{-1}$ approximates the rescaled logarithm, we have that if $\theta(w,t) = y$ then $\frac{1}{t} \log_p(y)=u$ is $\epsilon(t)$-close to $w$, where $\lim_{r \to 0} \epsilon(r) = 0$. In other words, $\theta(w,t) = \gexp_p (tu)$ for some $u$ with $\dist(u,w) \leq \epsilon(t)$.

As 
$(w, t) \ra (v, 0)$, 
we have that $u \ra v$, so $\phi(w,t)$ converges to the blow-up point $v$. 
\end{proof}

We can also blow up a $G$-space $X$ around an entire orbit. We define $\GblowX$, the blow-up of $X$ at $G(p)$, to be 
\bdm
G \times_{G_p} \nu_p \times [0,1] \cup_{\phi} X \setminus G(p)
\edm
where the map $\phi : G \times_{G_p} \nu_p \times (0,1] \ra X \setminus G(p)$ is given by the normal gradient exponential $\phi(g, v, t) = \gexp_{G(p)}(t \cdot g_{*}(v))$. The blow-up admits an action of the entire group $G$.

\begin{proposition}\label{p:orbitblowup}
Let $G$, a compact Lie group, act isometrically on an Alexandrov space $X$, and let $p \in X$. Then $\GblowX / G \cong \bar{X}^{!}_{\bar{p}}$, and the stratification of $\GblowX / G$ by orbit type is finer than the blow-up stratification.
\end{proposition}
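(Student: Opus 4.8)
The plan is to prove that the $G$-quotient commutes with the blow-up, by checking that the two pieces of $\GblowX$ together with the gluing map $\phi$ descend to the corresponding pieces and gluing of $\bar{X}^{!}_{\bar{p}}$. Recall that $\bar{X} = X/G$ is again an Alexandrov space with the same lower curvature bound, and that the orbit projection $\pi : X \ra \bar{X}$ is a submetry with $\pi^{-1}(\bar{p}) = G(p)$. First I would identify the exceptional fibre: the space of directions of the quotient at $\bar{p}$ is $\Sigma_{\bar{p}}\bar{X} \cong \nu_p/G_p$, since tangent directions to the orbit collapse while two normal directions determine the same direction downstairs precisely when they differ by the isotropy $G_p$. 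As $G$ acts on $G \times_{G_p} \nu_p$ through left translation on the first factor, $(G \times_{G_p} \nu_p)/G \cong \nu_p/G_p \cong \Sigma_{\bar{p}}\bar{X}$, so the quotient of the exceptional piece $G \times_{G_p} \nu_p \times [0,1]$ is $\Sigma_{\bar{p}}\bar{X} \times [0,1]$, which is exactly the exceptional piece of $\bar{X}^{!}_{\bar{p}}$. The quotient of the complement piece is $(X \setminus G(p))/G = \bar{X} \setminus \{\bar{p}\}$, matching the complement of $\bar{X}^{!}_{\bar{p}}$.

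The heart of the argument---and the step I expect to be the main obstacle---is showing that the gluing maps agree, i.e.\ that the normal gradient exponential descends to the gradient exponential on the quotient. Concretely, I would establish that $\pi(\gexp_{G(p)}(t\,g_{*}v)) = \gexp_{\bar{p}}(t\bar{v})$, where $\bar{v} \in \nu_p/G_p$ is the class of $v$. The function $f = \tfrac{1}{2}\dist(G(p),\cdot)^2$ is $G$-invariant and, since $\pi$ is a submetry with fibre $G(p)$ over $\bar{p}$, descends to $\bar{f} = \tfrac{1}{2}\dist(\bar{p},\cdot)^2$ with $f = \bar{f}\circ\pi$. Its gradient is horizontal, and the horizontal gradient flow projects: $\pi\circ\Phi^t_f = \Phi^t_{\bar{f}}\circ\pi$. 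Combining this with the compatibility of the dilations $i_{e^t}$ and passing to the limit in the definition of the gradient exponential yields the displayed identity; equivalently, the normal gradient exponential carries each horizontal ray to a $G(p)$-radial curve, whose $\pi$-image is a radial curve from $\bar{p}$. Because $g_{*}v$ and $v$ lie in the same $G$-orbit and hence project to the same $\bar{v}$, the descended gluing sends $(\bar{v},t)$ to $\gexp_{\bar{p}}(t\bar{v}) = \bar\phi(\bar{v},t)$. This identifies the two gluings and assembles the pieces into a homeomorphism $\GblowX/G \cong \bar{X}^{!}_{\bar{p}}$.

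Finally, for the stratification statement I would compare the orbit-type stratification of $\GblowX/G$ with the blow-up partition of $\bar{X}^{!}_{\bar{p}}$ on each piece. Since the isometric $G$-action preserves extremal sets, the orbit-type stratification refines the stratification by extremal sets, and hence the possibly coarser blow-up partition. On the complement this is the statement for $\bar{X} \setminus \{\bar{p}\}$ directly; on the exceptional fibre it reduces to the analogous statement for the $G_p$-action on $\nu_p$, namely that the $G_p$-orbit-type stratification of $\nu_p/G_p$ refines the stratification of $\Sigma_{\bar{p}}\bar{X}$ appearing in the blow-up partition. In both pieces the orbit-type stratification is therefore finer, which is the claim.
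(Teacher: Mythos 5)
The first part of your argument---identifying the quotients of the two pieces and showing that the normal gradient exponential descends to $\gexp_{\bar{p}}$ because $f=\tfrac{1}{2}\dist(G(p),\cdot)^2$ is the lift of $\bar{f}=\tfrac{1}{2}\dist(\bar{p},\cdot)^2$ along the submetry $\pi$, so that the gradient flows satisfy $\pi\circ\Phi^t_f=\Phi^t_{\bar{f}}\circ\pi$---is correct and is essentially the paper's proof of the homeomorphism, with the one-line justification ``$\dist(G(p),\cdot)$ descends to $\dist(\bar{p},\cdot)$'' usefully expanded.

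The stratification part, however, rests on a premise that is backwards. You assert that ``the orbit-type stratification refines the stratification by extremal sets.'' What \cite{PP1} actually provides is that the \emph{closure of each orbit-type stratum is an extremal set}; since there are in general extremal sets that are not closures of orbit-type strata (already for the trivial action: one orbit type, but the boundary of $\bar{X}$ is extremal), the extremal-set stratification is the \emph{finer} of the two, and the orbit-type stratification is coarser---which is exactly what the paper states. Your chain ``orbit-type refines extremal, extremal refines the blow-up partition, hence orbit-type refines the blow-up partition'' therefore fails at the first link. More importantly, your argument treats the exceptional fibre $\nu_p/G_p$ and the complement $\bar{X}\setminus\{\bar{p}\}$ separately and never addresses the point that carries the content of the claim: how the orbit-type strata on the exceptional fibre match up, across the gluing, with the orbit-type strata of $\bar{X}$ near $\bar{p}$. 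The paper's argument is precisely this: since orbit-type closures are extremal, they are totally quasigeodesic and invariant under radial/gradient flow, and by examining the $G_p$-action on quasigeodesics emanating from $p$ one sees that the orbit-type stratification of $\Sigma_{\bar{p}}$ is the infinitesimal version of that of $\bar{X}$---i.e., orbit type is constant along the $[0,1]$-fibres of the blown-up ball. That compatibility with the product structure is what is actually needed later when the Covering Homotopy Theorem is invoked in the proof of the Slice Theorem, and it is the step your proposal is missing.
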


\begin{proof}
$\GblowX$ is defined by gluing together two spaces using a $G$-equivariant map $\phi$. The orbit space $\GblowX/G$ is therefore also given as a union $\nu / G_p \times [0,1] \cup_{\bar{\phi}} \bar{X} \setminus \{\bar{p}\}$, where $\bar{\phi}$ is the map induced by $\phi$. Because $\dist(G(p), \cdot)$ descends to $\dist(\bar{p},\cdot)$ the map $\bar{\phi}$ is $\gexp_{\bar{p}}$ so that $\GblowX/G$ is the blow-up $\bar{X}^{!}_{\bar{p}}$.

The stratification by orbit type of $\bar{X}$ is coarser than that by extremal sets \cite{PP1}, and by considering the action of $G_p$ on quasigeodesics emanating from $p$ we can see that the orbit-type stratification of $\Sigma_{\bar{p}}$ is the infinitesimal version of that of $\bar{X}$.
\end{proof}

\subsection{Proof of the Slice Theorem}

For the purposes of this proof, we recall the following important result of Palais \cite{Pa} (cf. Theorem 7.1 of \cite{Br}).

\begin{cht}\label{t:cht}
Let $Y$ be a topological space such that every open subspace is paracompact, let $I$ be a closed interval, and let $G$ be a compact Lie group. Suppose that $W$ is a $G$-space with orbit space $W/G \cong Y \times I$ such that the orbit structure is given by a product, that is, for each $y \in Y$ the orbit types are constant on $\{y\} \times I$. Then there is a $G$-space $X$ with $X/G \cong Y$ so that $W$ is equivalent to $X \times I$, where $G$ acts trivially on $I$.
\end{cht}

We can now proceed to the proof of the main theorem of this section.

\begin{slicethm} \label{l:sliceinf} Let a compact Lie group $G$ act isometrically on an Alexandrov space $X$. Then for all $p \in X$, there is some $r_0 > 0$ such that for all $r < r_0$ there is an equivariant homeomorphism $$\Phi : G \times_{G_p} K\nu_p \rightarrow B_r(G(p)),$$ where $\nu_p$ is the space of normal directions to the orbit $G(p)$.\end{slicethm}

\begin{proof}
Let $r_0$ be such that $B_{r_0}(\bar{p}) \subset X/G$ is homeomorphic to $T_{\bar{p}} (X/G)$. Let $W$ be the portion of $\GblowX$ corresponding to $B_{r_0}(G(p))$. Then by Proposition \ref{p:orbitblowup}, the orbit space $\bar{W}$ is the blow-up of $B_{r_0}(\bar{p}) \subset \bar{X}$ at $\bar{p}$.

Now let $S$ be a slice at $p$, the existence of which is guaranteed by \cite{MY57}. We may assume that $B_{r_0}(\bar{p}) = S/G_p$. Let $S^* = S \setminus \{p\}$. Let $V = S^* \cup \nu_p \subset W$. $V$ is $G_p$-invariant. We claim that $V/G_p \cong \bar{W}$, and so has a product structure. Assuming the claim, the Covering Homotopy Theorem 
shows that $S^*$ is $G_p$-equivariant to $\nu_p \times (0,r_0)$, and adding the point $p$ back to $S^*$ gives the result.

To prove the claim, note first that $V$ is a closed subspace of $W$. The only sequences in $S^*$ which do not converge are those which are leaving $B_{r_0}(G(p))$ or which, in $S$, have limit $p$, so the closure of $S^*$ in $W$ is made up of the union of $S^*$ with some horizontal directions. If $q \neq p$ and $\xi \in \nu_q$ is in $\cl(S^*)$, then any open neighborhood of $\xi \in W$ will contain elements of $S^*$. Any open neighborhood $U$ of $q$ in $X$ contains an open neighborhood of $\xi \in W$, but we can clearly make $U$ so small that it does not intersect $S$. Therefore $\cl(S^*) \subset \nu_p \cup S^*$. We can write $V$ as the union of two closed sets $\cl(S^*) \cup \nu_p$, so $V$ is closed.

The projection $W \ra \bar{W}$ is a map from a compact space to a Hausdorff space, and so a closed map. Its restriction to the closed subspace $V$ is also closed, and it is surjective, so it is a quotient map. But clearly it takes $G_p$-orbits on $V$ to points, so it is in fact the orbit-space projection, and therefore $V/G_p \cong \bar{W}$ as claimed.
\end{proof}

The slice can be explicitly constructed as follows.
Let $\pi: X\ra \bar{X}=X/G$. Let $p \in X$, and let $\bar{p} = \pi(p) \in \bar{X}$. Let $\bar{h}: \bar{X} \ra \rrr$ be a function built up from distance functions such that it is strictly concave on some $U$ containing $\bar{p}$ and attains its maximum value on $U$ at $\bar{p}$ (see \cite{PP1} for the construction, cf.\ \cite{GW1, K2} for more explicit constructions). Let $r_0$ be such that $B_{r_0}(\bar{p}) \subset \bar{h}^{-1}\left( \left[ a, \infty \right) \right)  \subset U$ for some $a$. The gradient flow of $\bar{h}$ gives the retraction $\bar{F}: B_{r_0}(\bar{p}) \ra \left\lbrace p \right\rbrace$.

We can lift the function $\bar{h}$ to a function $h: X \ra \rrr$. This function is defined by distance functions from $G$-orbits, and so its gradient flow gives a $G$-equivariant retraction $F : B_r(G(p)) \rightarrow G(p)$. Then by Proposition II.3.2 of \cite{Br}, $F^{-1}(p)$ is a slice.

Finally, in the special case where $X$ has strictly positive curvature and $\bar{X}$ has non-empty boundary, the Soul Theorem \ref{t:soulthm} tells us that the soul of $\bar{X}$ is a point $\bar{p}$. 
If $G(p)$ is the corresponding orbit in $X$,  we may use the Sharafutdinov retraction in place of $\bar{F}$ to construct the slice, and show that $\pi^{-1} (\bar{X} \setminus \d \bar{X})$ is equivariantly homeomorphic to $G \times_{G_p} K\nu_p$.

\section{Fixed Point Sets}\label{s:fixedpoints}
 
In this section, we show that some important results from the theory of isometric actions of compact Lie groups on Riemannian manifolds still hold in the context of Alexandrov geometry.

We show that, in analogy to the Riemannian situation, the fixed point set of a group action is a totally quasigeodesic subset. We then consider torus actions on positively curved spaces, and show that their fixed point sets are always of even codimension, that in even dimensions the fixed point set is always non-empty, and that in odd dimensions if no point is fixed, then there must be a circle orbit.

\subsection{Structure of Fixed Point Sets}

\begin{proposition} Let $G$ be a compact Lie group acting on an Alexandrov space $X$ by isometries. Let $H \subset G$ be a closed subgroup, and let $F \subset X$ be the set of fixed points of $H$. Then $F$ is a totally quasigeodesic subset of $X$ and admits a stratification into manifolds.\end{proposition}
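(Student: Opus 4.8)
The plan is to prove both assertions simultaneously by induction on $n = \dim X$, with the space of directions furnishing the inductive step. Since $F = \bigcap_{h \in H} \Fix(h)$ is an intersection of closed sets, it is closed. Fix $q \in F$. The isotropy action of $H$ on the space of directions $\Sigma_q X$ is isometric, and $\Sigma_q X$ is a compact Alexandrov space with $\curv \geq 1$ of dimension $n-1$; thus the inductive hypothesis applies to the $H$-space $\Sigma_q X$, and its fixed-point set $(\Sigma_q X)^H$ is totally quasigeodesic in $\Sigma_q X$ and admits a stratification into manifolds. In particular $(\Sigma_q X)^H$, with its intrinsic metric, is a compact Alexandrov space with $\curv \geq 1$.

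First I would settle the local structure and the stratification using the Slice Theorem \ref{l:sliceinf}. Since $q$ is fixed by $H$ its orbit is a point, so the normal space is all of $\Sigma_q X$ and the theorem provides an $H$-equivariant homeomorphism $B_r(q) \cong T_q X = K(\Sigma_q X)$, where $H$ acts by coning off its isotropy action on $\Sigma_q X$. Passing to fixed-point sets gives $F \cap B_r(q) \cong (T_q X)^H = K\big((\Sigma_q X)^H\big)$, which simultaneously identifies $\Sigma_q F$ with $(\Sigma_q X)^H$ and exhibits a neighbourhood of $q$ in $F$ as a cone on the MCS space $(\Sigma_q X)^H$. Hence $F$ is itself an MCS space, and the canonical stratification into topological manifolds described in Section \ref{s:prelim} applies.

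The remaining, and harder, task is to show $F$ is totally quasigeodesic, that is, that a shortest path $\sigma$ in $F$ (with its intrinsic metric) is a quasigeodesic in $X$. By the first-variation characterization of quasigeodesics \cite{Pet3}, it suffices to check at every interior point $q = \sigma(t_0)$ and every $p \in X$ the angle condition $\angle(\sigma^+(t_0), \uparrow_q^p) + \angle(\sigma^-(t_0), \uparrow_q^p) \geq \pi$, the angles being measured in the ambient $\Sigma_q X$; this is precisely the kink inequality that upgrades $\rho_k \circ \dist(p, \cdot)|_\sigma$ to a $(1 - k f)$-concave function (cf.\ Equation \ref{e:1}). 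The velocities $\sigma^{\pm} := \sigma^\pm(t_0)$ lie in $\Sigma_q F = (\Sigma_q X)^H$ and, because $\sigma$ is locally shortest in $F$, are antipodal there: $\angle_{(\Sigma_q X)^H}(\sigma^+, \sigma^-) = \pi$. The crux is to transfer this to the ambient angle $\angle_{\Sigma_q X}(\sigma^+, \sigma^-) = \pi$, after which the condition follows immediately from the triangle inequality for angles in $\Sigma_q X$.

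The main obstacle is exactly this infinitesimal-to-ambient passage, and it is where the symmetry of the action must be exploited. The set of ambient shortest paths in $\Sigma_q X$ between the $H$-fixed directions $\sigma^+$ and $\sigma^-$ is $H$-invariant, so I would symmetrize: replacing a direction $\uparrow_q^p$ by the ($H$-invariant, hence $(\Sigma_q X)^H$-valued) circumcenter of its orbit, and using that $(\Sigma_q X)^H$ is totally quasigeodesic by the inductive hypothesis, to force a minimizing connection to remain in the fixed-point set and thereby conclude $\angle_{\Sigma_q X}(\sigma^+, \sigma^-) = \pi$. The delicate points are the possible non-uniqueness of ambient shortest paths between $\sigma^\pm$ and the interaction of the intrinsic and ambient metrics on $(\Sigma_q X)^H$; handling these carefully, with the compactness of $H$ supplying the averaging, is what makes the argument go through.
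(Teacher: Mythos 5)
Your reduction of the quasigeodesic property to a first-order angle condition is where the argument breaks down. The inequality $\angle(\sigma^+(t_0), \uparrow_q^p) + \angle(\sigma^-(t_0), \uparrow_q^p) \geq \pi$ only rules out a concave kink of $\dist(p,\cdot)\circ\sigma$ at $t_0$; it does not ``upgrade'' $\rho_k \circ \dist(p,\cdot)|_\sigma$ to a $(1-kf)$-concave function, because that is a genuinely second-order condition. A round circle of small radius in the Euclidean plane is a $C^1$ unit-speed curve, so $\sigma^+$ and $\sigma^-$ are antipodal in $\Sigma_q$ and the angle condition holds with equality at every point for every $p$; yet $f = \tfrac12\dist(p,\cdot)^2$ restricted to the circle has $f'' = (R/r)\cos(t/r) > 1$ at the point nearest to $p$, so the circle is not a quasigeodesic. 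Thus even if your symmetrization step succeeded in showing $\angle_{\Sigma_q X}(\sigma^+,\sigma^-) = \pi$, the conclusion would not follow, and the whole inductive scheme for total quasigeodesy collapses. (The Slice Theorem half of your argument, identifying $F \cap B_r(q)$ with $K\bigl((\Sigma_q X)^H\bigr)$ and deducing the MCS stratification, is fine.)

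The paper's proof avoids all of this by passing to the orbit space: since $H$ fixes every point of $F$, the projection $X \to X/H$ restricted to $F$ is an isometry onto its image, and that image is an extremal set of $X/H$ by Perelman--Petrunin, hence totally quasigeodesic and stratified into manifolds. Because $\dist(p,\cdot)$ on $F$ agrees with $\dist(\bar p,\cdot)$ on the image for every $p \in X$, the full concavity condition defining quasigeodesics transfers back to $X$ verbatim. If you want to salvage a direct argument, you would need to reproduce the second-order development comparison that Perelman--Petrunin establish for extremal sets, not merely the first-variation inequality.
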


\begin{proof}
The isometric image of $F$ in the orbit space $X/H$ is an extremal set, and therefore it is totally quasigeodesic and stratified into manifolds \cite{PP1}. For any $p \in X$, the function $\dist(p,\cdot)$ on $F$ is equal to $\dist(\bar{p},\cdot)$ on the image of $F$, and so curves in $F$ which are quasigeodesics for $X/H$ are also quasigeodesics for $X$, giving the result. 
\end{proof}

\begin{example}\label{e:CP2}
Suspend an isometric $T^1$-action on $\ccc P^2$, where $T^1$ fixes an $S^2$ and an isolated point in the $\ccc P^2$. $\Fix(\Sigma \ccc P^2; T^1)$ is connected and consists of an $S^3$ and an interval, $I$, where the interval's endpoints are the antipodes of $S^3$. The strata are (i) a twice punctured $S^3$, (ii) an open interval, and (iii) two isolated points.
\end{example}

\subsection{Torus Actions on Positively Curved Spaces}

We first recall Petrunin's generalization of Synge's Lemma, which is used to prove the Generalized Synge's Theorem \ref{t:GST}.

\begin{GSlemma}\label{l:gensynge}\cite{Pet1} Let $X$ be an orientable Alexandrov space with  
$\curv \geq 1$ and let $T:X\rightarrow X$ be an isometry. Suppose that
\begin{enumerate}
\item $X$ is even-dimensional and $T$ preserves orientation; or
\item $X$ is odd-dimensional and $T$ reverses orientation.
\end{enumerate}
Then $T$ has a fixed point.
\end{GSlemma}

In the even-dimensional case, the theory of ramified orientable double covers yields the following corollary.

\begin{corollary}\label{c:synge}Let $X$ be an Alexandrov space of even dimension with $\curv \geq 1$, and let $G$ be a connected Lie group acting on $X$ by isometries. Then for any $g \in G$, $g$ has a fixed point.\end{corollary}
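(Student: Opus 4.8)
The plan is to split into two cases according to whether $X$ is orientable, reducing both to the Generalized Synge's Lemma \ref{l:gensynge}, case (1), which furnishes a fixed point for an orientation-preserving isometry of an even-dimensional orientable space with $\curv \geq 1$.

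First I would treat the case where $X$ is orientable. Fixing a fundamental class, each isometry acts on $H^n(X) \cong \zzz$ by multiplication by $\pm 1$, which defines a homomorphism $\epsilon : G \ra \{ \pm 1 \}$. Since an element of $G$ near the identity is connected to it by a path in $G$, and the induced action on cohomology is a homotopy invariant, $\epsilon$ is locally constant; as $G$ is connected, $\epsilon \equiv 1$. Thus every $g \in G$ preserves orientation, and Lemma \ref{l:gensynge}(1) applies directly to produce a fixed point of $g$.

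Next, if $X$ is non-orientable, I would pass to its ramified orientable double cover. By Theorem A there is an orientable Alexandrov space $\tilde{X}_{\txt{Ram}}$ of the same (even) dimension with $\curv \geq 1$, and by Lemma \ref{l:lift} the $G$-action lifts to an isometric action of a $2$-fold covering group $\tilde{G}$, with covering homomorphism $\rho : \tilde{G} \ra G$. Because $\rho$ is a local isomorphism, the identity component $\tilde{G}_0$ maps to an open, hence closed, connected subgroup of $G$, so $\rho(\tilde{G}_0) = G$. Given $g \in G$, I would therefore choose a lift $\tilde{g} \in \tilde{G}_0$, rerun the orientation-character argument of the first case for the connected group $\tilde{G}_0$ acting on the orientable space $\tilde{X}_{\txt{Ram}}$ to conclude that $\tilde{g}$ preserves orientation, and invoke Lemma \ref{l:gensynge}(1) to obtain a fixed point $\tilde{x}$ of $\tilde{g}$. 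By equivariance of the lift, its projection $\pi(\tilde{x})$ is then fixed by $g$, where $\pi : \tilde{X}_{\txt{Ram}} \ra X$ is the covering projection.

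The only delicate point I anticipate is the orientation bookkeeping in the non-orientable case: one must not assume the covering group $\tilde{G}$ is itself connected (the cover may split as $G \times \zzz_2$), and so it is essential to extract the orientation-preserving lift from the identity component $\tilde{G}_0$, using the surjectivity $\rho(\tilde{G}_0) = G$ just noted. Once this is in place, the remainder is a direct appeal to the machinery already assembled, namely Theorem A, Lemma \ref{l:lift}, and Lemma \ref{l:gensynge}.
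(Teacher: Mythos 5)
Your proposal is correct and follows essentially the same route as the paper: the paper likewise reduces to the non-orientable case, lifts the action to the ramified orientable double cover via Lemma \ref{l:lift}, and applies the Generalized Synge's Lemma \ref{l:gensynge} to a lift of $g$ lying in the identity component of the covering group $\tilde{G}$. Your additional care about the orientation character and the surjectivity of $\tilde{G}_0 \to G$ simply makes explicit the details the paper leaves to the reader.
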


\begin{proof}
We may take $X$ to be non-orientable. By Lemma \ref{l:lift} we may lift the action of $G$ to an action of a 2-fold covering group $\tilde{G}$ on $\tilde{X}_{\txt{Ram}}$, and then the Generalized Synge's Lemma \ref{l:gensynge} applies to a lift of $g$ which is in the connected component of the identity of $\tilde{G}$.
\end{proof}

We can now prove two results about the isotropy of torus actions: the counterparts of Berger's Lemma \cite{Berger} and its odd-dimensional analogue \cite{Sugahara}.

\begin{lemma}\label{l:fixedpoint}Let $T^k$ act by isometries on $X^{2n}$, an even-dimensional space of positive curvature. Then $T^k$ has a fixed point.\end{lemma}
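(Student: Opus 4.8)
The plan is to prove that a torus $T^k$ acting by isometries on an even-dimensional positively curved Alexandrov space $X^{2n}$ has a fixed point, by reducing to a circle action and then to the single-isometry statement already established in Corollary \ref{c:synge}. First I would dispose of the trivial cases: if $k=0$ there is nothing to prove, and it suffices to find a fixed point for the full torus. The key observation is that since $T^k$ is connected, Corollary \ref{c:synge} guarantees that \emph{every} element $g \in T^k$ individually has a fixed point. The task is to upgrade this from ``each element fixes something'' to ``all elements fix a common point,'' which is where the real content lies.

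The approach I would take is induction on the dimension $2n$ of the space, combined with induction on the rank $k$ (or on the dimension of the torus). The engine of the induction is to pick a circle subgroup $T^1 \subset T^k$ and examine its fixed point set $F = \Fix(X; T^1)$. By Corollary \ref{c:synge} applied to a generator (or to the circle itself via the connectedness argument), $F$ is non-empty. By the Structure of Fixed Point Sets proposition, $F$ is a totally quasigeodesic subset admitting a stratification into manifolds. Crucially, because the $T^k$-action is abelian, the quotient torus $T^k / T^1$ acts on $F$, and this induced action is again isometric. The heart of the argument is then to verify that a component of $F$ is itself an Alexandrov space of positive curvature with even dimension, so that the inductive hypothesis applies to the residual torus action on it, yielding a common fixed point.

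The main obstacle I anticipate is controlling the dimension and the curvature of the fixed point set $F$, specifically establishing that $F$ (or the relevant component) is even-dimensional. Here I would invoke the companion result stated in the surrounding text, namely that fixed point sets of torus actions on positively curved spaces have even codimension; this is precisely the parity statement that makes the induction close, since it guarantees $\dim F$ is even and strictly smaller than $2n$ whenever the circle acts non-trivially. The positive curvature of $F$ follows because a totally quasigeodesic subset of a positively curved Alexandrov space inherits the lower curvature bound on its intrinsic metric. With an even-dimensional positively curved $F$ of strictly smaller dimension carrying an isometric $T^{k-1} = T^k/T^1$ action, the inductive hypothesis produces a point fixed by $T^{k-1}$ within $F$; since $F$ is already fixed pointwise by $T^1$, this point is fixed by all of $T^k$.

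The only delicate point in the parity step is the degenerate case where the chosen circle acts trivially (so $F = X$) or where $F$ has a component of dimension zero; in the former I would simply choose a different circle subgroup that acts non-trivially (possible unless the whole torus acts trivially, in which case every point is fixed), and in the latter an isolated fixed point already does the job since the residual torus must preserve that point. Thus the induction terminates, and the lemma follows.
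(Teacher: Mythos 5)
Your induction contains a genuine gap at the step where you apply the inductive hypothesis to a component of $F=\Fix(X;T^1)$. The lemma's hypothesis requires an even-dimensional Alexandrov space of positive curvature, and your justification that $F$ with its intrinsic metric is such a space --- ``a totally quasigeodesic subset \ldots inherits the lower curvature bound'' --- is not available. The generalized Lieberman theorem only guarantees that shortest paths of $F$ are quasigeodesics of the ambient space $X$, which controls the concavity of $\rho_k\circ\dist_X(p,\cdot)$ along them; it says nothing about $\rho_k\circ\dist_F(p,\cdot)$, and since $\dist_F\geq\dist_X$ no comparison inequality transfers. Whether an extremal subset (equivalently here, the fixed point set, which is isometric to an extremal subset of $X/T^1$) equipped with its induced intrinsic metric is an Alexandrov space with the same lower curvature bound is essentially a well-known open problem. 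This is exactly why the other inductive arguments in this paper (Corollary \ref{c:odddim}, Theorem \ref{t:upperbound}) descend to spaces of directions and normal spaces, which \emph{are} known to carry $\curv\geq 1$, and never to fixed point sets. A secondary point: Proposition \ref{p:evencodim} is stated only after this lemma, so you would at least need to note that its proof (an induction on spaces of directions) does not depend on the present statement.

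The idea you are missing is that no induction is needed at all, because a torus has a topological generator. Your own remark that a fixed point of a generator of a dense cyclic subgroup of $T^1$ is automatically fixed by all of $T^1$ applies verbatim to $T^k$, and that is the paper's entire proof: choose $g$ in a dense one-parameter subgroup of $T^k$; by Corollary \ref{c:synge} the isometry $g$ has a fixed point, which is then fixed by the closure of $\langle g\rangle$; letting the generator tend to the identity along the one-parameter subgroup and taking a limit point of the resulting fixed points (compactness of $X$ plus equicontinuity of the action) yields a point fixed by the whole torus. This route uses only the single-isometry statement already established via the ramified double cover and avoids the structure of $F$ entirely.
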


\begin{proof}Consider a dense 1-parameter subgroup of $T^k$, and within it an infinite cyclic subgroup. By Corollary \ref{c:synge}, the cyclic subgroup fixes a point. As we move the generator of the subgroup towards the identity, we generate a sequence of fixed points in $X$, and any limit point of that sequence will be fixed by the torus.\end{proof}

\begin{corollary}\label{c:odddim}Let $T^k$ act by isometries on $X^{2n+1}$, an odd-dimensional space of positive curvature. Then either there is a circle orbit or $T^k$ has a fixed point set of dimension at least one.\end{corollary}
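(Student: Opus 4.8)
The plan is to prove the dichotomy by asking whether or not $T^k$ has a fixed point, and to reduce \emph{both} alternatives to the even-dimensional result, Lemma \ref{l:fixedpoint}. Since $X$ has odd dimension, at a fixed point the space of directions is an even-dimensional positively curved space, while away from the fixed points a quotient by a circle produces an even-dimensional space; in each case Lemma \ref{l:fixedpoint} supplies the fixed point that drives the argument. Throughout I would first pass to the effective quotient torus, which changes neither the orbits nor the fixed-point set (if this torus is trivial then $X^{T^k} = X$ has dimension $2n+1 \geq 1$ and the conclusion is immediate).

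Suppose first that $T^k$ fixes a point $p$. The space of directions $\Sigma_p$ is a positively curved Alexandrov space of even dimension $2n$ on which $T^k$ acts isometrically, so Lemma \ref{l:fixedpoint} provides a fixed direction $\xi \in \Sigma_p$. The key point is that the gradient exponential $\gexp_p$ is $T^k$-equivariant: because $p$ is fixed, the function $\dist(p,\cdot)$ is $T^k$-invariant, so the gradient flow and dilations defining $\gexp_p$ (Definition \ref{d:gexp}) commute with the action, giving $g \cdot \gexp_p(v) = \gexp_p(g_* v)$. Applying this to $\xi$ shows that the radial curve $t \mapsto \gexp_p(t\xi)$ is fixed pointwise by $T^k$, so $p$ is a non-isolated fixed point and the component of $X^{T^k}$ through $p$ has dimension at least one. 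This establishes the second alternative whenever a fixed point exists.

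It remains to show that if $T^k$ has no fixed point then there is a circle orbit, which I would prove by induction on the rank $k$. The base case $k=1$ is immediate: with no fixed point every isotropy group is finite, hence every orbit is a circle. For $k \geq 2$, choose a circle subgroup $T^1 \subseteq T^k$ and split into two cases according to $X^{T^1}$. If $X^{T^1} = \emptyset$, then $T^1$ acts almost freely and $X/T^1$ is a positively curved Alexandrov space of even dimension $2n$; the induced action of $T^{k-1} = T^k/T^1$ has a fixed point $\bar q$ by Lemma \ref{l:fixedpoint}, and unwinding the quotient action gives $T^k(q) = T^1(q)$, a circle orbit. If instead $X^{T^1} \neq \emptyset$, set $F = X^{T^1}$, a compact positively curved Alexandrov space carrying an isometric action of $T^{k-1} = T^k/T^1$; this action has no fixed point, since such a point would be fixed by all of $T^k$. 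Crucially, $F$ must be odd-dimensional: were it even-dimensional, Lemma \ref{l:fixedpoint} would force a $T^{k-1}$-fixed point on $F$, a contradiction. The inductive hypothesis then yields a circle orbit of $T^{k-1}$ on $F$, which, since $T^1$ fixes $F$ pointwise, is exactly a circle orbit $T^k(q)$ in $X$.

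I expect the main obstacle to be the almost-free case $X^{T^1} = \emptyset$, where there is no fixed point upstairs and no fixed set to recurse on; the decisive move is to trade one unit of symmetry rank for a drop in dimension by passing to $X/T^1$, landing in even dimension where Lemma \ref{l:fixedpoint} applies, and then to verify that a fixed point of $T^{k-1}$ downstairs pulls back to a genuine circle orbit of $T^k$ upstairs. A secondary subtlety, which this approach handles \emph{without} recourse to a separate even-codimension theorem, is controlling the parity of $X^{T^1}$ in the recursive case: rather than computing the codimension directly, I would let Lemma \ref{l:fixedpoint} force the parity, using that $T^{k-1}$ acts on $F$ without fixed points. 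The remaining checks---that quotients and fixed-point sets of torus actions are again compact positively curved Alexandrov spaces, and that ``no fixed point'' is inherited---are routine consequences of standard facts about isometric quotients and extremal subsets.
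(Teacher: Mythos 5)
Your first case (a $T^k$-fixed point $p$ exists) and your sub-case $X^{T^1}=\emptyset$ are both correct and coincide with the paper's argument: pass to the even-dimensional positively curved spaces $\Sigma_p$ and $X/T^1$ respectively, invoke Lemma \ref{l:fixedpoint}, and in the first case use the $T^k$-equivariance of $\gexp_p$ to promote a fixed direction to a pointwise-fixed radial curve. The genuine gap is in your sub-case $X^{T^1}\neq\emptyset$. There you assert that $F=X^{T^1}$ is ``a compact positively curved Alexandrov space'' and then apply Lemma \ref{l:fixedpoint} to it (to force odd dimension) and recurse. But fixed-point sets of isometric torus actions on Alexandrov spaces need be neither equidimensional nor non-branching, so they need not be Alexandrov spaces at all; the paper's own Example \ref{e:CP2} is a counterexample squarely within the hypotheses of this corollary: $\Fix(\Sigma\ccc P^2;T^1)$ is a connected set consisting of an $S^3$ with an arc attached at its two endpoints, with strata of dimensions $3$, $1$ and $0$. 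Proposition \ref{p:evencodim} controls only the codimension of each component, not the global structure, so neither the parity argument (``were $F$ even-dimensional, Lemma \ref{l:fixedpoint} would apply'') nor the inductive application of the corollary to $F$ is licensed as written. This is why you cannot treat ``that quotients and fixed-point sets \ldots are again compact positively curved Alexandrov spaces'' as a routine background fact: it is true for quotients but false for fixed-point sets.

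The repair is immediate, and it is what the paper does: the quotient argument of your sub-case (a) does not require $T^1$ to act without fixed points. For any $T^1\subset T^k$ acting non-trivially, $X/T^1$ is a $2n$-dimensional positively curved Alexandrov space; Lemma \ref{l:fixedpoint} gives a point $\bar q$ fixed by the induced $T^{k-1}$-action; the fibre over $\bar q$ is a single $T^1$-orbit which is $T^k$-invariant and hence equals $T^k(q)$; and it cannot be a single point, since that point would then be fixed by all of $T^k$, contrary to assumption. So it is a circle orbit. With this change the induction on $k$ and the case division on $X^{T^1}$ disappear, and your proof becomes the paper's.
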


\begin{proof}If $T^k$ has a fixed point $p$, then we may apply Lemma \ref{l:fixedpoint} to the isotropy action
on $\Sigma_p$, a positively curved Alexandrov space of even dimension. Otherwise, let $T^1 \subset T^k$ act non-trivially, and consider the induced action of $T^{k-1}$ on the $2n$-dimensional space $X / T^1$. By Lemma \ref{l:fixedpoint}, this action fixes a point, and that point corresponds to a circle orbit of $T^k$ in $X$.\end{proof}

Finally, we note that an easy induction shows that a familiar result on the codimension of the fixed point set of circle actions, or, more generally, torus actions, on Riemannian manifolds holds for Alexandrov spaces.

\begin{proposition}\label{p:evencodim} Let $T^1$ act isometrically on $X^n$, a compact Alexandrov space. Then the fixed point set components of the circle action are of even codimension in $X^n$. \end{proposition}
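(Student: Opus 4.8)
The plan is to prove this by induction on the dimension $n$, reducing the codimension question at a fixed point to the space of directions, where the isotropy action is again an isometric $T^1$-action on a lower-dimensional positively curved space. First I would fix a fixed-point component $F$ and a point $p \in F$. The tangent cone $T_p X$ is the cone on $\Sigma_p$, and the fixed-point set of $T^1$ acting on $\Sigma_p$ is precisely $\Sigma_p F$, the space of directions tangent to $F$. Since a small ball around $p$ is homeomorphic to the tangent cone (by the Stability Theorem \ref{t:stability}), the codimension of $F$ in $X$ equals the codimension of $\Sigma_p F$ in $\Sigma_p$ as a stratum, which in turn equals one more than the codimension of $\Sigma_p F$ inside $\Sigma_p$. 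Thus it suffices to show that $\codim(\Sigma_p F \subset \Sigma_p)$ is odd, equivalently that the fixed-point set of the induced $T^1$-action on $\Sigma_p$ has the appropriate parity.

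The key structural input is Proposition \ref{p:normal_space}: where the circle acts nontrivially, the space of directions splits as a spherical join, separating tangent directions to an orbit (an isometric round sphere) from normal directions. The crux of the induction is to examine the induced isometric $T^1$-action on $\Sigma_p$, which is again a compact Alexandrov space with $\curv \geq 1$ of dimension $n-1$, and apply the inductive hypothesis to it. The anchor of the induction is dimension $0$ or $1$: in dimension $1$ an Alexandrov space is a circle or interval, and an isometric circle action either fixes everything (codimension $0$) or acts with no fixed points, so the parity claim holds trivially; isolated fixed points in a surface ($n=2$) give codimension $2$, which is the base case doing the real work.

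The main obstacle will be handling the possibility that the fixed-point set near $p$ is not itself a manifold of a single dimension, and relating the topological codimension of the stratum containing $p$ to the dimension of $\Sigma_p F$. I would use the fact (established earlier) that $F$ is a totally quasigeodesic subset admitting a stratification into manifolds, and that $\Sigma_p F$ is the space of directions into $F$. The cleanest route is to argue at the level of the space of directions throughout, so that ``codimension of the fixed-point component'' is consistently interpreted as $\dim \Sigma_p - \dim \Sigma_p F$, and to invoke that a small neighborhood of $p$ in $X$ is the cone on $\Sigma_p$ with the fixed set being the subcone on $\Sigma_p F$. The induction then closes: $\dim \Sigma_p - \dim \Sigma_p F$ is even by the inductive hypothesis applied to $\Sigma_p$, so $\dim X - \dim F = (\dim \Sigma_p + 1) - (\dim \Sigma_p F + 1) = \dim \Sigma_p - \dim \Sigma_p F$ is even as well.

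I would remark that the statement for general $T^k$ follows immediately, since a component of the $T^k$-fixed-point set is a component of the fixed-point set of a single suitably chosen circle subgroup, or by iterating the circle result; but as phrased the proposition is about $T^1$, so the induction on $n$ via the space of directions is the entire content. The subtle point to verify carefully is that the isotropy action on $\Sigma_p$ is genuinely isometric and that $\Sigma_p$ satisfies the hypotheses (compact, $\curv \geq 1$), both of which are standard facts recalled in Section \ref{ss:basics}.
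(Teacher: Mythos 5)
Your proposal is correct and is essentially the argument the paper has in mind: the paper offers no written proof beyond calling the result ``an easy induction,'' and the induction it intends is exactly yours, passing to the isotropy action on $\Sigma_p$ (a compact Alexandrov space of $\curv \geq 1$ and dimension $n-1$), identifying the fixed directions with $\Sigma_p F$ via the equivariant local cone structure, and noting that $\dim X - \dim F = \dim \Sigma_p - \dim \Sigma_p F$. One internal slip to repair: the middle claim that the codimension of $F$ in $X$ is ``one more than'' $\codim(\Sigma_p F \subset \Sigma_p)$, and that the latter should therefore be \emph{odd}, contradicts your (correct) final computation, in which the two codimensions are equal and both even.
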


\section{Fixed-Point Homogeneous Actions}\label{s:fph}

One measurement for the size of a transformation group $G\times X\rightarrow X$ is the dimension of its orbit space $X/G$, also called the {\it cohomogeneity} of the action. The cohomogeneity is clearly constrained by the dimension of the fixed point set $X^G$  of $G$ in $X$. 
In fact, $\dim (X/G)\geq \dim(X^G) +1$ for any non-trivial action. In light of this we have the following definition.

\begin{definition}The {\it fixed-point cohomogeneity} of an action, denoted by $\Cohomfix(X;G)$, is given by
\[
\textrm{cohomfix}(X; G) = \dim(X/G) - \dim(X^G) -1\geq 0.
\]
\end{definition}

A space with fixed-point cohomogeneity $0$ is called {\it fixed-point homogeneous}.
Note that in the manifold case,  
the largest connected component of the fixed point set corresponds to a connected component of the boundary of the orbit space. However, 
for such an action on an Alexandrov space, it may only be a subset of a connected component of the boundary, as is the case, for example, of the suspended circle action on $\Sigma \rrr P^2$.

We first recall the classification result for fixed-point homogeneous Riemannian manifolds of positive sectional curvature \cite{GS2}.

\begin{theorem} Let $G$, a compact Lie group, act isometrically and fixed-point homogeneously on $M^n$, a closed, simply-connected, positively curved Riemannian
manifold with $M^G \neq \emptyset$. Then $M^n$ is diffeomorphic to one of $S^n$, $\ccc P^k$, $\hh P^m$ or ${\rm Ca}P^2$, where $2k=4m=n$.
\end{theorem}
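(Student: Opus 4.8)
The plan is to reduce the classification to an analysis of the \emph{soul orbit}, exactly as in Theorem C, and then to use positive curvature in the smooth setting to upgrade the topological join structure into a rigid linear model. First I would let $F$ be the component of $M^G$ of maximal dimension. Since fixed point sets of isometries are totally geodesic, $F$ is a totally geodesic submanifold, and fixed-point homogeneity forces $G$ to act transitively on each unit normal sphere $S^k$ of $F$ (the normal sphere collapses to a point in $M/G$, so that $F$ maps to a boundary face of $\bar M = M/G$). Because $M$ is positively curved, quotients preserve lower curvature bounds, so $\bar M$ is an Alexandrov space with $\curv \geq k > 0$ and $\d \bar M \neq \emptyset$. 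By the Soul Theorem, its soul is a single point $\bar p$, the unique point at maximal distance from $\d \bar M$; lifting, there is a unique soul orbit $G(p) \cong G/G_p$ at maximal distance from $F$. The gradient flow of $\dist(F, \cdot)$ then exhibits $M$ as the union of the normal disk bundle of $F$ and the normal disk bundle of $G(p)$, glued along their common distance sphere.

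Next I would exploit the feature that distinguishes the manifold case from the Alexandrov case: here the space of normal directions $\nu_p$ to the soul orbit is the unit sphere of a \emph{Euclidean} normal space, hence isometric to a round sphere $S^m_1$, with $G$ acting transitively on it. This roundness is precisely the rigidity highlighted in the introduction, and it is the heart of the argument. The Slice Theorem presents a neighborhood of the soul orbit as $G \times_{G_p} K\nu_p$, so that $M$ is equivariantly homeomorphic to the join model $(\nu_p * G)/G_p$ of Theorem C. Since both $\nu_p$ and the fibre spheres are round, one checks that this model is a linear isometric quotient of a round sphere $S^n$, decomposed as a join $S^{a}_1 * S^{b}_1$ on which $G$ acts trivially on the factor corresponding to $F$ and via a transitive linear isotropy representation on the complementary factor.

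Finally I would invoke the Montgomery--Samelson and Borel classification of compact groups acting transitively and effectively on spheres. Together with the hypotheses that $M$ is closed, simply connected (ruling out $\rrr P^n$) and positively curved, the admissible isotropy representations are the real, complex, quaternionic and octonionic Hopf actions, and the dimension of the ambient sphere together with the Hopf fibre dimensions pins down the four families $S^n$, $\ccc P^k$, $\hh P^m$ and $\mathrm{Ca}P^2$ with $2k = 4m = n$. The main obstacle is the upgrade from an equivariant homeomorphism to a \emph{diffeomorphism}: one must show that the two normal disk bundles in the soul decomposition are the standard linear ones and that the gluing along the common distance sphere is the standard identification, so that the smooth double disk bundle is diffeomorphic, and not merely homeomorphic, to the model symmetric space. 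This is exactly the step at which positive curvature and the roundness of $\nu_p$ must be used quantitatively rather than only formally, in order to exclude exotic bundle data or nonstandard gluings.
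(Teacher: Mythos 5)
First, be aware that the paper does not actually prove this statement: it is the Fixed-Point Homogeneity Theorem of Grove and Searle, quoted from \cite{GS2}, and the paper only recalls the Structure Theorem from that reference (the decomposition $M = D(F)\cup_E D(G(p))$ into two normal disk bundles glued along their common boundary) as the mechanism behind the classification, together with the remark that the argument relies completely on the normal space $\nu$ to the soul orbit being isometric to a round sphere. Your outline reproduces that strategy faithfully up to the point where the real work begins: the soul orbit via the Soul Theorem applied to $M/G$, the double disk bundle via the gradient flow of $\dist(F,\cdot)$, transitivity of $G$ on the normal spheres to $F$, roundness of $\nu_p$, and the classification of compact groups acting transitively and effectively on spheres.

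Two genuine gaps remain. The lesser one is the claim that the join model $(\nu_p * G)/G_p$ ``is a linear isometric quotient of a round sphere $S^n$'': the model only carries the status of an equivariant homeomorphism, and no isometric statement of this kind can hold in general (a positively curved metric on $\ccc P^k$ admitting such an action need not be the Fubini--Study metric), so the phrase ``one checks'' conceals the actual identification of the model with a compact rank one symmetric space. The serious gap is the step you yourself label the main obstacle: the upgrade from an equivariant homeomorphism of double disk bundles to a diffeomorphism. That step is the substance of \cite{GS2} --- one must show that the two normal disk bundles are the standard linear ones and that the gluing along the common distance hypersurface is the standard identification, using the transitive isotropy representation on the normal sphere and the structure of the principal orbits $G/H$; the published correction to \cite{GS2} indicates how delicate this is. As written, your proposal correctly locates the difficulty but does not resolve it, so it is an outline of the known proof rather than a proof.
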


This result is obtained via the following structure theorem from \cite{GS2} which allows us to decompose the manifold as the union of two disk bundles.

\begin{theorem}[Structure Theorem]\label{t:fphstructure} Let $M$ be a positively curved Riemannian manifold with an (almost) effective isometric
fixed-point homogeneous $G$-action and $M^G\neq \emptyset$. If $F$ is the component of $M^G$ with maximal dimension then the following hold:
\begin{itemize}
\item[(i)] There is a unique orbit $G(p)\cong G/G_p$ at maximal distance to $F$ (the ``soul" orbit).
\item[(ii)] All $G_p$-orbits in the normal sphere $S^l$ to $G(p)$ at $p$ are principal and diffeomorphic to $G_p/H$. Moreover $F$ is diffeomorphic to $S^l/G_p$.
\item[(iii)] There is a $G$-equivariant decomposition of $M$, as 
\bdm
M=D(F)\cup_E D(G(p)),
\edm
where $D(F)$, $D(G(p))$, are the normal disk bundles to $F$, $G(p)$, respectively, in $M$ with common boundary $E$ when viewed as tubular neighborhoods.
\item[(iv)] All orbits in $M\setminus (F\cup G(p))$ are principal and diffeomorphic to $S^k\cong G/H$, the normal sphere to $F$.
\end{itemize}
\end{theorem}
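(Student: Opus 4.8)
The plan is to pass to the orbit space $\bar{M} = M/G$, a compact, positively curved Alexandrov space, and to read off the structure from the distance function to $\bar{F} := \pi(F)$. Fixed-point homogeneity gives $\dim \bar{M} = \dim F + 1$, and since $\pi$ restricts to a homeomorphism on the fixed set, $\bar{F}$ is a codimension-one boundary face of $\bar{M}$. Examining the slice representation of $G$ on the normal space to $F$ at a point $q \in F$, the normal sphere $S^k$ must have zero-dimensional quotient, so $G$ acts transitively on it with $S^k \cong G/H$; this identifies the principal orbit. For (i) I would then apply the Soul Theorem \ref{t:soulthm} to $\bar{M}$ using $\dist(\bar{F}, \cdot)$: because $\curv(\bar{M}) > 0$ the soul is a single point $\bar{p}$, the unique point at maximal distance from $\bar{F}$, and its preimage $G(p) = \pi^{-1}(\bar{p})$ is the unique orbit at maximal distance from $F$.

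For (ii), the Slice Theorem \ref{l:sliceinf} gives a neighbourhood of $G(p)$ of the form $G \times_{G_p} K\nu_p$, and since $M$ is a manifold the normal directions $\nu_p$ form a round sphere $S^l$ (Proposition \ref{p:normal_space}). The Soul Theorem identifies $\partial \bar{M} = \bar{F}$ with $\Sigma_{\bar{p}} \cong \nu_p / G_p = S^l/G_p$, and since $\pi|_F$ is a homeomorphism this yields $F \cong S^l/G_p$. To see that every $G_p$-orbit in $S^l$ is principal, I would run the $G$-equivariant gradient flow of $\dist(F, \cdot)$: it carries $G(p)$ out to $F$ along radial geodesics, the isotropy being constant and equal to the principal isotropy $H$ along the interior of each such geodesic. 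The direction at $p$ tangent to such a geodesic therefore has $G_p$-isotropy $H$, and as these directions fill $S^l$, every $G_p$-orbit is principal of type $G_p/H$.

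For (iii) and (iv) I would work directly with the $G$-invariant function $\dist(F, \cdot)$ on $M$ and its gradient flow. Between $F$ and the soul orbit $G(p)$ there are no critical points, so this region has a product structure and all of its orbits are principal; by the transitivity established above the principal orbit is $G/H \cong S^k$, giving (iv). Taking $D(F) = \{\dist(F, \cdot) \leq c\}$ and letting $D(G(p))$ be the complementary tube about the soul orbit, the tubular neighbourhood theorem identifies each with the respective normal disk bundle, and they meet along the common level set $E$, producing the $G$-equivariant decomposition $M = D(F) \cup_E D(G(p))$ of (iii).

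The step I expect to be the main obstacle is (ii): transporting orbit-type information from the soul orbit out to $F$ so as to conclude simultaneously the principality of every $G_p$-orbit on $S^l$ and the diffeomorphism $F \cong S^l/G_p$. The delicate point is controlling the gradient flow at the two ``soul'' sets $F$ and $G(p)$, where it degenerates, and verifying that the resulting identifications respect the smooth structures.
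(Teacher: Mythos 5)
The paper does not actually prove this statement: Theorem \ref{t:fphstructure} is recalled verbatim from \cite{GS2} as background, and the paper's own contribution is the Alexandrov analogue, Theorem \ref{t:expandedC}. That said, your outline follows essentially the same strategy as both the original proof in \cite{GS2} and the paper's proof of Theorem \ref{t:expandedC}: pass to the orbit space, apply the Soul Theorem \ref{t:soulthm} to $\dist(\bar{F},\cdot)$ to obtain the unique soul orbit, use the slice representation at a fixed point to identify the principal orbit $G/H$ with the normal sphere to $F$, and use the regularity of $\dist(F,\cdot)$ away from $F\cup G(p)$ to obtain the product region and the two-disk-bundle decomposition. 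So the approach is the right one, and (i), (iii) and (iv) are in order modulo standard facts you correctly invoke.

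The one genuine gap is the step you yourself flagged, in (ii). Your argument that every $G_p$-orbit in $S^l$ is principal rests on the claim that the initial directions at $p$ of minimal geodesics from $p$ to $F$ fill all of $S^l$; as stated this is unjustified --- the fact that $p$ is at maximal distance from $F$ only makes such directions $\pi/2$-dense in $\Sigma_p M$, not all of $S^l$ --- and, as a minor point, the gradient flow of $\dist(F,\cdot)$ runs \emph{away} from $F$, so it cannot ``carry $G(p)$ out to $F$'' (you would want the flow of $\dist(G(p),\cdot)$, or reversed flow lines). The clean repair is to reverse the logical order of (ii) and (iv): first, the absence of critical points of $\dist(\bar{F},\cdot)$ on $\bar{M}\setminus(\bar{F}\cup\{\bar{p}\})$, which is part of the Soul Theorem package in positive curvature, gives the equivariant product structure on $M\setminus(F\cup G(p))$; since a punctured tube around $F$ consists entirely of orbits of type $G/H$ by your slice computation at $q\in F$, every orbit in $M\setminus(F\cup G(p))$ is principal, which is (iv). Then (ii) follows immediately: for any $v\in S^l$ the point $\exp_p(tv)$ lies in $M\setminus(F\cup G(p))$ for small $t>0$ and has isotropy $(G_p)_v$, which is therefore a principal isotropy group, i.e.\ conjugate to $H$; no filling claim about geodesics to $F$ is needed. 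The identification $F\cong S^l/G_p\cong\Sigma_{\bar{p}}\bar{M}=\partial\bar{M}$ then goes through as you describe.
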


This result is somewhat similar to the situation of positively curved 
Riemannian manifolds of cohomogeneity one, which also admit a decomposition as a union of disk bundles. A similar structure is observed when passing to 
positively curved Alexandrov spaces of cohomogeneity one, that is, they admit a decomposition as a union of more general cone bundles \cite{GGS}.

One might expect, therefore, that positively curved fixed-point homogeneous Alexandrov spaces would also admit a decomposition as  a union of cone bundles. However, when the normal spheres of Theorem \ref{t:fphstructure} are replaced by more general spaces of positive curvature, the additional possibilities
mean that the
structure of the orbit space can be more complicated than in the Riemannian setting. In particular, part (ii) of the result fails and non-principal orbits may appear in the complement of $F \cup G(p)$.

\begin{example}
Consider $\Sigma \rrr P^2$ with the suspended $T^1$ action. 
$\Fix(\Sigma \rrr P^2; T^1)=I$, 
and the soul orbit is $T^1(p)\cong S^1/\zzz_2$. 
 At a point in the soul orbit, the $\zzz_2$ isotropy action on the circle of normal directions fixes two points, giving rise to an $S^2$ 
with $\zzz_2$ isotropy. The $S^2$ intersects 
 the endpoints of $I$ 
in its antipodes.
Although $\Sigma \rrr P^2$ can be written as the union of two cones, it is not a union of cone bundles over 
$\Fix(\Sigma \rrr P^2; T^1)$ and the soul orbit $T^1(p)$.
\end{example}

Theorem \ref{t:expandedC} below, an expanded version of Theorem C from the Introduction, provides a description of positively curved fixed-point homogeneous Alexandrov spaces as 
a join of a space of directions and a compact, connected Lie group, $G$,
modulo a subgroup $K\subset G$. This provides an alternative to the disk-bundle method of viewing fixed-point homogeneous Riemannian manifolds of positive curvature. 

\begin{theorem}\label{t:expandedC} 
Let a compact Lie group $G$ act isometrically and fixed-point homogeneously on $X^n$, a compact $n$-dimensional Alexandrov space of positive curvature and assume that $X^G\neq \emptyset$.
If $H \subset G$ is the principal isotropy and $F$ is the component of $X^G$ with maximal dimension then the following hold:
\begin{itemize}
\item[(i)] There is a unique orbit $G(p)\cong G/G_p$ at maximal distance from $F$ (the ``soul" orbit).
\item[(ii)] All principal $G_p$-orbits in $\nu$, the normal space of directions to $G(p)$ at $p$,   are homeomorphic to $G_p/H$.  Moreover $F$ is homeomorphic to $\nu / G_p$.
\item[(iii)] The space $X$ is $G$-equivariantly homeomorphic to  
\bdm
(\nu *G)/G_p,
\edm
where $G_p$ acts on the left on $\nu*G$, the action on $\nu$ being the isotropy action at $p$ and the action on $G$ being the inverse action on the right.
The $G$-action is induced by the left action on $\nu * G$ given by the join of the trivial action and the left action.
\item[(iv)] The principal orbits in $X\setminus (F \ \cup \ G(p))$ are homeomorphic to $\nu(F)\cong G/H$, where $\nu(F)$ is the positively curved space of normal directions to $F$.
\end{itemize}
\end{theorem}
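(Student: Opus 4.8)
The plan is to reduce everything to the geometry of the quotient $\bar{X} = X/G$, which is again a compact Alexandrov space with $\curv > 0$, and then to transport the resulting structure back up to $X$ by means of the Slice Theorem \ref{l:sliceinf}. First I would observe that fixed-point homogeneity forces $\bar{F} = \pi(F)$ to be a codimension-one face of $\partial \bar{X}$. Applying the Soul Theorem \ref{t:soulthm} to $\bar{X}$ with the distance function from the face $\bar{F}$ (using the remark there that any union of faces may replace $\partial \bar{X}$), and using $\curv(\bar{X}) > 0$, the soul is a single point $\bar{p}$, namely the unique point at maximal distance from $\bar{F}$. Its preimage is a single orbit $G(p) \cong G/G_p$, the soul orbit, at maximal distance from $F$; this establishes (i).

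Next I would set up the infinitesimal data. By Proposition \ref{p:normal_space}, $\Sigma_p X = S_p * \nu$ with $S_p$ the unit tangent sphere to $G(p)$ and $\nu$ the normal space of directions, while the Slice Theorem \ref{l:sliceinf} gives an equivariant homeomorphism from a neighborhood of $G(p)$ onto $G \times_{G_p} K\nu$. For (ii) I would run the radial (quasigeodesic) curves $\gamma_\xi$ emanating from $p$ in the normal directions $\xi \in \nu$; since these increase $\dist(G(p), \cdot)$ and $F$ sits at maximal distance, the endpoint map $\xi \mapsto \gamma_\xi(1) \in F$ is defined, and because $k \in G_p$ carries $\gamma_\xi$ to $\gamma_{k\xi}$ while fixing its endpoint (an element of $F \subset X^G$), this map descends to a homeomorphism $\nu/G_p \cong F$. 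A principal point on such a curve has isotropy $H$ contained in $G_p$, so its $G_p$-orbit in $\nu$ is $G_p/H$, giving the remaining claim of (ii).

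For (iii) I would assemble the global model through the map $\Psi : \nu * G \ra X$, $\Psi(t\,\xi \oplus (1-t)\,g) = g \cdot \gamma_\xi(t)$, where $\gamma_\xi$ is reparametrized so that $\gamma_\xi(0) = p$ (so the $G$-end lands on $G(p)$) and $\gamma_\xi(1) \in F$ (so the $\nu$-end lands on $F$, independently of $g$ since $F$ is $G$-fixed). A direct check using $\gamma_{k\xi} = k\cdot\gamma_\xi$ shows that $\Psi$ is invariant under the $G_p$-action given by the isotropy action on $\nu$ and right-inverse multiplication on $G$, and is equivariant for the left $G$-action on the $G$-factor; hence it descends to a continuous $G$-equivariant map $\bar{\Psi} : (\nu * G)/G_p \ra X$. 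As the domain is compact and $X$ is Hausdorff, it suffices to prove $\bar{\Psi}$ is a bijection. Surjectivity follows since the lifted Sharafutdinov flow of $\dist(F, \cdot)$ sweeps every point of $X$ onto $G(p)$ along such curves; injectivity I would obtain by matching the two halves of the join with the two known local models, namely the Slice-Theorem cone bundle $G \times_{G_p} K\nu$ around $G(p)$ (the half $t \leq \tfrac{1}{2}$) and the normal cone bundle around $F$ (the half $t \geq \tfrac{1}{2}$), glued over the principal stratum $t = \tfrac{1}{2}$ via the Covering Homotopy Theorem \ref{t:cht}. Part (iv) then follows by a dimension count: the slice $K\nu(F)$ at a point of $F$ has one-dimensional $G$-quotient by fixed-point homogeneity, forcing $\nu(F)/G$ to be a point, so $G$ acts transitively on the positively curved space $\nu(F)$ and the principal orbit is $\nu(F) \cong G/H$.

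The hard part will be the global injectivity and the gluing in (iii). The gradient exponential in an Alexandrov space is only Lipschitz and its radial curves need be neither unique nor non-branching, so $\Psi$ cannot be shown to be a homeomorphism by naive means; the genuine content is that the Slice-Theorem cone bundle around $G(p)$ and the normal cone bundle around $F$ fit together into a single join bundle. I expect the decisive step to be verifying that over the open principal stratum the orbit space carries the product structure required by the Covering Homotopy Theorem \ref{t:cht} — which is precisely where positive curvature (through the point-soul of $\bar{X}$ and the resulting collar/cone structure of $\bar{X}$ between $\bar{F}$ and $\bar{p}$) is indispensable.
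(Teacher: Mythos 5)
Your parts (i), (ii) and (iv) track the paper's argument (Soul Theorem \ref{t:soulthm} applied to $\bar X$ retracting from the faces covered by $F$; identification of $F$ with $\nu/G_p$ as the space of directions at the soul of $\bar X$; transitivity of $G$ on $\nu(F)$ at an $F$-regular point), but the injectivity step you propose for (iii) rests on a decomposition that fails in exactly this setting. You want to cover $X$ by two cone bundles, one over $G(p)$ and one over $F$, glued over an intermediate level set via the Covering Homotopy Theorem \ref{t:cht}. But no neighborhood of $F$ of the form of a cone bundle with fiber $K\nu(F)$ exists in general: in $\Sigma\rrr P^2$ with the suspended circle action, $F$ is an arc whose endpoints have space of directions $\rrr P^2$, which is not a join $S^0 * \nu$, so no neighborhood of those endpoints splits as $\rrr\times K\nu(F)$. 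The paper makes precisely this point before stating Theorem C: the union-of-cone-bundles decomposition of Theorem \ref{t:fphstructure}(iii) survives only under the additional hypothesis that every orbit in $X\setminus(F\cup G(p))$ is principal. Relatedly, your phrase ``glued over the principal stratum $t=1/2$'' is off, since non-principal orbits can stretch all the way from $F$ to the soul orbit, so intermediate level sets need not consist of principal orbits.

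The repair is to use one cone bundle, not two. Because the soul of $\bar X$ is a point, the Sharafutdinov retraction from the faces covered by $\bar F$ can be substituted for the local strictly concave function in the construction at the end of Section \ref{s:slice}; this upgrades the Slice Theorem \ref{l:sliceinf} to a global statement, namely that all of $X\setminus F$ (not just a small metric neighborhood of $G(p)$) is $G$-equivariantly homeomorphic to $G\times_{G_p}K\nu=(\nu\times G\times(0,1])/G_p$. That single identification settles injectivity and surjectivity at once, with no gluing along a middle level set. What remains is only to attach $F\cong\nu/G_p$ as the level $t=0$ and verify continuity of the resulting map $X\cong(\nu\times G\times[0,1])/G_p=(\nu*G)/G_p$, which is the content of the paper's proof of (iii). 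Your explicit map $\Psi$ built from radial curves is a reasonable heuristic for why the join model is correct, but, as you yourself observe, it cannot carry the homeomorphism claim, and the two-bundle gluing you fall back on is not available.
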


For every positively curved space $\nu$, its join to a positively curved homogeneous $G$-space yields a fixed-point homogeneous $G$-space. It follows that the set of all simply-connected positively curved  spaces of dimension $n$ can be placed in bijection with the fixed-point homogeneous simply-connected positively curved spaces of dimension $m$ for any $m > n$. This is in stark contrast to the Riemannian result, where only the compact rank one symmetric spaces arise.

For the proof of Theorem C, we introduce a notion of regular points in extremal sets. We say that a point $p\in E^k \subset X^n$ is \emph{$E$-regular} when $\Sigma_p E=S^k_1$, that is, the space of directions tangent to $E$ at the point $p$ is isometric to a unit round sphere.

\begin{lemma} Let $X$ be an Alexandrov space with boundary and $E=\d X$. Then $E$ contains a dense set of $E$-regular points.\end{lemma}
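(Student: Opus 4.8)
The plan is to recognise the boundary $E=\d X$ as an Alexandrov space in its own right and to pull the (already available) density of regular points back to it. By Petrunin's boundary theorem (cf.\ \cite{Pet3, PP1}), $E=\d X$ equipped with its intrinsic length metric is a complete Alexandrov space of dimension $n-1$ with the same lower curvature bound $\curv \geq k$. Since the regular points of any Alexandrov space form a dense subset, the set of $p \in E$ at which the \emph{intrinsic} space of directions $\Sigma^{\mathrm{int}}_p E$ is isometric to a unit round sphere is dense in $E$. (As $E$ is an extremal set, its intrinsic and subspace topologies agree, so ``dense'' is unambiguous.)

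The content of the lemma is then to match this intrinsic notion of regularity with the one in the definition of $E$-regular, namely that the \emph{tangent} space of directions $\Sigma_p E \subset \Sigma_p X$ be a round sphere. First I would establish the identification $\Sigma^{\mathrm{int}}_p E \cong \Sigma_p E$. The cleanest route is through tangent cones: the formation of the tangent cone commutes with taking the boundary, so $T_p(\d X)=\d(T_p X)$, while $\d(T_p X)=\d(K\Sigma_p X)=K(\d \Sigma_p X)=K\,\Sigma_p E$, using $\d \Sigma_p X=\Sigma_p(\d X)$. On the other hand, because $E$ is itself Alexandrov, $T_p E = K\,\Sigma^{\mathrm{int}}_p E$. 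Comparing the two cone descriptions of $T_p E$ yields $\Sigma^{\mathrm{int}}_p E \cong \Sigma_p E$ isometrically. Equivalently, one may argue directly from the fact that $E=\d X$ is totally quasigeodesic (Section~\ref{ss:qgrc}): every intrinsic shortest path in $E$ is a quasigeodesic of $X$, so the unit directions of such paths are exactly the directions in $\Sigma_p X$ tangent to $E$, and their intrinsic angles agree to first order with the ambient angles.

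Granting this identification, each intrinsic-regular point $p$ satisfies $\Sigma_p E \cong \Sigma^{\mathrm{int}}_p E$, a unit round sphere, and is therefore $E$-regular; since the intrinsic-regular points are dense in $E$, so are the $E$-regular points. The main obstacle is precisely the identification $\Sigma^{\mathrm{int}}_p E \cong \Sigma_p E$: \emph{a priori} the intrinsic metric on $E$ may be strictly larger than the restriction of the metric of $X$, so one must verify that no angular distortion survives in the infinitesimal limit. This is exactly where the totally quasigeodesic property of $\d X$ (and the commutation of the tangent cone with the boundary operation) does the work, and it is the step I would write out in full detail.
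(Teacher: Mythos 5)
Your first step is a genuine gap: the assertion that $E=\d X$, equipped with its intrinsic length metric, is an Alexandrov space with the same lower curvature bound is not a theorem of Petrunin or anyone else --- it is the well-known \emph{boundary conjecture} of Alexandrov geometry, which is open. Neither of the sources you cite proves it (\cite{Pet3} in fact records it as an open problem), and without it everything downstream collapses: you cannot invoke density of regular points of $E$ as an Alexandrov space, nor write $T_pE=K\,\Sigma^{\mathrm{int}}_pE$, since both presuppose that $E$ carries an Alexandrov structure. The secondary step --- identifying the intrinsic space of directions $\Sigma^{\mathrm{int}}_pE$ with the tangential one $\Sigma_pE\subset\Sigma_pX$ --- is also only sketched and flagged as the ``main obstacle,'' but the decisive failure is the appeal to the boundary conjecture at the outset.

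The paper's proof is designed precisely to sidestep this difficulty. Instead of working with $\d X$ intrinsically, it forms the double $D(X)=X\cup_{\d X}X$, which \emph{is} known to be an Alexandrov space with the same lower curvature bound by Perelman's doubling theorem \cite{P}. The set of regular points of $D(X)$ is dense and convex, so one finds regular points of $D(X)$ lying on $\d X$ arbitrarily close to any prescribed boundary point (for instance, a geodesic joining two nearby regular points on opposite sides of $\d X$ consists of regular points and must meet $\d X$). At such a point $p$ the space of directions $\Sigma_pD(X)$ is the unit round sphere, and $\Sigma_pE$ sits inside it as the common boundary of the two copies of $\Sigma_pX$, forcing $\Sigma_pE$ to be a unit round sphere of one lower dimension; hence $p$ is $E$-regular. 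If you want to salvage your argument, replace the appeal to the boundary conjecture with this doubling construction.
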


\begin{proof}Consider $D(X)=X\cup_{\d X} X$, the double of $X$, formed by identifying two copies of $X$ along their common boundaries, $\d X$. Since $D(X)$ is also an Alexandrov space 
\cite{P}, it has a dense set of regular points, and, since that set is convex, $\d X \subset D(X)$ also has a dense set of regular points. Then in $X$ these points will be $E$-regular.
\end{proof}

\begin{proof}[Proof of Theorem C] The maximal connected component of the fixed point set, $F$, has codimension 1 in $X/G$ and so corresponds to a union of faces in the boundary. Part (i) now follows from the Soul Theorem  \ref{t:soulthm}  applied to the quotient space $X/G$, a positively curved Alexandrov space, retracting from the faces which make up $F$. Let $\nu$ be the space of normal directions to the orbit $G(p)$. Part (ii) follows from the Slice Theorem \ref{l:sliceinf}, noting also that $F$ is homeomorphic to the space of directions at the soul point of $X/G$, which is $\nu / G_p$.

For part (iii), $X \setminus F$ is homeomorphic to $(K(\nu) \times G) / G_p$ by the Slice Theorem, and this homeomorphism is $G$-equivariant, where $G$ acts trivially on $\nu$ and 
on $G$ by its left action. We may write $K(\nu) \times G$ as $\nu \times G \times (0,1]$, where $\nu \times G \times \{ 1 \}$ is identified to $G$. The set $F$ is, by part (ii), homeomorphic to $\nu / G_p$, and it is fixed by $G$, so the entire space $X$ is in fact homeomorphic to $( \nu \times G \times [0,1] ) / G_p$, where $\nu \times G \times \{ 0 \}$ has been identified to $\nu$ and $\nu \times G \times \{ 1 \}$ to $G$, and the homeomorphism is $G$-equivariant. 

For part (iv), let $\bar{p} \in F \subset X/G$ be $F$-regular, and hence the image in $\nu / G_p$ of a principal $G_p$-orbit. Then at $p \in F \subset X$ we have an isometry $\Sigma_p X = S^{k-1} * \nu(F)$ where $k = \dim(F)$ and $\nu(F)$ is the normal space to $F$. There is a neighborhood of $p$ which is comprised entirely of principal orbits and of fixed points in $F$. Since by assumption $G$ acts transitively on $\nu(F)$, it follows that  $\nu(F)$ is given by $G/H$.
Hence, $G/H$ has curvature bounded below by 1, completing the proof of the theorem.
\end{proof}

In the special case where $H\normal G_p\subset G$, we have the following corollary.

\begin{corollary}\label{c:fph} Let $G$ act isometrically and fixed-point homogeneously on $X^n$, an $n$-dimensional, closed Alexandrov space of positive curvature and assume that $X^G\neq \emptyset$. Suppose further that $H\normal G_p\subset G$, where $H$ is the principal isotropy of the $G$-action and $G_p$ is the isotropy subgroup of the ``soul" orbit. Then $X^n$ is equivariantly homeomorphic to 
\bdm
(\nu * G/H)/K,
\edm
where $\nu$ is the normal space of directions to $G/G_p$ and $K \cong G_p / H$. That is, $X^n$ is homeomorphic to the quotient of the join of two positively curved Alexandrov spaces.
\end{corollary}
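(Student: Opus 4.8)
The plan is to start from the description of $X$ furnished by Theorem \ref{t:expandedC}(iii), namely the $G$-equivariant homeomorphism $X \cong (\nu * G)/G_p$, in which $G_p$ acts on the join by the isotropy action on $\nu$ and by $g \mapsto g g_0^{-1}$ on $G$, while $G$ acts on the left by the join of the trivial action on $\nu$ with left translation on $G$. The goal is then to exploit the hypothesis $H \normal G_p$ to factor this quotient through $H$ in two stages, writing $(\nu * G)/G_p \cong \big((\nu * G)/H\big)/(G_p/H)$ and identifying each piece; the residual group $G_p/H$ will be the promised $K$.

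The key observation, and the step I expect to be the main obstacle, is that \emph{$H$ acts trivially on $\nu$}. Since the principal $G_p$-orbits in $\nu$ are all of type $G_p/H$ by Theorem \ref{t:expandedC}(ii), the group $H$ is a representative of the principal isotropy of the $G_p$-action on $\nu$. The principal isotropy groups of this action are mutually conjugate in $G_p$; as $H \normal G_p$, every such conjugate equals $H$, so the isotropy at each principal direction is exactly $H$, and in particular $H$ fixes every principal direction. The principal directions form a dense subset of $\nu$, while the fixed-point set of the continuous $H$-action is closed, so $H$ fixes all of $\nu$. Consequently the $G_p$-action on $\nu$ descends to an action of $K := G_p/H$, and one checks directly (using $g_0 H g_0^{-1} = H$) that the action $g \mapsto g g_0^{-1}$ on $G$ descends to a well-defined $K$-action on the coset space $G/H$.

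With triviality on $\nu$ in hand, the first stage is immediate. Restricting the $G_p$-action on $\nu * G$ to $H$, the action is trivial on the $\nu$-end of the join and is right translation on the $G$-end, whose orbit space is $G/H$. Writing the join as $(\nu \times G \times [0,1])/{\sim}$ and passing to $H$-orbits factor-by-factor, the $\nu$-end is unchanged while the $G$-end becomes $G/H$, so that $(\nu * G)/H \cong \nu * (G/H)$, both $K$-equivariantly for the residual action and $G$-equivariantly for the left action (the left $G$-action commutes with the right $G_p$-action and so descends at each stage). The second stage is the general fact that for a normal subgroup of a compact Lie group one may quotient in stages, $Y/G_p \cong (Y/H)/(G_p/H)$; applying this to $Y = \nu * G$ gives
\[
X \;\cong\; (\nu * G)/G_p \;\cong\; \big(\nu * (G/H)\big)/K, \qquad K \cong G_p/H,
\]
with all identifications equivariant for the left $G$-action. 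Finally, $\nu$ is positively curved as a normal space of directions, and $G/H \cong \nu(F)$ is positively curved by Theorem \ref{t:expandedC}(iv), so $X$ is exhibited as the quotient of the join of two positively curved Alexandrov spaces, as claimed.
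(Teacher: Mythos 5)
Your proposal is correct and follows essentially the same route as the paper's proof: factor the quotient $(\nu * G)/G_p$ through $H$ in two stages, observe that $H$ acts trivially on $\nu$ because it is the principal isotropy of the $G_p$-action on $\nu$ and is normal in $G_p$, and conclude $(\nu*G)/H \cong \nu * (G/H)$ with residual group $K = G_p/H$. The only difference is that you supply the density/closedness argument for triviality of the $H$-action and the equivariance checks, which the paper leaves implicit.
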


\begin{proof} Write $K$ for $G_p / H$. By Theorem C, $X^n$ is homeomorphic to 
\bdm
(\nu * G)/G_p \cong ((\nu * G)/H)/K.
\edm
Since $H$ is the principal isotropy of the $G_p$-action on $\nu$, and is a normal subgroup of $G_p$, it acts trivially on $\nu$. Therefore we have 
\bdm
(\nu * G)/H \cong \nu * G/H,
\edm
and the result follows.
\end{proof}

Note that for any space $\nu$ of $\curv \geq 1$, we can join $\nu$ to any homogeneous $G$-space of $\curv \geq 1$ to obtain a positively curved Alexandrov space with a fixed-point homogeneous $G$-action. In this sense, we can think of fixed-point homogeneous spaces as being plentiful among positively curved Alexandrov spaces.

Observe, however, that if we restrict our attention to positively curved Riemannian $n$-dimensional manifolds and assume that $G_p\neq G$, then,  with the unique exception of the fixed-point homogeneous $Spin(9)$-action on $\textrm{Ca}P^2$, $H\normal G_p \subset G$. Hence, Corollary \ref{c:fph} allows us to  represent all such manifolds as
\bdm
M^n \cong (S^m * G/H)/ K \cong (S^m* S^l)/K \cong S^{m+l+1}/K,
\edm
where $K \cong G_p/H$, and $K$ is one of 
either $SU(2)$, $N_{SU(2)}(T^1)$, $T^1$, or a
finite subgroup of $O(n+1)$ (cf. \cite{Br}). That is, all these manifolds are spheres 
or the base of a fibration whose total space is a sphere. $\textrm{Ca}P^2$ cannot be written as the base of such a fibration, and its decomposition
as a join is given by
\bdm
(S^7*Spin(9))/Spin(8).
\edm

We also note that in the special case where $G_p=G$, the principal
isotropy subgroup is almost always \emph{not} normal in $G$.
This is not an issue for the Riemannian case, though, because the only
groups that can act principally on $n$-spheres must either act
transitively, in which case, the decomposition as a join gives us that
\bdm
(\nu * G)/G_p \cong S^0 * G/H \cong S^0 *S^l = S^{l+1},
\edm
or they must act
freely, as in the examples discussed above.

Finally, we note that in analogy to the Riemannian case, we can decompose
a fixed-point homogeneous, positively curved Alexandrov space
as a union of cone bundles when we assume that all orbits in the complement of
the fixed point set $F$ and the soul orbit $G(p)$ are principal. 

\section{Maximal Symmetry Rank}\label{s:msr}

In this section we show that, as in the Riemannian case, the bound on the symmetry rank of a positively curved Alexandrov space of dimension $n$ is $\lfloor \frac{n+1}{2}\rfloor$, and that when the bound is achieved some circle subgroup acts fixed-point homogeneously. We then inductively use the join description of fixed-point homogeneous spaces to show that all such spaces are quotients of spheres, as stated in Theorem D.

\begin{theorem}\label{t:upperbound}  Let $T^k$ act isometrically and (almost) effectively on $X^n$, a positively curved Alexandrov space. Then, 
\bdm 
k\leq \lfloor \frac{n+1}{2}\rfloor.
\edm
Further,   in the case of equality, for some $T^1\subset T^k$, $\codim(\Fix(X^n; T^1))=2$.
\end{theorem}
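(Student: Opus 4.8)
The plan is to prove both statements simultaneously by downward induction on the dimension $n$, at each step replacing $X^n$ by a positively curved Alexandrov space of dimension $n-1$ carrying an (almost) effective torus action of the same or one smaller rank. There are two ways to reduce dimension: passing to the space of directions $\Sigma_p$ at a point $p$ fixed by all of $T^k$ (again an Alexandrov space with $\curv\geq 1$ of dimension $n-1$), and passing to the quotient $X/T^1$ by a circle subgroup (again positively curved, of dimension $n-1$ whenever $T^1$ acts non-trivially). Which reduction is available is dictated by the parity of $n$ through Lemma \ref{l:fixedpoint} and Corollary \ref{c:odddim}. Before running the induction I would record the bookkeeping that both reductions preserve (almost) effectiveness and rank: for $\Sigma_p$, an element of $T^k$ acting trivially on $\Sigma_p$ fixes $p$ together with every direction at $p$, hence fixes a neighborhood of $p$ and so all of $X$, placing it in the finite ineffective kernel; for $X/T^1$, an element whose image acts trivially preserves every $T^1$-orbit, and a local-constancy argument on the principal stratum shows it differs from an element of $T^1$ by an element of the ineffective kernel. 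Thus $T^k$ (resp.\ $T^k/T^1$) descends to an almost effective action of rank $k$ (resp.\ $k-1$).

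For the inequality: if $n=2m$, Lemma \ref{l:fixedpoint} gives a $T^k$-fixed point $p$, and the induced action on $\Sigma_p^{2m-1}$ yields, by induction, $k\leq\lfloor\frac{2m}{2}\rfloor=m=\lfloor\frac{n+1}{2}\rfloor$. If $n=2m+1$, Corollary \ref{c:odddim} leaves two cases. If $T^k$ has a fixed point $p$, the even-dimensional $\Sigma_p^{2m}$ gives $k\leq\lfloor\frac{2m+1}{2}\rfloor=m<m+1$. If instead some $T^1\subset T^k$ has a circle orbit, then $T^1$ acts non-trivially, so $\dim(X/T^1)=2m$, and the induced $T^{k-1}=T^k/T^1$ action yields $k-1\leq m$, i.e.\ $k\leq m+1=\lfloor\frac{n+1}{2}\rfloor$. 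This closes the induction for the bound, with the bases $n\leq 1$ checked directly.

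For the equality statement, observe that a circle subgroup has $\codim\Fix(X;T^1)=2$ precisely when it acts fixed-point homogeneously, so this is the property to propagate. In the even case $n=2m$, equality $k=m$ forces maximal rank on $\Sigma_p^{2m-1}$, so by induction some $C\subset T^k$ has $\codim\Fix(\Sigma_p;C)=2$. Since $p$ is $T^k$-fixed, $\Fix(X;C)$ is conical at $p$ with $\Sigma_p\Fix(X;C)=\Fix(\Sigma_p;C)$, whence $\dim\Fix(X;C)=\dim\Fix(\Sigma_p;C)+1=2m-2$, giving $\codim\Fix(X;C)=2$. In the odd case $n=2m+1$, equality $k=m+1$ excludes the $T^k$-fixed-point alternative (which only yields $k\leq m$), forcing a circle orbit; reducing to $\bar X=X/T^1$ of dimension $2m$ at maximal rank $m$, induction produces a circle $\bar C\subset T^k/T^1$ with $\codim\Fix(\bar X;\bar C)=2$.

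The final and most delicate step is to convert $\bar C$ into an honest circle in $T^k$ with codimension-two fixed-point set. I would let $T^2\subset T^k$ be the preimage of $\bar C$ (a $2$-torus containing $T^1$) and set $\tilde F=\pi^{-1}(\Fix(\bar X;\bar C))=\{\,x:T^2x=T^1x\,\}$, which contains a component of dimension $(2m-2)+1=n-2$ on which $T^1$ has circle orbits. Over the principal stratum of this component the identity component of the isotropy is a fixed circle $C\subset T^2$, so $\Fix(X;C)$ contains a set of dimension $n-2$; as $C$ is non-trivial its fixed-point set is proper, and Proposition \ref{p:evencodim} forces its codimension to be even, hence exactly $2$. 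I expect this odd-case transfer to be the main obstacle, both because one must secure a component of $\tilde F$ with \emph{constant} stabiliser circle (via local constancy of torus isotropy on strata) and because the even-codimension constraint of Proposition \ref{p:evencodim} is exactly what upgrades the a priori estimate $\dim\Fix(X;C)\geq n-2$ to equality; the effectiveness bookkeeping of the first paragraph is the other point requiring care.
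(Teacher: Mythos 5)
Your induction has the same skeleton as the paper's (reduce dimension, use Lemma \ref{l:fixedpoint} and Corollary \ref{c:odddim} to locate a fixed point or circle orbit, propagate the codimension-two statement), and your even-dimensional step --- passing to the isotropy action on $\Sigma_p$ at a $T^k$-fixed point --- is exactly the paper's. The odd-dimensional step is where you genuinely diverge: the paper never passes to the quotient $X/T^1$. Instead it uses the fact, cited from \cite{GGG}, that the isotropy group acts effectively on the normal space to its orbit, and applies the inductive hypothesis to the rank-$(k-1)$ isotropy action of $T^k_p$ on $\nu_p$, the $(n-2)$-dimensional normal space to the circle orbit. The payoff is that the circle subgroup produced by induction already sits inside $T^k_p\subset T^k$, and the Slice Theorem \ref{l:sliceinf} transfers ``codimension two in $\nu_p$'' to ``codimension two in $X$'' in one line, so the entire lifting apparatus you build in your last paragraph is unnecessary. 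Your route is correct but buys that simplification at the cost of two extra verifications: the almost-effectiveness of the induced $T^k/T^1$-action on $X/T^1$ (your constancy-of-principal-isotropy argument is the right one for abelian groups) and the recovery of an honest circle $C\subset T^2$ from $\bar C$. Your sketch of the latter works --- there are only countably many circle subgroups of $T^2$, so the dimension-sum theorem produces one whose fixed set meets $\tilde F$ in dimension $n-2$, and Proposition \ref{p:evencodim} pins the codimension at $2$ --- but you should also dispose of the degenerate case in which $\Fix(\bar X;\bar C)$ lies entirely in the image of $\Fix(X;T^1)$: there $\tilde F=\Fix(X;T^2)$ has dimension only $n-3$, and one concludes instead from $\codim\Fix(X;C)\leq 3$ together with evenness. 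Finally, the paper opens with a one-sentence reduction to the closed orientable case (soul point for spaces with boundary, ramified double cover for non-orientable ones); since your induction feeds itself spaces of directions and quotients that may acquire boundary, you should either include that reduction or note explicitly that the cited lemmas apply without those hypotheses.
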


\begin{proof}
If $X$ has boundary then the action is determined by the isotropy at the soul, and if $X$ is non-orientable the action will lift to the ramified orientable double cover. Therefore, we may assume that $X$ is closed and orientable.

The proof is by induction on the dimension $n$ of the space. When $n = 1$, the maximal torus action is given by
$T^1$ acting on $S^1$, fixing the empty set, $\emptyset$, which has codimension 2. The crux of the induction step is that where a group acts effectively, the action of any isotropy group on the normal space to its orbit must also be effective \cite{GGG}. 

If $n=2k-1$, then by the inductive hypothesis, an effective action of $T^k$
cannot fix points, and so Corollary \ref{c:odddim} implies that the action has a circle orbit. If $n=2k$, then Lemma \ref{l:fixedpoint} implies that the action has a fixed point.

Aiming for a contradiction, we suppose that $T^{k+1}$ acts on $X^n$, with $n=2k-1$ or $2k$. Consider the isotropy action at a circle orbit or at a fixed point, respectively. By the inductive hypothesis, this action cannot be effective. This proves the bound on the rank. If $T^k$ acts, we again consider the isotropy action at a circle orbit or fixed point. This action is also of maximal rank and so, by the inductive hypothesis, there is a circle subgroup of the isotropy which fixes a set of codimension 2 in the normal space, and hence,
a set of codimension 2 in $X$.
\end{proof}

Before continuing with the proof of Theorem D, we present a few examples of how to construct positively curved Alexandrov spaces with maximal symmetry rank.

\begin{example} $S^n(1)$ has isometry group $O(n+1)$, and this group is clearly of maximal rank. 
The action of the maximal torus in $O(n+1)$ may be considered as 
the prototypical maximal rank action in positive curvature.\end{example}

\begin{example} Let $\Gamma$ be a finite subgroup of the maximal torus in $O(n+1)$. 
Clearly $S^n / \Gamma$ admits an action of maximal symmetry rank. If 
$\Gamma$  
acts freely, we obtain the lens spaces (including the odd-dimensional real projective spaces) \cite{Smith}. Note that in this category of examples, the
$2n$-dimensional spaces are simply suspensions of the
$(2n-1)$-dimensional spaces.\end{example}

It is worth noting at this point that the natural sufficient condition for $T^k$, a torus of maximal rank, to act on $S^n / H$ is that $T^k \subset N(H)$, the normalizer of the group. The following lemma shows that when $\dim(H) \leq 2$, so that $H$ is virtually abelian,
$T^k \subset N(H) \iff T^k \subset Z(H)$, and so the condition is equivalent to $H \subset Z(T^k)$.

\begin{lemma}\label{l:norm=cent}Let $G$ be a compact Lie group, and let $H \subset G$ be a closed virtually abelian subgroup. Then the connected components of the identity of the normalizer and centralizer of $H$ agree, that is, $N_0(H) = Z_0(H)$.\end{lemma}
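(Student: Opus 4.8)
The plan is to analyze the conjugation action of $N(H)$ on $H$ and show that its restriction to the connected group $N_0(H)$ is trivial. Since $Z(H)\subseteq N(H)$ always holds, one inclusion, $Z_0(H)\subseteq N_0(H)$, is immediate, and the whole content of the lemma is the reverse inclusion. Because $N_0(H)$ is connected, it suffices to prove that every element of $N_0(H)$ centralises $H$: this gives $N_0(H)\subseteq Z(H)$, and connectedness of $N_0(H)$ then upgrades this to $N_0(H)\subseteq Z_0(H)$.

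First I would record the structure of $H$. As $H$ is compact and virtually abelian, its identity component $H_0$ is a torus $T^m$ and the component group $H/H_0$ is finite. Conjugation defines a continuous homomorphism $\phi:N(H)\to\operatorname{Aut}(H)$ with $\ker\phi = Z(H)$. Since $H_0$ is characteristic in $H$, each automorphism restricts to an automorphism of the torus $H_0$, yielding a homomorphism $\operatorname{Aut}(H)\to\operatorname{Aut}(H_0)=GL(m,\zzz)$. The composite $N_0(H)\to GL(m,\zzz)$ has connected domain and discrete target, hence is constant; therefore $N_0(H)$ centralises $H_0$, which disposes of the torus part of $H$.

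It remains to control the component group, and this is where the substantive work lies. Since $N_0(H)$ is connected, its conjugation action on the finite set $H/H_0$ is trivial, so for each $g\in N_0(H)$ and $h\in H$ we may write $ghg^{-1}=h\,z_g(h)$ with $z_g(h)\in H_0$. Using that $N_0(H)$ already centralises $H_0$, a direct computation shows that for fixed $h$ the assignment $g\mapsto z_g(h)$ is a continuous homomorphism $N_0(H)\to H_0=T^m$. Choosing $N$ so that $h^N\in H_0$ and evaluating $g\,h^N g^{-1}$ in two ways then constrains the image of this homomorphism to a finite torsion subgroup of $T^m$; being the continuous image of a connected group, that image is trivial. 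Hence $z_g(h)=e$ for all $g$ and $h$, i.e.\ $ghg^{-1}=h$, which gives $N_0(H)\subseteq Z(H)$ and completes the argument.

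I expect the component-group step to be the genuine obstacle. The torus part is forced essentially for free by the discreteness of $GL(m,\zzz)$, but the ``twist'' $z_g(h)$ measuring how $N_0(H)$ displaces representatives of $H/H_0$ is only brought under control once one simultaneously exploits that $N_0(H)$ centralises $H_0$ and that each $h$ has finite order modulo $H_0$; it is precisely the interaction between the finite part of $H$ and the torus $H_0$ that requires care.
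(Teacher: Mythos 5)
Your reduction of the torus part and your verification that $g\mapsto z_g(h)$ is a homomorphism into $H_0$ are both correct, and you are right that the component group is where the difficulty lies; but the step that is supposed to finish the argument fails. Expanding $gh^Ng^{-1}=(hz_g(h))^N$ gives $h^N\prod_{j=0}^{N-1}h^{-j}z_g(h)h^{j}$, not $h^Nz_g(h)^N$: the conjugates $h^{-j}z_g(h)h^{j}$ need not equal $z_g(h)$, because nothing in the hypotheses forces $h$ to centralize $H_0$. Writing the (finite-order) conjugation action of $h$ on $H_0\cong T^m$ as a matrix $A\in GL(m,\zzz)$, the constraint you obtain on $v=z_g(h)$ is $(I+A+\cdots+A^{N-1})v=0$, and the solution set is a closed subgroup of $T^m$ that can be positive-dimensional (take $A=-I$ and $N=2$). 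So the image of $g\mapsto z_g(h)$ is not confined to a torsion subgroup, and the argument collapses at exactly the point you flagged as the genuine obstacle. Your computation does go through whenever $H_0$ is central in $H$ --- in particular when $H$ is abelian --- since then $(hz)^N=h^Nz^N$ and the image of the connected group $N_0(H)$ lands in the $N$-torsion points of $T^m$, hence is trivial.

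In fact no repair is possible, because the lemma as stated is false, and the failure is precisely the phenomenon above. Take $G=SU(2)$ and $H=N(T^1)=T^1\cup jT^1$, the normalizer of a maximal torus: $H$ is closed and virtually abelian ($H_0=T^1$ has index $2$), and $N(H)=H$ because anything normalizing $H$ normalizes its identity component, so $N_0(H)=T^1$; but $Z(H)=\{\pm I\}$, so $Z_0(H)$ is trivial. Here $z_g(j)=g^{-2}$, so your homomorphism is actually surjective onto $H_0$. For comparison, the paper's own proof is a one-line version of your strategy --- it maps $N(H)$ into $\operatorname{Aut}(H)$ and asserts that $\operatorname{Aut}(H)$ is discrete because $H_0$ is a torus --- and the same example defeats it, since $\operatorname{Inn}(H)\cong H/\{\pm I\}$ is already one-dimensional. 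Both the statement and both proofs are rescued under the stronger hypothesis that $H_0$ is central in $H$ (e.g.\ $H$ abelian), which is what the application in the surrounding text appears to require.
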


\begin{proof} There is a smooth map $N(H) \ra Aut(H)$ sending any element to conjugation by that element. Since $H_0$ is, by definition, abelian, it is a torus or a point, and so $Aut(H)$ is discrete. Therefore $N_0(H)$ is mapped to the identity automorphism, and we have $N_0(H) \subset Z_0(H)$. The reverse inclusion is trivial.\end{proof}

\begin{example} Let $\Gamma\subset Z(T^n)\subset O(2n+1)$. Since 
$Z(T^n)\neq T^n$ in $O(2n+1)$, we may choose $\Gamma$ to be a finite subgroup not contained in $T^n$.
If we require $\Gamma$ to act freely, it must be the antipodal map and we obtain $\rrr P^{2n}$. 
However, if we allow $\Gamma$ to have non-trivial isotropy, 
we will obtain spaces which are locally non-orientable, such as $S^4 / \zzz_2 \cong \rrr P^2 * S^1$, where the involution fixes a circle, as well as 
spaces with boundary, such as $D^n=S^n/\zzz_2$, where $\zzz_2$ is simply a reflection.
\end{example}

\begin{example}Let $\Gamma \subset T^{n+1} \subset O(2n+2)$ be a rank one subgroup. Then the $2n$-dimensional space $S^{2n+1} / \Gamma$ admits a $T^n$-action. In particular, if $\Gamma$ acts freely, and is therefore the diagonal circle, then we obtain a $T^n$-action on $\ccc P^n$.\end{example}

We see that many spaces of maximal symmetry rank can be obtained in the same way as the Riemannian examples, that is, by taking quotients of spheres. We obtain non-Riemannian spaces  
by allowing the group action to have isotropy. Theorem D shows that all such spaces arise in this way.

\begin{thmd}\label{t:newmsr} Let $X$ be an $n$-dimensional, compact,
 Alexandrov space with $\curv \geq 1$ admitting an isometric, (almost) effective $T^k$-action. Then $k\leq \lfloor \frac{n+1}{2} \rfloor$, and in the case of equality, either
\begin{enumerate}
\item $X$ is a spherical orbifold, homeomorphic to $S^n / H$, where $H$ is a finite subgroup of the centralizer of the maximal torus in $O(n+1)$, or
\item only in the case that $n$ is even, $X \cong S^{n+1}/H$, where $H$ is a rank one subgroup of the maximal torus in $O(n+2)$.
\end{enumerate}
In both cases the action on $X$ is equivalent to the induced action of the maximal torus on the $H$-quotient of the corresponding sphere. 
\end{thmd}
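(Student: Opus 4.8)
The inequality $k\le\lfloor\frac{n+1}{2}\rfloor$ is exactly Theorem \ref{t:upperbound}, so the plan is to treat the equality case by induction on $n$, the anchor being the action of $T^1$ on $S^1=S^1$ in dimension one. Suppose then that $k=\lfloor\frac{n+1}{2}\rfloor$. The crucial input is the second assertion of Theorem \ref{t:upperbound}: there is a circle $T^1\subset T^k$ with $\codim(\Fix(X;T^1))=2$. Since $\dim(X/T^1)=n-1$ for a non-trivial circle action, this says exactly that $\dim\Fix(X;T^1)=\dim(X/T^1)-1$, i.e. $T^1$ acts \emph{fixed-point homogeneously} on $X$. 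This is what couples the maximal symmetry rank hypothesis to the structural results of Section \ref{s:fph}.

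First I would feed this circle into Theorem \ref{t:expandedC}, obtaining a $T^1$-equivariant homeomorphism $X\cong(\nu*T^1)/(T^1)_p$, where $\bar p$ is the soul of the positively curved quotient $X/T^1$ (Soul Theorem \ref{t:soulthm}), $G(p)$ is the soul orbit, and $\nu$ is the normal space of directions to $G(p)$ at $p$; since $T^1$ is abelian its principal isotropy is normal and Corollary \ref{c:fph} recasts this as a join of two positively curved spaces. Splitting off a complement $T^k=T^1\times T^{k-1}$, the torus $T^{k-1}$ commutes with $T^1$, hence preserves $\Fix(X;T^1)$ and the distance to it, and so acts on $\nu$ while fixing the unique soul orbit. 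The argument now branches according to that orbit, which for a circle is either a genuine circle ($(T^1)_p$ finite) or a single fixed point ($(T^1)_p=T^1$). A short dimension count, using that an isotropy group acts almost effectively on the relevant space of directions \cite{GGG}, shows that in the first case $T^{k-1}$ acts with maximal symmetry rank on the $(n-2)$-dimensional space $\nu$, while in the second case $T^{k-1}$ also fixes $p$, so the full $T^k$ acts almost effectively with maximal symmetry rank on $\Sigma_pX=\nu$, now of dimension $n-1$; this latter configuration forces $n$ to be even, since otherwise the rank $\lfloor\frac{n+1}{2}\rfloor$ would exceed the maximal value $\lfloor\frac{n}{2}\rfloor$ permitted in dimension $n-1$.

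In either branch the inductive hypothesis classifies the smaller space as a sphere quotient, and I would then reassemble $X$ from the join description using the elementary identities $S^a*S^b=S^{a+b+1}$ and $(S^a/A)*(S^b/B)=S^{a+b+1}/(A\times B)$. When the soul orbit is a circle one finds $\nu*T^1=S^{n-2}*S^1=S^n$ up to the accumulated finite isotropy, and the residual finite group $(T^1)_p$ (acting by rotation on the $T^1$ factor and through the isotropy on $\nu$) is absorbed into the quotient, yielding $X\cong S^n/H$ with $H$ finite, which is case (1). When the soul orbit is a point one instead obtains $\nu*T^1=S^{n-1}*S^1=S^{n+1}$, and now the full circle $(T^1)_p=T^1$ enters the quotient, giving $X\cong S^{n+1}/H$ with $H$ of rank one; this is case (2), and by the parity remark above it occurs only for $n$ even. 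Throughout, the homeomorphisms supplied by Theorems \ref{t:expandedC} and \ref{l:sliceinf} are equivariant, so the resulting $T^k$-action is the one induced by the maximal torus on the sphere quotient, as claimed; the boundary and non-orientable phenomena (e.g. $D^n=S^n/\zzz_2$ and $\rrr P^{2n}=S^{2n}/\zzz_2$) appear directly among these quotients and require no treatment beyond the reductions already made for the bound in Theorem \ref{t:upperbound}.

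The step I expect to be the main obstacle is the precise bookkeeping of the quotient group $H$ and the verification of its stated form: one must check that at each inductive stage $H$ is built inside the centralizer of the acting torus and that at most one circle factor is ever introduced, so that $H$ stays finite in case (1) and of rank one in case (2). The mechanism for this is Lemma \ref{l:norm=cent}: since the $T^k$-action descends to the quotient sphere, $T^k$ normalizes $H$, and as $H$ is virtually abelian we may upgrade $T^k\subset N(H)$ to $T^k\subset Z(H)$, equivalently $H\subset Z(T^k)$, placing $H$ in the centralizer of the maximal torus of $O(n+1)$ in case (1) and realizing it as a rank one subgroup of the maximal torus of $O(n+2)$ in case (2). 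Making the join identifications compatible with the \emph{linear} torus actions on the spheres, and confirming that the two cases are exhaustive and correctly separated by parity, is where the bulk of the careful work lies.
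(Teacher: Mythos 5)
Your proposal follows essentially the same route as the paper's own proof: Theorem \ref{t:upperbound} supplies a circle in $T^k$ acting fixed-point homogeneously, Theorem \ref{t:expandedC} converts this into the join decomposition $X\cong(\nu * S^1)/T^1_p$, and the classification is obtained by iterating until the normal space is $S^0$ or $S^1$, with the two cases of the statement separated, exactly as you describe, by whether a rank one isotropy group is ever absorbed into $H$. The branching on whether the soul orbit is a circle or a point, the parity argument confining case (2) to even $n$, and the role of Lemma \ref{l:norm=cent} in placing $H$ in the centralizer all match the paper's treatment.

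The one place where you are too quick is your final claim that the boundary and non-orientability phenomena ``require no treatment beyond the reductions already made'' for the rank bound. Those reductions (passing to the ramified orientable double cover via Lemma \ref{l:lift}, or to the soul isotropy when $\d X\neq\emptyset$) let one \emph{assume} $X$ closed and orientable while proving the inequality, but for the classification one must descend back, and the paper does this explicitly: if $X$ is non-orientable, the orientable case gives $\tilde{X}_{\txt{Ram}}\cong S^m/H'$, and one then argues that the orbit space is a suspended simplex, hence $m=n$ is even and $H'$ is finite, and that the orientation-reversing deck involution lifts to the sphere as a torus element composed with a reflection. This last step is precisely what places the enlarged group $H$ inside the \emph{centralizer} of the maximal torus rather than the torus itself, and it is where examples such as $\rrr P^{2n}$ and $\rrr P^2 * S^1\cong S^4/\zzz_2$ actually come from; a parallel short argument is needed when $X$ has boundary, where $X$ is recovered as the quotient by a reflection of a suspension-type example. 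These steps are brief but not free, and your proof should include them.
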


\begin{proof}
The bound on the rank has already been shown. Let us assume that $X^n$ is closed and orientable, and that $T^k$ acts with maximal rank on $X$. Since 
Alexandrov spaces of dimension less than or equal to two are topological manifolds, we will assume that $n\geq 3$. By Theorem \ref{t:upperbound}, there is some $T^1 \subset T^k$ which acts with a fixed point set of codimension 2. Let $F$ be a codimension two, connected component of that fixed point set. Then there is a unique orbit, $T^1 (p)$, at maximal distance from $F$. Let $\nu_p$ be the normal space at the point $p$ to this orbit. In $X / T^1$, this orbit becomes a point fixed by $T^k / T^1$. Either $T^1_p = T^1$, in which case $p$ is fixed by the entire torus, or $T^1_p$ is finite, in which case $T^k(p)$ is a circle orbit. In either case, the isotropy action of $T^k_p$ on $\nu_p$ is once again an action of maximal symmetry rank upon a closed orientable positively curved Alexandrov space.

Now by Theorem C, we may write $X \cong (\nu * S^1)/T^1_p$. We proceed inductively, until $\nu$ is given by $S^1$ or $S^0$, to see that $X$ is homeomorphic to the quotient of some sphere $S^m$ by a subgroup $H$ of a linearly acting torus, and that the torus action on $X$ is induced by the action of the torus on $S^m$. It is clear from considering the dimension of the space and the rank of the torus that either $m=n$ and $\rk (H) = 0$, or, only in the case where $n$ is even, $m=n+1$ and $\rk (H) = 1$. 
Further, one easily sees 
that the orbit space $X/T^k$, stratified by isotropy type, is either a simplex or a suspended simplex.

In the case that $X^n$ is not orientable, we have $X = (S^m/H)/\zzz_2$, where $\zzz_2$ reverses orientation. It follows that $(S^m/H)/T^k$ is a suspended simplex, and $\dim (X) = m = n$ is even with $H$ finite. The action of $\zzz_2$ can then be lifted to the sphere, where it is the composition of an element of the torus with a reflection. In the case that $X$ has boundary, the isotropy action at the soul determines the action on $X$. $X$ is the cone on an odd-dimensional maximal symmetry rank space, or, equivalently, the quotient by a reflection of the suspension of an odd-dimensional maximal symmetry rank space. 
\end{proof}

It follows that all odd-dimensional maximal symmetry rank spaces are orientable. Note that this result is sharp. For example, $\Sigma(\rrr P^{2n})$ admits an action of almost maximal symmetry rank, that is, of rank $\lfloor \frac{n-1}{2} \rfloor$.
We can see from the inductive nature of the proof that, if the space is odd-dimensional, it is 
built up by an iterated process, joining a circle to a space of lower dimension and then taking a quotient by a finite group.  That is, one can 
write the space as 
\bdm
(\cdots(S^1/\Gamma_1 * S^1)/\Gamma_2 *   \cdots * S^1)/\Gamma_k,
\edm
where $\Gamma_i$ are finite cyclic subgroups of $T^k$, $1\leq i \leq k$. 
If it is even-dimensional and orientable, then it is a suspension of an odd-dimensional example, or it is the quotient of an odd-dimensional example by a circle. If it is not orientable, or has boundary, then it is a quotient by an involution on an example of suspension type which interchanges the poles of the suspension.

This view of the proof allows us to show the following result on the fundamental groups and Euler characteristics of these spaces.

\begin{proposition}Let $X^n$ be a positively curved Alexandrov space of maximal symmetry rank. Then, if $n$ is odd, $X$ has finite cyclic fundamental group and $\chi (X) = 0$.

If $n$ is even, then $X$ is contractible if it has boundary, is simply connected if it is orientable or locally non-orientable, and has fundamental group of order two if it is locally orientable but not globally orientable.
If $X$ is closed and orientable, then
$\chi(X) = 2$ or $n+1$,  and $\chi(X) =1$ otherwise.\end{proposition}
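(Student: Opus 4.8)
The plan is to read everything off the explicit structure obtained in the proof of Theorem~D, treating the odd- and even-dimensional cases separately, and to compute all Euler characteristics uniformly through the fixed-point set of the full torus. Two standard homological inputs will be used throughout, both of which apply because a positively curved Alexandrov space is compact (Generalized Bonnet--Myers) and a finite-dimensional MCS space with finitely generated rational cohomology. First, for any torus $T$ acting on such a space one has $\chi(X)=\chi(X^{T})$ (orbits in the complement of the fixed set have vanishing Euler characteristic, and one iterates over a chain of circles). Second, for a finite group $\Gamma$ acting on a compact space the Lefschetz/transfer identity $\chi(X/\Gamma)=\tfrac{1}{|\Gamma|}\sum_{g\in\Gamma}\chi(X^{g})$ holds with rational coefficients.

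First I would dispose of the odd-dimensional case. By the proof of Theorem~D the space is orientable and may be written as $(W * S^1)/\Gamma$, where $W$ is the preceding stage of the iterated join construction and $\Gamma$ is a finite \emph{cyclic} subgroup of $T^k$. Since the join of two non-empty path-connected spaces is simply connected (van Kampen), $W*S^1$ is simply connected; hence by Armstrong's theorem on fundamental groups of orbit spaces, $\pi_1(X)$ is the quotient of $\Gamma$ by the subgroup generated by those elements acting with fixed points, and a quotient of a finite cyclic group is finite cyclic. For the Euler characteristic I use $\chi(X)=\chi(X^{T^k})$; in the proof of Theorem~\ref{t:upperbound} it was shown that an effective maximal-rank torus action in odd dimensions has empty fixed-point set, so $X^{T^k}=\emptyset$ and $\chi(X)=0$. (Equivalently, writing $X\cong S^{2k-1}/H$ and applying the Lefschetz identity, every $(S^{2k-1})^{g}$ is an odd-dimensional sphere or empty, hence of Euler characteristic $0$.)

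For the even-dimensional case, if $X$ has boundary the proof of Theorem~D identifies $X$ with the cone on an odd-dimensional maximal-symmetry-rank space, so $X$ is contractible; in particular it is simply connected and $\chi(X)=1$. If $X$ is closed, the fundamental-group statements are exactly the Generalized Synge's Theorem~\ref{t:GST}: $X$ is simply connected when it is orientable or locally non-orientable, and $\pi_1(X)\cong\zzz_2$ when it is locally orientable but not globally orientable. For the Euler characteristic I again use $\chi(X)=\chi(X^{T^k})$ and count the (isolated) torus-fixed points via the identification $X\cong S^m/H$ of Theorem~D. In the suspension/sphere case the maximal torus fixes exactly the two suspension poles, giving $\chi(X)=2$; in the projective case $X$ is a quotient of a weighted complex projective space and the residual torus fixes the $\tfrac n2+1$ vertices of the associated simplex, giving $\chi(X)=\tfrac n2+1$. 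In every closed non-orientable (or boundary) case, $X$ arises as the quotient of a suspension-type example by a pole-interchanging involution, so the two poles collapse to one torus-fixed point and $\chi(X)=1$, consistent with the contractible boundary case.

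The routine points are the two homological inputs noted above. The step that requires the most care is the fundamental-group computation in the odd case: one must verify that Armstrong's theorem is applicable, i.e.\ that the finite cyclic group acts admissibly (in the sense of an action on a locally contractible, triangulable space, which the MCS structure supplies), and that the \emph{outermost} group in the iterated join is genuinely cyclic so that its quotient remains cyclic. The even case presents no real obstacle: the fundamental groups are immediate from the Generalized Synge's Theorem and contractibility in the boundary case, while the Euler characteristics reduce to a finite count of torus fixed points in each of the two structural types furnished by Theorem~D.
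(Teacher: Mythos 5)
Your argument is essentially the paper's: Armstrong's theorem applied to the iterated join description for the odd-dimensional fundamental group, the Generalized Synge's Theorem (plus contractibility of the cone) for the even-dimensional one, and the identity $\chi(X)=\chi(\Fix(X;T^1))$, iterated over the torus, for the Euler characteristics --- the paper justifies that identity by viewing $X$ as the orbit space of an isometric action on a sphere and invoking triangulability of the stratified pieces, which is a slightly more careful version of your ``orbits off the fixed set contribute nothing'' remark. The one point to flag is your count in the projective case: you correctly find $\frac{n}{2}+1$ torus-fixed points for an $n$-(real)-dimensional quotient of $S^{n+1}$ by a rank-one subgroup (e.g.\ $\chi(\ccc P^2)=3$, not $5$), so your computation does not literally yield the stated value $n+1$; this appears to expose a notational slip in the proposition (the paper elsewhere writes $\ccc P^n_{a_0,\dots,a_n}$ for an $n$-real-dimensional space, conflating real and complex dimension), but you should say explicitly that you are proving $\chi(X)=\frac{n}{2}+1$ and reconcile it with the statement rather than passing over the mismatch in silence.
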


\begin{proof}
The odd-dimensional spaces are spherical orbifolds, and we can see that the finite cyclic group $\Gamma_k$ given in the iterative description of the space above is the only group which might act without fixing points in the sphere, so by \cite{Ar} the result follows. The other cases are either trivial, or consequences of the Generalized Synge's Theorem \ref{t:GST}.

Because $X$ is homeomorphic to the orbit space of an isometric action on a Riemannian manifold, the fact that $\chi(X) = \chi(\Fix(X;T^1))$ for any $T^1$ in the torus now follows just as in the Riemannian case \cite{Ko}, noting that the subspaces to which the Lefschetz Fixed Point Theorem are applied are all triangulable by \cite{Jo}. The result is then true by induction.
\end{proof}

It is clear that a positively curved Riemannian orbifold is an example of a positively curved Alexandrov space.  Since all spaces in case (1) of Theorem D are spherical orbifolds, to classify the positively curved orbifolds with maximal symmetry rank, it suffices to understand when the spaces in case (2) of Theorem D produce orbifolds. Let $H \subset T^{k+1} \subset O(2k+2)$ be a connected rank one subgroup, and write $T^{k+1}$ as a product of circles $T_1 \times T_2 \times \cdots \times T_{k+1}$, each acting fixed-point homogeneously on the sphere. If $H=T_i$ for some $i$, then $X$ is not closed. If $H\subset T_{i_1} \times T_{i_2} \times \cdots \times T_{i_{l+1}}$ for pairwise different indices $i_j$ then $X$ is a join of some weighted complex projective space of complex dimension $l$ with a sphere. This is only homeomorphic to an orbifold if $l=k$, in which case $X$ is a weighted complex projective space, or if $l=1$, in which case $X$ is homeomorphic to a sphere, since $\ccc P^1$ is $S^2$. Hence we obtain the following corollary.

\begin{CorE}Let $X$ be an $n$-dimensional, closed
 Riemannian orbifold with positive sectional curvature admitting an isometric, (almost) effective $T^k$-action. Then $k\leq \lfloor \frac{n+1}{2} \rfloor$ and in the case of equality either
\begin{enumerate}
\item $X$ is a spherical orbifold, homeomorphic to $S^n / \Gamma$, where $\Gamma$ is a finite subgroup of the centralizer of the maximal torus in $O(n+1)$ or
\item only in the case that $n$ is even, $X$ is homeomorphic to a finite quotient of a weighted complex projective space $\ccc P^n_{a_0,a_1,\cdots,a_n}/\Gamma$, where $\Gamma$ is a finite subgroup of the linearly acting torus.
\end{enumerate}
\end{CorE}


\begin{thebibliography}{99}

\bibitem{AKP} S. Alexander, V. Kapovitch, A. Petrunin, \emph{Alexandrov geometry}, a draft
available at \url{www.math.psu.edu/petrunin}.

\bibitem{Ar} M. A. Armstrong, \emph{The fundamental group of the orbit space of a discontinuous group}, Proc. Cambridge Philos. Soc. \textbf{64} (1968) 299--301.

 \bibitem{Be} V. N. Berestovski{\u\i},  \emph{Homogeneous manifolds with an intrinsic metric II}, (Russian)
Sibirsk. Mat. Zh. \textbf{30} (1989), 14--28; translation in Siberian Math. J. \textbf{30} (1989), 180--191.
 
\bibitem{BP} V. N. Berestovski{\u\i}, C. Plaut, \emph{Uniform universal covers of uniform spaces}, Topology and its Applications \textbf{154} (2007), 1748--1777.

\bibitem{Berger} M. Berger, \emph{Trois remarques sur les vari\'et\'es riemanniennes \`a courbure positive}, (French) C. R. Acad. Sci. Paris S\'er. A-B, \textbf{263} (1966) A76--A78.

\bibitem {Br} G. Bredon, \emph{Introduction to compact transformation groups}, Pure and Applied Mathematics, Vol. 46. Academic Press, New York-London, 1972. 

\bibitem{BBI} D. Burago, Y. Burago and S. Ivanov, \emph{A Course in Metric Geometry}, Graduate Studies in Mathematics, 33. American Mathematical Society, Providence, RI, 2001.

\bibitem{BGP} Y. Burago, M. Gromov, G. Perelman, \emph{A.D. Alexandrov spaces with curvature bounded below} (Russian) Uspekhi Mat.
Nauk \textbf{47} (1992), 3--51, 222; translation in Russian
Math. Surveys \textbf{47} (1992), 1--58.


\bibitem{FY} K. Fukaya and T. Yamaguchi,  \emph{Isometry groups of singular spaces}, Math. Z. \textbf{216} (1994), 31--44.

\bibitem{GG2} F. Galaz-Garc\'ia, \emph{A note on maximal symmetry rank, quasipositive curvature, and low dimensional manifolds}, arXiv:1201.1312 [math.DG] (2012).

\bibitem{GG} F. Galaz-Garc\'ia, \emph{Simply connected Alexandrov 4-manifolds with positive or nonnegative curvature and torus actions}, arXiv:1208.3041v2 [math.DG] (2012).

\bibitem{GGG} F. Galaz-Garc\'ia and L. Guijarro, \emph{Isometry groups of Alexandrov spaces},  Bull. Lond. Math. Soc. \textbf{45} (2013), 567--579.

\bibitem{GGG2} F. Galaz-Garc\'ia and L. Guijarro, \emph{On 3-dimensional Alexandrov spaces}, arXiv:1307.3929 [math.DG] (2013).

\bibitem{GGS} F. Galaz-Garc\'ia and C. Searle, \emph{Cohomogeneity one Alexandrov spaces}, Transform. Groups \textbf{16} (2011), 91--107.

\bibitem{GP} K. Grove and P. Petersen, \emph{A radius sphere theorem}, Invent. Math. \textbf{112} (1993), 577-583.

\bibitem{GS} K. Grove and C. Searle, \emph{Positively curved manifolds with maximal symmetry-rank}, J. Pure  Appl. Algebra \textbf{91} (1994), 137-142. 

\bibitem{GS2} K. Grove and C. Searle, \emph{Differential topological restrictions by curvature and symmetry}, J. Differential Geometry \textbf{47} (1997), 530--559; \textbf{49} (1998), 205.

\bibitem{GWannals} K. Grove and F. Wilhelm, \emph{Hard and soft packing radius theorems}, Ann. of Math. (2) \textbf{142} (1995), 213--237.

\bibitem{GW1} K. Grove and F. Wilhelm, \emph{Metric constraints on exotic spheres via Alexandrov geometry}, J. Reine Angew. Math. \textbf{487} (1997), 201--217.

\bibitem{GW2}K. Grove and B. Wilking, \emph{A knot characterization and $1$-connected nonnegatively curved $4$-manifolds with circle symmetry},  arXiv:1304.4827 (2013).

\bibitem{HS}J. Harvey and C. Searle, \emph{$4$-dimensional positively curved Alexandrov spaces with circle symmetry}, in preparation.

\bibitem{Hatcher} A. Hatcher, \emph{Algebraic topology}. Cambridge University Press, Cambridge, 2002.

\bibitem{Jo} F. E. A. Johnson, \emph{On the triangulation of stratified sets and singular varieties}, 
Trans. Amer. Math. Soc. {\bf 275} (1983), 333--343. 

\bibitem{K2} V. Kapovitch, \emph{Regularity of limits of noncollapsing sequences of manifolds}, Geom. Funct. Anal. \textbf{12} (2002), 121--137.

\bibitem{K} V. Kapovitch, \emph{Perelman's stability theorem},  Surveys in Differential Geometry,  Vol. 
XI. Int. Press, Somerville, MA,  2007, pp. 103--136.

\bibitem{Ko} S. Kobayashi, \emph{Transformation groups in Differential Geometry}. Reprint of the 1972 edition, Classics in Mathematics, Springer-Verlag, Berlin, 1995.

\bibitem{Kwun} K. W. Kwun, \emph{Scarcity of orientation-reversing {${\rm PL}$} involutions of lens spaces}, Michigan Math. J. \textbf{17} (1970), 355-358..

\bibitem{Li} G. R. Livesay, \emph{Involutions with two fixed points on the three-sphere}, Ann. of Math. (2) \textbf{78} (1963), 582--593.

\bibitem{Lyt} A. Lytchak, \emph{Allgemeine Theorie der Submetrien und verwandte mathematische Probleme}, (German)  Bonner Mathematische Schriften \textbf{347}. Universit\"{a}t Bonn, Mathematisches Institut, Bonn, 2002. 

\bibitem{MY57} D. Montgomery and C. T. Yang, \emph{The existence of a slice}, Ann. of Math. (2) \textbf{65} (1957), 108--116.

\bibitem{MY} D. Montgomery and C. T. Yang, \emph{Differentiable transformation groups on homotopy spheres}, Michigan Math. J. {\bf 14} (1967), 33--46.

\bibitem{MySt} S. B. Myers and N. Steenrod, \emph{The group of isometries of a Riemannian manifold},  Ann. of Math. (2)  \textbf{40} (1939),  400--416.

\bibitem{NR} W. D. Neumann and F. Raymond, \emph{Seifert manifolds, plumbing, {$\mu $}-invariant and orientation reversing maps}, Algebraic and geometric topology ({P}roc. {S}ympos., {U}niv.
              {C}alifornia, {S}anta {B}arbara, {C}alif., 1977), Lecture Notes in Math., 664. Springer, Berlin, 1978, pp. 163--196.

\bibitem{Pa} R. S. Palais, \emph{The classification of {$G$}-spaces}, Mem. Amer. Math. Soc. No. 36, (1960).

\bibitem{P} G. Perelman, \emph{Alexandrov's spaces with curvatures bounded from below II},  preprint (1991), available at \url{http://www.math.psu.edu/petrunin/papers/alexandrov/perelmanASWCBFB2+.pdf}.
 
\bibitem{P4}  G. Perelman, \emph{Elements of Morse theory on Aleksandrov spaces}, (Russian) Algebra i Analiz \textbf{5} (1993), 232--241; translation in St. Petersburg Math. J. \textbf{5} (1994), 205--213. 

\bibitem{P1} G. Perelman, \emph{The entropy formula for the Ricci flow and its geometric applications}, arXiv:math/0211159 [math.DG] (2002).
 
\bibitem{P2} G. Perelman, \emph{Ricci flow with surgery on three-manifolds}, arXiv:math/0303109 [math.DG] (2003).
 
\bibitem{P3} G. Perelman, \emph{Finite extinction time for the solutions to the Ricci flow on certain three-manifolds}, arXiv:math/0307245 [math.DG] (2003).
 

\bibitem{PP1} G. Perelman and A. Petrunin, \emph{Extremal subsets in
Aleksandrov spaces and the generalized Lieberman theorem}, (Russian) Algebra i Analiz \textbf{5} (1993), 242--256; translation in St. Petersburg  Math. J. \textbf{5} (1994), 215--227.

\bibitem {PP2} G. Perelman and A. Petrunin, \emph{Quasigeodesics and gradient curves in Alexandrov spaces}, preprint (1995), available at \url{http://www.math.psu.edu/petrunin/papers/qg_ams.pdf}.

\bibitem{Pet1}  A. Petrunin, \emph{Parallel transportation for Alexandrov spaces with curvature bounded below}, Geom. Funct. Anal. \textbf{8} (1998), 123--148.

 
\bibitem{Pet3} A. Petrunin, \emph{Semiconcave functions in Alexandrov geometry}, Surveys in Differential Geometry,  Vol. 
XI. Int. Press, Somerville, MA, 2007, pp. 131--201. 

\bibitem{Petrunin} A. Petrunin, \emph{A globalization for non-complete but geodesic spaces}, arXiv:math/1208.3155v1 [math.DG] (2012).

\bibitem{PMO} A. Petrunin, \emph{$3$-dim positively curved Alexandrov space}, \url{http://mathoverflow.net/questions/105795} (2012).
 
 \bibitem{Pl} C. Plaut, \emph{Metric spaces of curvature $\geq k$}, Handbook of
geometric topology. North-Holland, Amsterdam, 2002, pp. 819--898.
 


 \bibitem{Sh} K. Shiohama, \emph{An introduction to the geometry of Alexandrov
spaces}, Lecture Notes Series \textbf{8}. Seoul National University,
Research Institute of Mathematics, Global Analysis Research
Center, Seoul, 1993.

\bibitem{Si} L. C. Siebenmann, \emph{Deformation of homeomorphisms on stratified sets, I, II}, Comment. Math. Helv. \textbf{47} (1972), 123--163.

\bibitem{Sm} P. A. Smith, \emph{Transformations of finite period}, Ann. of Math. (2) \textbf{39} (1938), 127--164.
 
 \bibitem{Smith} P. A. Smith, \emph{Permutable periodic transformations}, Proc. Nat. Acad. Sci. U.S.A. {\bf 30} (1944), 105--108. 

\bibitem{Sugahara} K. Sugahara, \emph{On the diameter of compact homogeneous {R}iemannian manifolds}, Publ. Res. Inst. Math. Sci. \textbf{16} (1980) 835--847.
 
 \bibitem{W} A. W\"{o}rner, \emph{Boundary Strata of nonnegatively curved Alexandrov Spaces and a Splitting Theorem}. PhD thesis, Westf\"{a}lischen Wilhelms-Universit\"{a}t M\"{u}nster, 2010.
 




\end{thebibliography}
\end{document}